\author[LQ]{Lizhen Qin}
\address{Department of Mathematics\\
Nanjing University \\\newline
22 Hankou Road\\Nanjing\\Jiangsu 210093\\P.R.China}
\email{qinlz@nju.edu.cn}
\author[BW]{Botong Wang}
\address{Department of Mathematics\\University of Wisconsin-Madison\\\newline 
480 Lincoln Drive\\Madison\\ WI 53706\\ USA}
\email{bwang274@wisc.edu}
\theoremstyle{definition}
\newtheorem{theorem}{Theorem}[section]
\newtheorem{prop}[theorem]{Proposition}
\newtheorem{lemma}[theorem]{Lemma}
\theoremstyle{definition}
\newtheorem{defn}[theorem]{Definition}
\newtheorem{rmk}[theorem]{Remark}
\newtheorem{ex}[theorem]{Example}
\newtheorem*{ex*}{Example}
\newtheorem*{question}{Question}
\numberwithin{equation}{section}
\DeclareMathOperator{\Char}{Char}                
\DeclareMathOperator{\homo}{Hom}
\DeclareMathOperator{\tr}{tr}
\DeclareMathOperator{\Id}{Id}
\DeclareMathOperator{\GL}{GL}
\DeclareMathOperator{\SL}{SL}
\DeclareMathOperator{\Sp}{Sp}
\DeclareMathOperator{\re}{Re}
\DeclareMathOperator{\kn}{Ker}
\DeclareMathOperator{\ckn}{Coker}
\DeclareMathOperator{\im}{Im}
\DeclareMathOperator{\Int}{Int}
\DeclareMathOperator{\PD}{PD}
\title[Complex and Symplectic Manifolds that are Nonk\"{a}hler]{A Family of Compact Complex and Symplectic Calabi-Yau Manifolds that are Nonk\"{a}hler}
\begin{document}

\begin{abstract}
We construct a family of $6$-dimensional compact manifolds $M(A)$ which are simultaneously diffeomorphic to complex Calabi-Yau manifolds and symplectic Calabi-Yau manifolds. They have fundamental groups $\mathbb{Z} \oplus \mathbb{Z}$, their odd-degree Betti numbers are even, they satisfy the hard Lefschetz property, and their real homotopy types are formal. However, $M(A) \times Y$ are not homotopy equivalent to any compact K\"{a}hler manifold for any topological space $Y$. The main ingredient to show the nonk\"ahlerness is a structure theorem of cohomology jump loci due to the second author.
\end{abstract}

\maketitle

\section{Introduction}
A K\"ahler manifold is a symplectic manifold together with a compatible complex structure. People have been interested in searching examples of compact complex or compact symplectic manifolds which are nonk\"{a}hler. In the 1940s, Hopf constructed complex manifolds $S^{1} \times S^{2N-1}$ ($N>1$) that are nonk\"{a}hler (\cite[p.\ 14]{morrow_kodaira}) because their second Betti numbers vanish. By similar reasons, it is easy to construct many compact complex nonk\"{a}hler manifolds. On the other hand, it is more difficult and hence more interesting to find the symplectic counterparts. The existence of compact symplectic nonk\"{a}hler manifolds had been an open question for many years until Thurston discovered the first example in \cite{thurston}. From then on, there has been many works on the compact symplectic nonk\"ahler manifolds (e.g. \cite{babenko_taimanov}, \cite{cordero_fernandez_gray}, \cite{fernandez_leon_saralegui}, \cite{Fernandez_Mumoz2}, \cite{gompf}, \cite{mcduff}, \cite{park}, and \cite{tralle_oprea}). Motivated by physics, it is particularly interesting to construct complex or symplectic nonk\"{a}hler manifolds satisfying the Calabi-Yau property (e.g. \cite{akhmedov}, \cite{baldridge_kirk}, \cite{bogomolov}, \cite{fine_panov1}, \cite{fine_panov2}, \cite{friedman}, \cite{goldstein_prokushkin}, \cite{grantcharov}, \cite{guan1}, \cite{guan2}, \cite{gutowski_ivanov_papadopoulos}, \cite{lu_tian}, \cite{smith_thomas_yau}, \cite{torres_yazinski}, and \cite{tseng_yau}).

In this paper, we will construct a family of $6$-dimensional compact smooth manifolds which are simultaneously diffeomorphic to complex and symplectic manifolds. They are Calabi-Yau with respect to both the complex structure and the symplectic structure.  We call these manifolds complex and symplectic Calabi-Yau. Notice that our notion of ``complex and symplectic" is different from the notion ``holomorphic symplectic" (see \cite{guan1}). A holomorphic symplectic manifold is a complex manifold endowed with a holomorphic symplectic form. A holomorphic symplectic manifold certainly carries both complex and symplectic structures and is Calabi-Yau in either sense. However, such manifolds are always of real dimension $4n$, and hence never of dimension 6.

The examples in this paper are interesting to us because they share many properties with compact K\"{a}hler manifolds. However, they are nonk\"ahler in a strong sense. Before stating the main result, we want to make the notion ``nonk\"ahler" precise. For a manifold $M$, we can interpret nonk\"{a}hlerness in one of the following ways.
\begin{quote}
The manifold $M$ is (i) not homotopy equivalent (ii) not homeomorphic (iii) not diffeomorphic (iv) not biholomorphic (v) not symplectomorphic to any compact K\"{a}hler manifold.
\end{quote}
Here, (iv) also means that $M$ does not carry a K\"{a}hler structure compatible with its complex structure, and (v) also means that $M$ does not carry a K\"{a}hler structure compatible with its symplectic structure. Clearly, (i)$\Rightarrow$(ii)$\Rightarrow$(iii)$\Rightarrow$(iv) and (v). The nonk\"{a}hlerness of our examples is in a sense even stronger than (i).

Let us focus on the nonk\"{a}hlerness in the sense of (i) for now. Many works to prove a manifold being nonk\"ahler in the sense of (i) are to show the manifold does not satisfy one of the following properties.
\begin{enumerate}[\hspace{0.6cm}(a)]
\item Fundamental Groups: Only a small class of groups, named K\"{a}hler groups, can be realized as the fundamental groups of compact K\"{a}hler manifolds. (See e.g. \cite{A_B_C_K_T} and \cite{burger} for a survey.)

\item Betti Numbers: The odd-degree Betti numbers of compact K\"{a}hler manifolds are even (\cite[p.\ 117]{griffiths_harris}).

\item Hard Lefschetz Property: For a compact K\"{a}hler manifold $X$ of real dimension $2n$, there exists $\alpha \in H^{2}(X; \mathbb{R})$ such that, for each $j$,
\[
L_{\alpha}^{j}: H^{n-j}(X; \mathbb{R}) \overset{\alpha^{j} \cup}{\longrightarrow} H^{n+j}(X; \mathbb{R}),
\]
is an isomorphism of cohomology groups. In fact, one can choose $\alpha$ to be the cohomology class represented by the K\"{a}hler form of $X$ (\cite[p.\ 122]{griffiths_harris}).

\item Formality: The real homotopy type of a compact K\"{a}hler manifold is formal (\cite{D_G_M_S}).
\end{enumerate}

Now let us state our main theorem.
\begin{theorem}\label{theorem_complex-symplectic}
For each $A \in \SL(2, \mathbb{Z}[\sqrt{-1}])$ such that the absolute value of its trace is $|\tr (A)| > 2$, there exists a $6$-dimensional compact smooth manifold $M(A)$ (constructed in Proposition \ref{proposition_M(A)}) which satisfies the following properties.
\begin{enumerate}
\item The manifold $M(A)$ is simultaneously $C^{\infty}$ diffeomorphic to a complex manifold $M_{C}$ and a smooth symplectic manifold $M_{S}$.

\item The complex manifold $M_{C}$ is holomorphically Calabi-Yau, i.e. the canonical line bundle of $M_{C}$ is trivial as a holomorphic line bundle.

\item The symplectic manifold $M_{S}$ is integrally Calabi-Yau, i.e. $c_{1} (M_{S}) = 0$. Here $c_{1} (M_{S}) \in H^{2}(M_{S}; \mathbb{Z})$ is the first Chern class of $M_{S}$ with respect to its symplectic structure.

\item The fundamental group $\pi_{1} (M(A)) \cong \mathbb{Z} \oplus \mathbb{Z}$. Hence it is a K\"{a}hler group.

\item All odd-degree Betti numbers of $M(A)$ are even.

\item The following map is an isomorphism for each $j$,
\[
L_{[\omega]}^{j}: H^{3-j}(M_{S}; \mathbb{R}) \overset{[\omega]^{j} \cup}{\longrightarrow} H^{3+j}(M_{S}; \mathbb{R}),
\]
where $[\omega] \in H^{2}(M_{S}; \mathbb{R})$ is the cohomology class represented by the symplectic form $\omega$ of $M_{S}$, and $L_{[\omega]}^{j} (\beta) = [\omega]^{j} \cup \beta$. Hence $M(A)$ satisfies the hard Lefschetz property.

\item The real homotopy type of $M(A)$ is formal.

\item However, given any topological space $Y$, the product $M(A) \times Y$ is not homotopy equivalent to any compact K\"{a}hler manifold.
\end{enumerate}
\end{theorem}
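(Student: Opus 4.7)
The plan is to invoke the structure theorem for cohomology jump loci on compact K\"ahler manifolds (due to the second author) and exhibit a component of a jump locus of $M(A)\times Y$ that violates it. This refinement is necessary because properties (4)--(7) above show that the four classical obstructions (K\"ahler groups, parity of odd Betti numbers, hard Lefschetz, and formality) are all satisfied by $M(A)$, so none of them can be used to prove nonk\"ahlerness. Recall that the structure theorem asserts: for any compact K\"ahler manifold $X$, every irreducible component of
\[
\Sigma^i_k(X) \;=\; \{\rho \in \homo(\pi_1(X),\bC^*) \mid \dim_\bC H^i(X, L_\rho) \geq k\}
\]
is a torsion translate of a connected algebraic subtorus of the character variety. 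Since $\Sigma^i_k$ depends only on the homotopy type, it suffices to produce a non-torsion-translated component of $\Sigma^i_k(M(A)\times Y)$.

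The first substantive step is to identify such a bad component on $M(A)$ itself. The character variety is $\homo(\bZ \oplus \bZ, \bC^*) = (\bC^*)^2$. Writing $M(A)$ as a bundle with monodromy determined by $A$ (as suggested by the construction), a Leray-type spectral sequence computation of $H^*(M(A), L_\rho)$ for rank-one local systems $L_\rho$ pulled back from the base should yield components of $\Sigma^i_k(M(A))$ of the form $\{\rho \in (\bC^*)^2 : \rho(a) = \lambda\}$, where $a$ is a suitable generator of $\pi_1(M(A))$ and $\lambda$ is an eigenvalue of the monodromy action of $A$ on an appropriate cohomology group of the fibre. The hypothesis $|\tr(A)|>2$ together with $\det A = 1$ forces the eigenvalues $\lambda, \lambda^{-1}$ of $A$ to lie off the unit circle; in particular, neither is a root of unity. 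Consequently, the candidate component is a non-torsion translate of a $1$-dimensional subtorus of $(\bC^*)^2$.

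Finally, I will propagate this non-torsion-translated component to $W := M(A)\times Y$. The K\"unneth decomposition
\[
H^i(W, L_{\rho_1}\boxtimes L_{\rho_2}) \;=\; \bigoplus_{p+q=i} H^p(M(A), L_{\rho_1}) \otimes H^q(Y, L_{\rho_2})
\]
shows that the intersection of $\Sigma^i_k(W)$ with the algebraic subgroup $(\bC^*)^2 \times \{1\} \subset \homo(\pi_1(W),\bC^*)$, where $1$ denotes the trivial character on $\pi_1(Y)$, contains the bad component from the previous step (using $H^0(Y,\bC)\neq 0$). If $W$ were homotopy equivalent to a compact K\"ahler manifold, the structure theorem would force every irreducible component of $\Sigma^i_k(W)$ to be a torsion translate of a subtorus; intersecting any such torsion translate with the algebraic subtorus $(\bC^*)^2 \times \{1\}$ again produces only torsion translates, contradicting the non-torsion nature of the component detected above.

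The principal obstacle is the computation in the second paragraph: constructing the non-torsion-translated component of $\Sigma^\bullet_\bullet(M(A))$ and pinning down the translating character as an eigenvalue of $A$, so that the trace hypothesis rules out roots of unity. Once this detection is established, the K\"unneth slicing argument in the final paragraph is essentially formal, because intersections of torsion-translated subtori with any fixed algebraic subtorus remain torsion-translated.
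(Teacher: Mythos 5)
Your proposal addresses only item (8) of the theorem. Items (1)--(7) --- the construction of $M(A)$ as a Kummer-surface bundle over $S^1\times S^1$, the holomorphic and symplectic Calabi--Yau properties, the computation of $\pi_1$ and of the Betti numbers, the hard Lefschetz property, and formality --- are simply assumed (``properties (4)--(7) above show\dots''), whereas they constitute the bulk of the paper's argument (Sections 2--4) and are not routine: hard Lefschetz requires the diagonalizability of $A$ (which is where $|\tr(A)|>2$ enters) together with an explicit description of the cup product on $H^{*}(M(A);\mathbb{R})$ obtained from the Wang sequence, and formality is proved via the Fern\'andez--Mu\~noz $(n-1)$-formality criterion. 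As a proof of the stated theorem this is a substantial gap.

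For item (8) itself your strategy is essentially the paper's, repackaged: the paper first proves (Theorem \ref{theorem_monodromy}) that for a fiber bundle over $S^1$ whose total space is homotopy equivalent to a compact K\"ahler manifold, the monodromy eigenvalues on the cohomology of the fiber are roots of unity --- this is exactly your ``structure theorem plus slicing'' argument carried out once and for all --- and then applies it to $M(A)\times Y\to S^1$ with fiber $S^1\times K\times Y$. Two points in your sketch need repair. First, the step you yourself flag as ``the principal obstacle'' is genuinely the heart of the matter and is missing: one must compute the action of the induced automorphism $A_K$ of the Kummer surface on $H^{2}(K;\mathbb{C})$ and show it has an eigenvalue $|\lambda|^{2}>1$. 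The eigenvalues of $A$ itself are \emph{not} eigenvalues of the monodromy on cohomology; what appears is $|\lambda|^{\pm 2}$, $\lambda\bar\lambda^{-1}$, etc., coming from the induced action on $\wedge^{2}$ of the underlying real $4$-dimensional representation, and extracting this requires decomposing $H^{2}(K)$ into the span of the exceptional divisor classes and the part pulled back from the torus (Lemmas \ref{lemma_1_k_homology}--\ref{lemma_gA_eigenvalue}). Second, the bad locus does not have the shape $\{\rho:\rho(a)=\lambda\}$: since $H^{*}(S^1;L_{\rho})=0$ for $\rho\neq 1$, the K\"unneth formula forces $\bigcup_j\Sigma^{j}_{1}(M(A))$ to be a \emph{finite set of points} inside $\{1\}\times\mathbb{C}^{*}$, and the violation of the structure theorem comes from the isolated non-torsion point $(1,|\lambda|^{\pm 2})$ rather than from a positive-dimensional translate. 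Your final claim that intersecting a torsion translate of a subtorus with the subtorus $\Char(M(A))\times\{1\}$ again yields torsion translates is correct but not immediate; the paper justifies the analogous step by observing that the intersection is countable, hence finite, hence a coset of a finite group and therefore torsion.
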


The properties (4), (5), (6) and (7) in Theorem \ref{theorem_complex-symplectic} show that $M(A)$ meets the above criteria (a), (b), (c) and (d). Furthermore, (1) shows $M(A)$ carries both the complex and symplectic structures. Nevertheless, property  (8) shows that $M(A)$ is highly nonk\"{a}hler. Putting $Y$ to be a point, then (8)  implies that $M(A)$ is not homotopy equivalent to any compact K\"ahler manifold.

Another advantage of (8) is that, by taking products of $M(A)$ with other manifolds, we get many nonk\"{a}hler examples of higher dimensions. For example, let $K$ be a Kummer surface (see Definition \ref{definition_kummer}). Then $M(A) \times \prod_{j=1}^{n} K$ are manifolds satisfying all conclusions of Theorem \ref{theorem_complex-symplectic} except that their dimensions are $4n+6$.

Notice that Calabi-Yau properties do depend on the choice of complex and symplectic structures. In general, the conclusions (2) and (3) in Theorem \ref{theorem_complex-symplectic} are independent. So both of them are listed in the statement.

There are many matrices $A$ satisfying the assumption of Theorem \ref{theorem_complex-symplectic} (see Example \ref{example_complex-symplectic}). Moreover, $M(A)$ will indeed form a large family of homotopy types thanks to the following theorem.
\begin{theorem}\label{theorem_not_homotopy}
Given two manifolds $M(A_{1})$ and $M(A_{2})$ as in Theorem \ref{theorem_complex-symplectic}, if $A_{1}$ and $A_{2}$ have different spectral radii, then $M(A_{1})$ is not homotopy equivalent to $M(A_{2})$.
\end{theorem}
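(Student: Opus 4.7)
The plan is to distinguish $M(A_{1})$ from $M(A_{2})$ up to homotopy equivalence by producing a numerical homotopy invariant that recovers the spectral radius $\rho(A)$. Since $\pi_{1}(M(A)) \cong \mathbb{Z}\oplus\mathbb{Z}$ is a homotopy invariant, the universal abelian cover $\widetilde{M}(A) \to M(A)$ is canonically determined (up to equivariant homotopy equivalence) by the homotopy type of $M(A)$. Consequently, each complex cohomology $H^{k}(\widetilde{M}(A); \mathbb{C})$ is naturally a module over the Laurent polynomial ring $R = \mathbb{C}[s^{\pm 1}, t^{\pm 1}]$, and its isomorphism class depends on $M(A)$ only through its homotopy type, up to the action of $\mathrm{Out}(\pi_{1}(M(A))) = \GL(2,\mathbb{Z})$ on $R$ by change of variables.

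My first step would be to compute the $R$-module structure on $H^{*}(\widetilde{M}(A); \mathbb{C})$ using the explicit description of $M(A)$ given in Proposition \ref{proposition_M(A)}. I anticipate that the construction exhibits $M(A)$ as a fibre bundle (or iterated mapping torus) over $T^{2}$ whose monodromy is built from $A$ acting on a simply connected $4$-manifold, so that the Serre spectral sequence of this fibration reduces the computation to the induced action of $A$ on the cohomology of the fibre. The eigenvalues of the $R$-action will then be expressible in terms of the eigenvalues $\lambda$ and $\lambda^{-1}$ of $A$, where $|\lambda| = \rho(A) > 1$ by the hyperbolicity hypothesis $|\tr A| > 2$.

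Next I would extract the homotopy invariant
\[
\rho(M(A)) := \sup\bigl\{\, |\alpha| : (\alpha, \beta) \in \Supp_{R} H^{k}(\widetilde{M}(A); \mathbb{C}) \text{ for some } k \,\bigr\},
\]
where the supremum is also taken over all identifications $\pi_{1}(M(A)) \cong \mathbb{Z}^{2}$ (equivalently, over the $\GL(2,\mathbb{Z})$-orbit). By construction this is a homotopy invariant, and by the first step it would equal a fixed positive power of $\rho(A)$; in particular it is a strictly increasing function of $\rho(A)$. Hence $\rho(A_{1}) \ne \rho(A_{2})$ would imply $M(A_{1}) \not\simeq M(A_{2})$, completing the proof. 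This strategy sits naturally alongside the cohomology jump loci approach used elsewhere in the paper, since the supports above are exactly the jump loci of $M(A)$ evaluated on rank-one local systems.

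The main obstacle will lie in the first step: I must show that $\lambda$ genuinely survives as an eigenvalue of the $R$-module structure on some cohomology group that is not killed by a Serre spectral sequence differential, and that the supremum defining $\rho(M(A))$ is realized on eigenvalues originating directly from $A$ rather than from higher iterates of the monodromy (which would give a power of $\rho(A)$ other than the anticipated one). A careful analysis of the monodromies and cohomology of the specific bundle appearing in Proposition \ref{proposition_M(A)}, together with a direct verification on a model example, should resolve both points.
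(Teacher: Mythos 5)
Your overall strategy---recover $|\lambda|$ from the $R=\mathbb{C}[s^{\pm1},t^{\pm1}]$-module $H^{*}(\widetilde{M}(A);\mathbb{C})$, modulo the $\GL(2,\mathbb{Z})$-ambiguity in identifying $\pi_{1}$ with $\mathbb{Z}^{2}$---is in substance the paper's own: away from the trivial character the supports you describe are exactly the jump loci $\bigcup_{j}\Sigma^{j}_{1}(M(A))$, which Lemma \ref{lemma_jump_loci_M(A)} computes via the Wang sequence and Lemma \ref{lemma_gA_eigenvalue}, locating them inside the subgroup $\{1\}\times\mathbb{C}^{*}$ with the points of absolute value $\neq 1$ being precisely $|\lambda|^{\pm2}$. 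Your worry about the first step is unfounded: $M(A)=S^{1}\times N$ with $N$ a mapping torus of $A_{K}$ on the simply connected $K$, so $\widetilde{M}(A)\simeq K$ and the $R$-action is simply $s=\Id$, $t=A_{K}^{*}$; there is no room for a differential to kill $\lambda$, and no higher iterates of the monodromy enter.

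The genuine gap is in your second step. The quantity
\[
\rho(M(A))=\sup\bigl\{\,|\alpha| : (\alpha,\beta)\in\Supp_{R}H^{k}(\widetilde{M}(A);\mathbb{C})\,\bigr\},
\]
with the supremum taken over all identifications $\pi_{1}(M(A))\cong\mathbb{Z}^{2}$, is identically $+\infty$ and hence distinguishes nothing: the support contains a point $(1,\mu)$ with $|\mu|=|\lambda|^{2}>1$, and changing the basis of $\mathbb{Z}^{2}$ by $e_{1}\mapsto e_{1}+ne_{2}$, $e_{2}\mapsto e_{2}$ moves this point to $(\mu^{n},\mu)$, whose first coordinate has absolute value $|\lambda|^{2n}\to\infty$. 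The missing idea is how to control the automorphism of $\Char(M(A))\cong\mathbb{C}^{*}\times\mathbb{C}^{*}$ induced by a hypothetical homotopy equivalence, rather than quotienting it out by a supremum. The paper does this with Lemma \ref{lemma_affine_subgroup}: a non-torsion point lies in at most one irreducible one-dimensional algebraic subgroup, so any isomorphism matching the jump loci must carry $\{1\}\times\mathbb{C}^{*}$ to itself and restrict there to $t\mapsto t^{\pm1}$, forcing $|\lambda_{1}|^{\pm2}=|\lambda_{2}|^{2}$, a contradiction. Alternatively, you could repair your invariant by recording the full $\GL(2,\mathbb{Z})$-orbit of the finite set $\{(|\alpha|,|\beta|)\}\subset\mathbb{R}_{>0}^{2}$ instead of a supremum; a short computation with these orbits again yields $|\lambda_{1}|=|\lambda_{2}|$. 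As written, however, the argument does not close.
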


Our new ingredient to show nonk\"ahlerness is from Hodge theory with twisted coefficients. More precisely, we prove the following theorem, which is essentially a consequence of the structure theorem of cohomology jump loci of compact K\"ahler manifolds in \cite{wang}.
\begin{theorem}\label{theorem_monodromy}
Let $p: E\to S^1$ be a fiber bundle with path-connected fiber $F$. Suppose $H^{j}(F; \mathbb{C})$ are finite dimensional for all $j$ and suppose $E$ is homotopy equivalent to a compact K\"{a}hler manifold. Then the eigenvalues of the monodromy action on $H^{j}(F; \mathbb{C})$ are roots of unity for every $j$.
\end{theorem}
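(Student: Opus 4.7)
The plan is to parametrize a copy of $\mathbb{C}^{*}$ inside the character variety of $\pi_{1}(E)$ using the bundle map $p$, realize the eigenvalues of the monodromy as points where twisted cohomology of $E$ jumps, and then invoke the structure theorem of cohomology jump loci of \cite{wang} to force these points to be roots of unity.

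Since the fiber is path-connected, $p_{\#} : \pi_{1}(E) \to \pi_{1}(S^{1}) = \mathbb{Z}$ is surjective, so dualizing gives an embedding $\mathbb{C}^{*} \hookrightarrow \homo(\pi_{1}(E), \mathbb{C}^{*})$ sending $\lambda$ to the character $\rho_{\lambda}(g) := \lambda^{p_{\#}(g)}$. The Wang exact sequence of the fibration $p$, with coefficients in the rank-one local system $\mathbb{C}_{\rho_{\lambda}}$, reads
\[
\cdots \to H^{j-1}(F; \mathbb{C}) \xrightarrow{\lambda T - 1} H^{j-1}(F; \mathbb{C}) \to H^{j}(E; \mathbb{C}_{\rho_{\lambda}}) \to H^{j}(F; \mathbb{C}) \xrightarrow{\lambda T - 1} H^{j}(F; \mathbb{C}) \to \cdots,
\]
where $T$ denotes the monodromy action on $H^{*}(F; \mathbb{C})$. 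Thus, if $\mu$ is an eigenvalue of $T$ on $H^{j}(F; \mathbb{C})$, then setting $\lambda = \mu^{-1}$ produces $H^{j}(E; \mathbb{C}_{\rho_{\lambda}}) \neq 0$.

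Next I would analyze the cohomology jump locus
\[
\Sigma^{j}(E) := \{\rho \in \homo(\pi_{1}(E), \mathbb{C}^{*}) : H^{j}(E; \mathbb{C}_{\rho}) \neq 0\}.
\]
Because $E$ is homotopy equivalent to a compact K\"ahler manifold and $\Sigma^{j}$ depends only on $\pi_{1}$ and twisted cohomology, it is a homotopy invariant, so the structure theorem of \cite{wang} applies and presents each irreducible component of $\Sigma^{j}(E)$ as a torsion translate of an algebraic subtorus of $\homo(\pi_{1}(E), \mathbb{C}^{*})$. A short calculation in coordinates shows that the intersection of any such torsion translate with the embedded $\mathbb{C}^{*}$ is either all of $\mathbb{C}^{*}$ or a finite set of roots of unity, since the only connected algebraic subgroups of $\mathbb{C}^{*}$ are $\{1\}$ and $\mathbb{C}^{*}$, and characters evaluated at torsion elements are roots of unity.

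To rule out the case $\Sigma^{j}(E) \cap \mathbb{C}^{*} = \mathbb{C}^{*}$, I would invoke finite-dimensionality of $H^{j-1}(F; \mathbb{C})$ and $H^{j}(F; \mathbb{C})$: for all but finitely many $\lambda \in \mathbb{C}^{*}$, the operator $\lambda T - 1$ is invertible on both spaces, so the Wang sequence forces $H^{j}(E; \mathbb{C}_{\rho_{\lambda}}) = 0$. Hence $\Sigma^{j}(E) \cap \mathbb{C}^{*}$ is a finite set of roots of unity, and every eigenvalue of the monodromy on $H^{j}(F; \mathbb{C})$ is a root of unity. The most delicate point is likely the transfer of the structure theorem of \cite{wang} from a K\"ahler manifold to the homotopy equivalent space $E$; however, because cohomology jump loci are formulated purely in terms of $\pi_{1}$ and twisted cohomology, this transfer should be essentially tautological once the theorem is read as a statement about the character variety of the fundamental group.
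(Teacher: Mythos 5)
Your proposal follows essentially the same route as the paper: embed $\Char(S^{1})\cong\mathbb{C}^{*}$ into $\Char(E)$ via $p^{*}$ (injective because $F$ is path-connected), use the Wang sequence with rank-one local coefficients to identify the inverses of the monodromy eigenvalues with the points of $\bigcup_{j}\Sigma^{j}_{1}(E)$ lying on this copy of $\mathbb{C}^{*}$, use finite-dimensionality of $H^{*}(F;\mathbb{C})$ to see this intersection is finite, and then invoke the structure theorem of \cite{wang}. The one step whose stated justification does not work as written is the claim that a finite intersection $(\tau\cdot T)\cap p^{*}\Char(S^{1})$ consists of roots of unity: a point of this intersection has the form $\tau t$ with $t\in T$ not a priori torsion, so neither ``connected subgroups of $\mathbb{C}^{*}$ are trivial or everything'' nor ``torsion characters take values in roots of unity'' applies directly. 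The repair is short: if $N$ is the order of $\tau$ and $\xi\in(\tau\cdot T)\cap p^{*}\Char(S^{1})$, then $\xi^{N}\in T\cap p^{*}\Char(S^{1})$, which is a finite group in this case, so $\xi$ is torsion and hence, being a point of $\Char(S^{1})\cong\mathbb{C}^{*}$, a root of unity. This is precisely where the paper's proof does its work, via the finite group $\bigl(\bigcup_{0\leq r\leq n-1}\tau^{r}\cdot T\bigr)\cap p^{*}\Char(S^{1})$; with that one sentence added, your argument is complete and matches the paper's.
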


The construction of $M(A)$ is motivated by Thurston's example mentioned above (\cite{thurston}). We sketch the construction here and defer more details to Section \ref{section_main}. Each $A \in \SL(2, \mathbb{Z}[\sqrt{-1}])$ yields a biholomorphic automorphism $A_{K}: K \rightarrow K$ of a Kummer surface (Proposition \ref{proposition_biholomorphic}). Gluing the two ends of the cylinder $S^{1} \times [0,1] \times K$ by the relation $(a,0,x) \sim (a, 1, A_{K}(x))$, we get $M(A) = (S^{1} \times [0,1] \times K) / \sim$. Note that $K$ has a symplectic structure resulting from a holomorphic symplectic form on it. As $A_{K}$ preserves the holomorphic symplectic form, $M(A)$ is both complex Calabi-Yau and symplectic Calabi-Yau. This manifold $M(A)$ is a fiber bundle over $S^{1}$ with fiber $S^{1} \times K$. More generally, given any topological space $Y$, the product $M(A) \times Y$ is a fiber bundle over $S^1$ with fiber $S^{1} \times K \times Y$. We will relate the monodromy actions on $H^{j}(S^1\times K\times Y; \mathbb{C})$ to the matrix $A$. As far as $A$ satisfies the assumption in Theorem \ref{theorem_complex-symplectic}, the monodromy actions will violate the conclusion of Theorem \ref{theorem_monodromy}.

We would like to point out that our examples are similar to the ones in \cite{magnusson} by Magn\'{u}sson, where\ a weaker result was proved: those examples are not biholomorphic to any compact K\"ahler manifold.

The outline of this paper is as follows. In Section \ref{section_kummer}, we recall the definition and some properties of Kummer surfaces.  In Sections \ref{section_main} and \ref{section_formality}, we construct examples $M(A)$ and prove the main Theorem \ref{theorem_complex-symplectic}. Finally, Theorems \ref{theorem_not_homotopy} and \ref{theorem_monodromy} are proved in Section \ref{section_jump}.

\textbf{Acknowledgement.} We thank the anonymous referee
for informing us the related work \cite{magnusson}, and for the helpful comments which lead to a simpler proof of Proposition \ref{proposition_symplectic_CY}. We are also grateful for various discussions from June Huh, Stefan Papadima, Alex Suciu and Claire Voisin. We thank Luis Saumell for careful reading of an earlier version of the paper and his helpful suggestions.

\section{Kummer Surfaces}\label{section_kummer}
In this section, we will study a Kummer surface which plays an important role in this paper. For simplicity, we will only use the Kummer surface defined by the stardard lattice. Let us recall its definition first.

Let $\Lambda = \{ (a_1+a_2\sqrt{-1}, a_3+a_4\sqrt{-1}) \mid a_{i} \in \mathbb{Z} \}$ be the standard lattice of $\mathbb{C}^{2}$. Then $\mathbb{C}^{2} / \Lambda$ is a torus of complex dimension $2$, which we denote by $T$. The universal covering map $\pi: \mathbb{C}^{2} \rightarrow T$ is also a homomorphism of complex Lie groups. Let
\[
\hat{\Lambda} = \frac{1}{2}\Lambda=\left\{ \left( \frac{a_1}{2}+\frac{a_2}{2}\sqrt{-1}, \frac{a_3}{2}+\frac{a_4}{2}\sqrt{-1} \right) \middle| \; a_i \in \mathbb{Z}. \right\}
\]

Then $\pi (\hat{\Lambda})$ consists of $16$ points $w_{j}$ ($j=1, \dots, 16$) in $T$. Let $w_{1} = \pi (0)$.

Let $C_{2}=\{ \pm1 \}$ be the group with two elements. It acts on $\mathbb{C}^{2}$ by multiplication. This action descends to a holomorphic action on $T$ with fixed points exactly $w_{j}$ ($j=1, \dots, 16$). Denote the quotient $T/C_2$ by $\bar{T}$. Then $\bar{T}$ is a complex orbifold with $16$ singularities $[w_{j}]$, where $[w_{j}]$ is the image of $w_{j}$ in the quotient space. We shall resolve these singularities and obtain a smooth complex surface.

Here we give a concrete description of the blowup map. Locally, each singularity in the orbifold is isomorphic to the singularity in the quotient orbifold $\mathbb{C}^{2}/ C_{2}$ at the singular point $[0]$. Denote by $\mathcal{H}$ the Hopf line bundle over $\mathbb{CP}^{1}$, i.e.
\[
\mathcal{H} = \{ (v,l) \in \mathbb{C}^{2} \times \mathbb{CP}^{1} \mid v \in l \}.
\]
Define a holomorphic map $F_{1}: \mathcal{H} \rightarrow \mathbb{C}^{2} / C_{2}$ by $F_{1}(v,l) = [v]$, where $[v]$ is the image of $v$ in $\mathbb{C}^{2} / C_{2}$. By identifying the zero section of $\mathcal{H}$ with $\mathbb{CP}^{1}$, we have $F_{1}(\mathbb{CP}^{1}) = [0]$ and $F_{1}|_{\mathcal{H} \setminus \mathbb{CP}^{1}}$ is a holomorphic double covering of $(\mathbb{C}^{2} / C_{2}) \setminus \{ [0] \}$. Let $\mathcal{H}^{2} = \mathcal{H} \otimes \mathcal{H}$ be the tensor square of $\mathcal{H}$. We define another holomorphic map $F_{2}: \mathcal{H} \rightarrow \mathcal{H}^{2}$ by $F_{2} (v,l) = (v \otimes v, l)$. Also by identifying the zero section of $\mathcal{H}^2$ with $\mathbb{CP}^1$, we have $F_{2} (\mathbb{CP}^{1}) = \mathbb{CP}^{1}$ and $F_{2}|_{\mathcal{H} \setminus \mathbb{CP}^{1}}: \mathcal{H} \setminus \mathbb{CP}^{1} \rightarrow \mathcal{H}^{2} \setminus \mathbb{CP}^{1}$ is a holomorphic double covering. It is straightforward to check that $F = F_{1} \circ F_{2}^{-1}: \mathcal{H}^{2} \rightarrow \mathbb{C}^{2} / C_{2}$ is a well-defined holomorphic map. Moreover, $F(\mathbb{CP}^{1}) = \{ [0] \}$ and $F|_{\mathcal{H}^{2} \setminus \mathbb{CP}^{1}}: \mathcal{H} ^{2} \setminus \mathbb{CP}^{1} \rightarrow (\mathbb{C}^{2} / C_{2}) \setminus \{ [0] \}$ is a biholomorphic map. Thus $F: \mathcal{H}^{2} \rightarrow \mathbb{C}^{2} / C_{2}$ resolves the singularity $[0]$ of $\mathbb{C}^{2} / C_{2}$. The map $F$ is called the blowup of $\mathbb{C}^{2} / C_{2}$ at $[0]$.

Now we come back to $\bar{T}$. By blowing up all singular points $\{[w_j]\}$ in  $\bar{T}$, we obtain a resolution of singularity $P: K \rightarrow \bar{T}$.

\begin{defn}\label{definition_kummer}
The complex surface $K$ constructed above is called the Kummer surface defined by the standard lattice.
\end{defn}

Immediately, we have the following lemma.

\begin{lemma}\label{lemma_kummer}
The map $P: K \rightarrow \bar{T}$ is continuous. Each $Y_{j} = P^{-1} ([w_{j}])$ is a closed holomorphic submanifold of $K$. Furthermore,
\[
P|_{K \setminus \bigsqcup_{1\leq j\leq 16} Y_{j}}:  K \setminus \bigsqcup_{1\leq j\leq 16} Y_{j} \rightarrow \bar{T} \setminus \bigsqcup_{1\leq j\leq 16} \{ [w_{j}] \}
\]
is a biholomorphic map.
\end{lemma}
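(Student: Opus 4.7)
The plan is to assemble $K$ and $P$ from the local blowup maps $F:\mathcal{H}^{2}\to\mathbb{C}^{2}/C_{2}$ described in the excerpt, and then read off each claim from the corresponding local statement.

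First I would fix, for each singular point $[w_{j}]$ of $\bar{T}$, a small open neighborhood $U_{j}\subset\bar{T}$ that is (as a complex orbifold) biholomorphic to a neighborhood of $[0]\in\mathbb{C}^{2}/C_{2}$, via a biholomorphism $\varphi_{j}$ sending $[w_{j}]$ to $[0]$; shrinking $U_{j}$ if necessary, I may assume the $U_{j}$ are pairwise disjoint and that each $\varphi_{j}(U_{j})$ is an open neighborhood $W$ of $[0]$ over which $F$ is defined. Setting $V_{j}=F^{-1}(\varphi_{j}(U_{j}))\subset\mathcal{H}^{2}$, the map
\[
F_{j}:=\varphi_{j}^{-1}\circ F\colon V_{j}\longrightarrow U_{j}
\]
is a holomorphic map whose restriction to $V_{j}\setminus\mathbb{CP}^{1}$ is a biholomorphism onto $U_{j}\setminus\{[w_{j}]\}$, and which collapses the zero section $\mathbb{CP}^{1}\subset V_{j}$ to $[w_{j}]$. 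Then $K$ is explicitly the topological (in fact complex analytic) space obtained from the disjoint union
\[
\bigl(\bar{T}\setminus\{[w_{1}],\dots,[w_{16}]\}\bigr)\;\sqcup\;\bigsqcup_{j=1}^{16} V_{j}
\]
by identifying each point $v\in V_{j}\setminus\mathbb{CP}^{1}$ with $F_{j}(v)\in U_{j}\setminus\{[w_{j}]\}\subset\bar{T}$. I would define $P\colon K\to\bar{T}$ by sending the first piece to itself via inclusion and sending $V_{j}$ to $\bar{T}$ via $F_{j}$. The gluing is compatible with these definitions, so $P$ is well defined.

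Next I would check continuity: on each of the pieces of the disjoint union, $P$ equals either an inclusion into $\bar{T}$ or the holomorphic (in particular continuous) map $F_{j}$, and these agree on the overlaps by construction; since $K$ carries the quotient topology from this disjoint union, $P$ is continuous. For the submanifold claim, $Y_{j}=P^{-1}([w_{j}])$ is exactly the zero section $\mathbb{CP}^{1}\subset V_{j}\subset\mathcal{H}^{2}$, which is a closed holomorphic submanifold of the line bundle $\mathcal{H}^{2}$, hence of $K$. Finally, the complement $K\setminus\bigsqcup_{j}Y_{j}$ decomposes as the disjoint union of $\bar{T}\setminus\{[w_{j}]\}$ (where $P$ is the identity) and the punctured neighborhoods $V_{j}\setminus\mathbb{CP}^{1}$ (where $P=F_{j}$ is a biholomorphism onto $U_{j}\setminus\{[w_{j}]\}$); together these give a biholomorphic map onto $\bar{T}\setminus\bigsqcup_{j}\{[w_{j}]\}$.

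There is essentially no obstacle here beyond bookkeeping; the only mildly non-routine point is verifying that the piecewise-defined complex structure on $K$ really makes $P$ holomorphic across the gluing, and this is immediate because on each overlap $V_{j}\setminus\mathbb{CP}^{1}$ the identification $v\sim F_{j}(v)$ was chosen so that $P$ restricts to $F_{j}$ on one side and to the inclusion on the other, and these coincide by definition.
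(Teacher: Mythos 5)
Your proposal is correct and follows exactly the construction the paper has in mind: the paper states this lemma with no proof ("Immediately, we have the following lemma"), regarding it as a direct consequence of the local description of the blowup $F:\mathcal{H}^{2}\to\mathbb{C}^{2}/C_{2}$, and your argument simply writes out the gluing bookkeeping that makes this precise. The only cosmetic remark is that closedness of $Y_{j}$ in all of $K$ is cleanest to deduce from $Y_{j}=P^{-1}([w_{j}])$ with $P$ continuous (or from compactness of $\mathbb{CP}^{1}$), rather than from closedness inside the open chart $V_{j}$.
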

Each $Y_{j}$ defined as above is called an exceptional divisor in $K$.

\begin{rmk}\label{holomorphic_symplectic_form}
It is well known that every Kummer surface $K$ is Calabi-Yau (see \cite[p.\ 241]{Barth_Peters_Van de Ven}), which means that its canonical line bundle $\mathcal{K}_{K} = \bigwedge^{2} \left( T^{*} K \right)$ is holomorphically trivial. Hence there exist holomorphic symplectic forms, i.e. non-degenerate holomorphic $2$-forms, on $K$. In fact, we can give an explicit description of such a form. Let $(z_{1}, z_{2})$ be the standard coordinate of $\mathbb{C}^{2}$. Then $dz_{1} \wedge dz_{2}$ is a holomorphic symplectic form on $T$. Since $dz_{1} \wedge dz_{2}$ is invariant under the $C_{2}$ action, it descends to a holomorphic form on the regular part of $\bar{T}$, which we also denote by $dz_{1} \wedge dz_{2}$. One can easily check that $dz_{1} \wedge dz_{2}$ extends to a non-degenerate holomorphic form on $K$. Denote this holomorphic symplectic form by $\varpi$, and denote its real part by $\re \varpi$. Then $K$ is a real symplectic manifold with symplectic form $\re \varpi$.
\end{rmk}

\begin{rmk}
A Kummer surface $K$ carries many real symplectic forms compatible with its natural complex structure, which define K\"{a}hler structures on $K$. However, $\re \varpi$ is not such a form. We shall exclusively consider the symplectic structure $(K, \re \varpi)$ of $K$ throughout this paper.
\end{rmk}

\begin{prop}\label{proposition_kummer_SCY}
The symplectic manifold $(K, \re \varpi)$ is integrally Calabi-Yau. In other words, $c_{1} (K, \re \varpi)= 0$, where $c_{1}(K, \re \varpi) \in H^{2}(K; \mathbb{Z})$ is the first Chern class of $K$ with respect to the symplectic structure $\re \varpi$.
\end{prop}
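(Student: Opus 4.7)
The first Chern class $c_1(K, \re\varpi)$ is by definition $c_1(TK, J)$ for any almost complex structure $J$ on $K$ compatible with $\re\varpi$. Since the space of such $J$ is non-empty and contractible, the resulting integer class is independent of $J$. My plan is to exhibit an \emph{integrable} complex structure $J'$ on $K$, compatible with $\re\varpi$, whose canonical line bundle $\mathcal{K}_{J'}$ is holomorphically trivial; triviality then gives $c_1(TK, J') = -c_1(\mathcal{K}_{J'}) = 0$.

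The key tool is the hyperk\"ahler structure on the Kummer surface. Since $K$ is a K3 surface, Yau's theorem --- or, more explicitly, the Kronheimer/Topiwala--Page construction that glues an Eguchi--Hanson instanton into the flat hyperk\"ahler orbifold $\bar T$ at each of the 16 singular points --- provides a Ricci-flat K\"ahler metric $g$ on $K$ whose holonomy reduces to $\Sp(1) = \mathrm{SU}(2)$. This yields three parallel integrable complex structures $I_1, I_2, I_3$ satisfying $I_1 I_2 = -I_2 I_1 = I_3$, all K\"ahler with respect to $g$, with K\"ahler forms $\omega_k(X, Y) = g(I_k X, Y)$.

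By rotating within the twistor $\mathbb{CP}^1$ I arrange that $I_1$ coincides with the natural complex structure of Definition~\ref{definition_kummer}. Then $\omega_2 + i\omega_3$ is a nowhere vanishing holomorphic symplectic form on $(K, I_1)$, so it equals $c\varpi$ for some $c \in \mathbb{C}^*$ because the K3 surface $(K, I_1)$ has $\dim_{\mathbb{C}} H^0((K, I_1), \mathcal{K}_{I_1}) = 1$. Rescaling $g$ by $|c|$ and performing a further rotation in the $(I_2, I_3)$-plane of the twistor sphere normalizes this to $\varpi = \omega_2 + i\omega_3$; in particular $\re\varpi = \omega_2$.

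Setting $J' := I_2$, compatibility with $\re\varpi = \omega_2$ is the K\"ahler condition on $(K, I_2, g)$, and the form $\omega_3 + i\omega_1$ is a nowhere vanishing holomorphic $(2, 0)$-form on $(K, I_2)$ trivializing $\mathcal{K}_{I_2}$. Hence $c_1(K, \re\varpi) = c_1(TK, I_2) = 0$, as claimed. The main obstacle is the appeal to the hyperk\"ahler structure, which requires substantial analytic input (either Yau's theorem on Ricci-flat K\"ahler metrics or an explicit Eguchi--Hanson gluing). A more elementary alternative would take the flat compatible almost complex structure $I_2$ on $T = \mathbb{C}^2/\Lambda$ (which commutes with $-\mathrm{Id}$ and hence descends to $\bar T^{\text{reg}}$) and extend it across the exceptional divisors of $K$ as an almost complex structure compatible with $\re\varpi$; however, proving triviality of the resulting canonical line bundle without the integrability afforded by the hyperk\"ahler metric appears to require comparable effort.
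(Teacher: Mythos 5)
Your argument is correct, but it takes a much heavier route than the paper. The paper's proof is a two-line linear-algebra observation: since $\varpi$ is a fiberwise nondegenerate \emph{complex}-bilinear form with $\re\varpi$ the symplectic form, the structure group of the symplectic bundle $(TK,\re\varpi)$ reduces from $\Sp(4,\mathbb{R})$ to $\Sp(2,\mathbb{C})\cong\SL(2,\mathbb{C})$, which is simply connected; hence $H^{2}(B\Sp(2,\mathbb{C});\mathbb{Z})=0$ and every degree-$2$ characteristic class of the bundle, in particular $c_{1}$, vanishes. This needs no metric, no integrability input beyond the existence of $\varpi$, and no analysis. Your proof instead invokes the hyperk\"ahler structure on the K3 surface --- i.e.\ Yau's theorem or an Eguchi--Hanson gluing --- to manufacture an integrable complex structure $I_{2}$ compatible with $\re\varpi=\omega_{2}$ whose canonical bundle is trivialized by $\omega_{3}+i\omega_{1}$. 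The steps all check out (one small simplification: there is no need to ``rotate so that $I_{1}$ coincides with the natural complex structure''; apply Yau directly to the given complex structure and a chosen K\"ahler class, and $I_{1}$ is that structure by construction), and your approach does buy something extra: it shows $\re\varpi$ is actually a K\"ahler form for a \emph{different} integrable complex structure on $K$, which is a stronger statement than $c_{1}=0$ and a nice counterpoint to the paper's remark that $\re\varpi$ is not K\"ahler for the natural complex structure. But the structure-group argument is what the paper needs, because it transfers verbatim to the vertical subbundle of $TM_{S}(A)$ in Proposition \ref{proposition_symplectic_CY}, where one works bundle-by-bundle rather than with a global metric. The ``more elementary alternative'' you sketch at the end is essentially groping toward this: the point is that one never needs to extend a compatible almost complex structure across the exceptional divisors at all, only to observe the $\Sp(2,\mathbb{C})$-reduction that $\varpi$ already provides everywhere.
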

\begin{proof}
Since $\varpi$ is holomorphic symplectic, the structural group $\Sp (4, \mathbb{R})$ of the tangent bundle $(TK, \re \varpi)$ can be reduced to $\Sp (2, \mathbb{C})$. Here $\Sp (4, \mathbb{R})$ and $\Sp (2, \mathbb{C})$ are the real and complex symplectic groups respectively.
Therefore, the conclusion follows from the fact that $\Sp (2, \mathbb{C})$ is simply connected.
\end{proof}

Next, we will prove some topological properties of Kummer surfaces.

\begin{prop}{\cite[(8.6) in Page 257]{Barth_Peters_Van de Ven}}\label{proposition_kummer_1-connected}
Every Kummer surface $K$ is simply connected.
\end{prop}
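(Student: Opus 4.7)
The plan is to apply van Kampen's theorem to a decomposition of $K$ into the regular locus of the resolution $P\colon K \to \bar{T}$ and tubular neighborhoods of the sixteen exceptional divisors, and then to compute the fundamental group of the regular locus using the covering $T \to \bar{T}$. Set $U = K \setminus \bigsqcup_{j=1}^{16} Y_{j}$; by Lemma \ref{lemma_kummer}, $U$ is biholomorphic to the smooth locus $\bar{T}^{\mathrm{reg}} = \bar{T} \setminus \{[w_{1}],\ldots,[w_{16}]\}$. For each $j$ let $V_{j}$ be an open tubular neighborhood of $Y_{j}$ modeled on $\mathcal{H}^{2}$ from the blowup construction above. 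Then $V_{j}$ deformation retracts onto $Y_{j} \cong \mathbb{CP}^{1}$, which is simply connected, while $V_{j} \setminus Y_{j}$ is homotopy equivalent to the unit circle bundle of the normal bundle $\mathcal{H}^{2} = \mathcal{O}(-2)$, namely the lens space $L(2,1) \cong \mathbb{RP}^{3}$. Hence $\pi_{1}(V_{j}\setminus Y_{j}) \cong \mathbb{Z}/2$, and iterated van Kampen reduces the problem to showing that the normal closure in $\pi_{1}(U)$ of generators $\mu_{j}$ of $\pi_{1}(V_{j}\setminus Y_{j})$ exhausts $\pi_{1}(U)$.

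To compute $\pi_{1}(U) = \pi_{1}(\bar{T}^{\mathrm{reg}})$, I will use the unramified double cover $T \setminus \pi(\hat{\Lambda}) \to \bar{T}^{\mathrm{reg}}$ induced by $T \to \bar{T}$. Since removing sixteen points of real codimension $4$ from the $4$-manifold $T$ does not change the fundamental group, $\pi_{1}(T \setminus \pi(\hat{\Lambda})) = \pi_{1}(T) = \Lambda$, and one obtains a short exact sequence
\[
1 \longrightarrow \Lambda \longrightarrow \pi_{1}(\bar{T}^{\mathrm{reg}}) \longrightarrow C_{2} \longrightarrow 1.
\]
A direct lift of $\mu_{1}^{2}$ to the universal cover $\mathbb{C}^{2}$ (using $w_{1} = \pi(0)$) shows that $\mu_{1}$ has order two, so the extension splits as $\Lambda \rtimes C_{2}$ with $C_{2}$ acting on $\Lambda$ by multiplication by $-1$.

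Each $\mu_{j}$ projects to the non-trivial element of $C_{2}$, so $\mu_{j}\mu_{1}^{-1}$ lies in $\Lambda$. Lifting this loop to $\mathbb{C}^{2}$ and using the local normal form $u \mapsto -u$ of the $C_{2}$-action near $w_{j}$, I expect to identify $[\mu_{j}\mu_{1}^{-1}]$ with $2 v_{j} \in \Lambda$, where $v_{j} \in \hat{\Lambda}$ is a fixed lift of $w_{j}$ and $v_{1} = 0$. As $j$ varies over $1,\ldots,16$, the vectors $2 v_{j}$ run over $\{0,1\}^{4} \subset \mathbb{Z}^{4}$, which contain the standard basis of $\Lambda$ and therefore $\mathbb{Z}$-generate $\Lambda$. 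Hence the relations $\mu_{j} \mu_{1}^{-1} = 1$ kill $\Lambda$, and $\mu_{1} = 1$ then kills the $C_{2}$-section, yielding $\pi_{1}(K) = 1$.

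The main obstacle I anticipate is the explicit lift calculation identifying $[\mu_{j}\mu_{1}^{-1}]$ with $2 v_{j}$. Once basepoints in $\mathbb{C}^{2}$, $T$, and $\bar{T}$ are fixed and normal coordinates at each $w_{j}$ are set up, this is essentially bookkeeping in the universal cover; the only subtlety is to make sure the computation is consistent across all sixteen fixed points so that the resulting elements really span $\Lambda$.
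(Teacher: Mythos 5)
Your argument is correct, but note that the paper does not prove this proposition at all: it is quoted from Barth--Peters--Van de Ven \cite[(8.6), p.~257]{Barth_Peters_Van de Ven}, so you are supplying a proof where the authors rely on a standard reference. Your van Kampen computation is the classical elementary route and it goes through: the pieces $V_j$ retract onto $\mathbb{CP}^1$, the links $V_j\setminus Y_j\simeq \mathbb{RP}^3$ have $\pi_1\cong \mathbb{Z}/2$ (consistent with the paper's own observation in Lemma \ref{lemma_1_k_homology} that $\partial\mathcal{N}_j\cong\mathbb{RP}^3$), and $\pi_1(\bar{T}^{\mathrm{reg}})\cong\Lambda\rtimes C_2$ via the unramified double cover. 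The one step you flag as ``expected,'' the identification $[\mu_j\mu_1^{-1}]=2v_j$, does check out: viewing $\Lambda\rtimes C_2$ as the deck group of $\mathbb{C}^2\setminus\hat\Lambda\to\bar{T}^{\mathrm{reg}}$, the meridian $\mu_j$ corresponds to $z\mapsto -z+2v_j$, so $\mu_j\mu_1^{-1}$ is translation by $2v_j$, and the sixteen vectors $2v_j\in\{0,1\}^4$ contain the standard basis of $\Lambda$. Hence the normal closure of the meridians is all of $\Lambda\rtimes C_2$ and $\pi_1(K)=1$. Compared with invoking the K3 theory in the cited reference, your proof is longer but self-contained and uses only the explicit blowup model already set up in Section \ref{section_kummer}; the only thing left to make it fully rigorous is the careful basepoint and lift bookkeeping you already identify, which is routine.
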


The ring structure of $H^{*}(K; \mathbb{R})$ is well known (see e.g. \cite[Page 241]{Barth_Peters_Van de Ven}). However, most descriptions of $H^{*}(K; \mathbb{R})$ in the literature are rather abstract and algebraic, which is not enough for our purposes. We shall describe it more concretely and geometrically.
\begin{lemma}\label{lemma_kummer_odd_homology}
$H^{2j+1}(K; \mathbb{R}) = 0$ for all $j$.
\end{lemma}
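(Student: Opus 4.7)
The plan is short because the statement only has content in two degrees. Since $K$ is a complex surface, it is a compact oriented smooth real $4$-manifold, so $H^{j}(K; \mathbb{R}) = 0$ for $j > 4$ by dimension alone. Thus $H^{2j+1}(K; \mathbb{R}) = 0$ for every $j \geq 2$ is automatic, and the only cases to verify are $j = 0$ (degree $1$) and $j = 1$ (degree $3$).

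For $j=0$, I would invoke Proposition \ref{proposition_kummer_1-connected}, which says $K$ is simply connected. Hence $H_{1}(K; \mathbb{Z}) = 0$ by the Hurewicz theorem, and therefore $H^{1}(K; \mathbb{R}) = 0$ by the universal coefficient theorem. For $j=1$, I would use Poincaré duality for the compact oriented $4$-manifold $K$: the pairing
\[
H^{1}(K; \mathbb{R}) \otimes H^{3}(K; \mathbb{R}) \longrightarrow H^{4}(K; \mathbb{R}) \cong \mathbb{R}
\]
is non-degenerate, so $H^{3}(K; \mathbb{R}) \cong H^{1}(K; \mathbb{R})^{*} = 0$.

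There is no real obstacle in this argument; the only thing one has to be comfortable with is that the resolution $K$ produced in the previous discussion is indeed a smooth compact oriented manifold of real dimension $4$, which is clear from the explicit local model $F\colon \mathcal{H}^{2} \to \mathbb{C}^{2}/C_{2}$ used to blow up each singularity of $\bar{T}$. An alternative route, if one prefers to avoid quoting Proposition \ref{proposition_kummer_1-connected}, would be to realize $K$ topologically as $\tilde{T}/\tilde{\sigma}$, where $\tilde{T} \to T$ is the blow-up of the torus $T$ at the $16$ fixed points of $\sigma\colon v \mapsto -v$ and $\tilde{\sigma}$ is the induced involution, and then compute $H^{*}(K; \mathbb{R}) = H^{*}(\tilde{T}; \mathbb{R})^{\tilde{\sigma}}$; since $\sigma$ acts by $-1$ on $H^{1}(T; \mathbb{R}) = H^{1}(\tilde{T}; \mathbb{R})$ and by $(-1)^{3} = -1$ on $H^{3}$, the odd invariants vanish. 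But the Poincaré-duality approach is shorter and is what I would write.
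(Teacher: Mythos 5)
Your argument is exactly the paper's proof, just written out in detail: the paper also deduces the lemma from Proposition \ref{proposition_kummer_1-connected} (simple connectedness, giving $H^{1}=0$) together with Poincar\'e duality (giving $H^{3}=0$). The proposal is correct.
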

\begin{proof}
It follows immediately from Proposition \ref{proposition_kummer_1-connected} and the Poincar\'e duality.
\end{proof}

\begin{lemma}\label{lemma_1_k_homology}
There exists a decomposition of the cohomology group
\begin{equation}\label{lemma_1_k_homology_1}
H^{2}(K; \mathbb{R}) = \bigoplus_{1\leq j\leq 16} \mathbb{R} \langle \PD (Y_{j}) \rangle \oplus H^{2} \left( K, \bigsqcup_{1\leq j\leq 16} Y_{j}; \mathbb{R} \right)
\end{equation}
where each $Y_{j}$ is the exceptional divisor defined in Lemma \ref{lemma_kummer}, $\PD (Y_{j}) \in H^{2}(K; \mathbb{Z})$ is the Poincar\'{e} dual of $Y_{j}$ and $\mathbb{R} \langle \PD (Y_{j}) \rangle$ is the real vector space generated by $\PD (Y_{j})$. All vector spaces on the right hand side are naturally subspaces of $H^{2}(K; \mathbb{R})$.
\end{lemma}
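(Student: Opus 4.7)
The plan is to extract the decomposition from the long exact sequence of the pair $(K, Y)$ where $Y = \bigsqcup_{j=1}^{16} Y_j$, combined with an explicit calculation of how the classes $\PD(Y_j)$ restrict to $Y$. The key segment of the sequence is
\[
H^1(Y; \mathbb{R}) \to H^2(K, Y; \mathbb{R}) \to H^2(K; \mathbb{R}) \xrightarrow{r} H^2(Y; \mathbb{R}).
\]
Since each $Y_j$ appears as the zero section of the bundle $\mathcal{H}^2$ in the local blowup picture, $Y_j \cong \mathbb{CP}^1$. Hence $H^1(Y;\mathbb{R}) = 0$ and $H^2(Y;\mathbb{R}) \cong \mathbb{R}^{16}$. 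The vanishing of $H^1(Y;\mathbb{R})$ already guarantees that $H^2(K, Y; \mathbb{R})$ injects into $H^2(K;\mathbb{R})$ as $\ker r$.

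Next I would compute the matrix of $r$ in the basis of the Poincar\'e duals $\PD(Y_j)$. For $j \neq k$, the exceptional divisors $Y_j$ and $Y_k$ are disjoint, so $\PD(Y_j)$ can be represented by a closed $2$-form supported in a tubular neighborhood of $Y_j$, which restricts to $0$ on $Y_k$. For $j = k$, the standard self-intersection formula identifies $r(\PD(Y_j))\big|_{Y_j}$ with the Euler class of the normal bundle of $Y_j$ in $K$. By the construction recalled in the excerpt, this normal bundle is $\mathcal{H}^2$ over $Y_j \cong \mathbb{CP}^1$, which is the degree $-2$ line bundle, so its Euler number is $-2 \neq 0$. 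Therefore $r$ restricted to $\bigoplus_j \mathbb{R}\langle \PD(Y_j)\rangle$ is diagonal with nonzero entries, hence an isomorphism onto $H^2(Y;\mathbb{R})$.

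Combining these observations: the restriction map $r$ is surjective; the classes $\PD(Y_j)$ are linearly independent; and $\bigoplus_j \mathbb{R}\langle \PD(Y_j)\rangle$ meets $\ker r = H^2(K, Y; \mathbb{R})$ trivially. Since $r$ is surjective onto a $16$-dimensional space, a dimension count shows that $H^2(K;\mathbb{R})$ is the internal direct sum of these two subspaces, which is the claimed decomposition.

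The only nontrivial step is the self-intersection computation, but it is essentially built into the explicit blowup $F:\mathcal{H}^2 \to \mathbb{C}^2/C_2$ recalled earlier, so I do not expect a real obstacle; the rest is formal use of the long exact sequence and Poincar\'e--Lefschetz duality.
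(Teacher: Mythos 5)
Your proof is correct, but it takes a genuinely different route from the paper's. The paper splits $H^{2}(K;\mathbb{R})$ geometrically: it cuts $K$ into disjoint closed tubular neighborhoods $\mathcal{N}_{j}$ of the $Y_{j}$ and the complementary piece $W$, applies Mayer--Vietoris (using that $\partial\mathcal{N}_{j}\cong\mathbb{RP}^{3}$ has trivial real cohomology in degrees $1$ and $2$) to get $H^{2}(K;\mathbb{R})=\bigoplus_{j}H^{2}(\mathcal{N}_{j},\partial\mathcal{N}_{j};\mathbb{R})\oplus H^{2}(W,\partial W;\mathbb{R})$, and then identifies the first summands with $\mathbb{R}\langle\PD(Y_{j})\rangle$ via the Thom class and the second with $H^{2}\bigl(K,\bigsqcup_{j}Y_{j};\mathbb{R}\bigr)$ by excision. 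You instead split the long exact sequence of the pair $(K,Y)$ algebraically: $H^{1}(Y;\mathbb{R})=0$ embeds $H^{2}(K,Y;\mathbb{R})$ as $\ker r$, and the nondegeneracy of the restriction map on $\bigoplus_{j}\mathbb{R}\langle\PD(Y_{j})\rangle$ (diagonal with entries $-2$, the self-intersection of each exceptional curve) supplies the complementary summand. Your approach trades the paper's tubular-neighborhood bookkeeping for one concrete geometric input, namely that the normal bundle of each $Y_{j}$ is $\mathcal{O}(-2)$ with nonzero Euler number; this is indeed immediate from the explicit blowup $F:\mathcal{H}^{2}\to\mathbb{C}^{2}/C_{2}$ recalled in Section 2, and all the linear algebra at the end (injectivity on the span of the $\PD(Y_{j})$, trivial intersection with $\ker r$, dimension count) is carried out correctly. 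Either argument establishes the lemma; the paper's version has the mild advantage of never needing to know the self-intersection numbers, while yours is shorter and makes the role of the intersection form on the exceptional classes transparent.
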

\begin{proof}
Given any $Y_j$, we can choose a small closed tubular neighborhood $\mathcal{N}_{j}$ of $Y_{j}$. These $\mathcal{N}_{j}$ are pairwise disjoint and they are homeomorphic to a tubular neighborhood (or a disk bundle) of $\mathbb{CP}^{1}$ in $\mathcal{H}^{2}$. Thus each $\partial \mathcal{N}_{j}$ is homeomorphic to $\mathbb{RP}^{3}$. Let $W = K \setminus \bigsqcup_{1\leq j\leq 16} \Int \mathcal{N}_{j}$, where $\Int \mathcal{N}_{j}$ is the interior of $\mathcal{N}_{j}$. Using the Mayer-Vietoris sequence, we have
\[
H^{2}(K; \mathbb{R}) =\left( \bigoplus_{1\leq j\leq 16} H^{2}(\mathcal{N}_{j}, \partial \mathcal{N}_{j}; \mathbb{R}) \right) \oplus H^{2}(W, \partial W; \mathbb{R}),
\]
where the vector spaces on the right side can be considered as subspaces of $H^{2}(K; \mathbb{R})$ by excision.

Since $\mathcal{N}_{j}$ is a disk bundle associated with $\mathcal{H}^{2}$ over $Y_{j}$, we infer that the Thom class $\alpha_{j} \in H^{2}(\mathcal{N}_{j}, \partial \mathcal{N}_{j}; \mathbb{R})$ is a generator of $H^{2}(\mathcal{N}_{j}, \partial \mathcal{N}_{j}; \mathbb{R})$. Moreover, $\alpha_{j} = \PD (Y_{j})$, if we consider $\alpha_{j}$ as an element in $H^{2}(K; \mathbb{R})$ (see Problem 11-C in \cite[p.\ 135]{milnor_stasheff} and \cite[p.\ 67]{Bott_Tu}). Thus $H^{2}(\mathcal{N}_{j}, \partial \mathcal{N}_{j}; \mathbb{R}) = \mathbb{R} \langle \PD (Y_{j}) \rangle$ as subspaces of $H^2(K; \mathbb{R})$.

By excision, $H^{2}(W, \partial W; \mathbb{R}) = H^{2} \left( K, \bigsqcup_{1\leq j\leq 16} \mathcal{N}_{j}; \mathbb{R} \right)$. Since $Y_{j}$ is a deformation retract of $\mathcal{N}_{j}$, we infer that $H^{2} \left( K, \bigsqcup_{1\leq j\leq 16} \mathcal{N}_{j}; \mathbb{R} \right) = H^{2} \left( K, \bigsqcup_{1\leq j\leq 16} Y_{j}; \mathbb{R} \right)$. Therefore (\ref{lemma_1_k_homology_1}) follows.
\end{proof}

\begin{lemma}\label{lemma_2_k_homology}
The following induced maps on cohomology groups are all isomorphisms,
\[
H^{2} \left( K, \bigsqcup_{1\leq j\leq 16} Y_{j}; \mathbb{R} \right) \overset{P^{*}}{\longleftarrow} H^{2} \left( \bar{T}, \bigsqcup_{1\leq j\leq 16}\{ [w_{j}]\}; \mathbb{R} \right) \overset{Q^{*}}{\longrightarrow} H^{2}(T; \mathbb{R})
\]
where $Q: T \rightarrow \bar{T}$ is the quotient map and $P$ is the blowup map in Lemma \ref{lemma_kummer}.

\end{lemma}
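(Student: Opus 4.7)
The plan is to reduce both claims to assertions about the complements of small neighborhoods of the singular/branch points, where $P$ becomes a homeomorphism and $Q$ becomes a free double cover. Throughout, I will use closed tubular neighborhoods $\mathcal{N}_j$ of the exceptional divisors $Y_j$ in $K$ as in the proof of Lemma \ref{lemma_1_k_homology}, together with closed neighborhoods $U_j = P(\mathcal{N}_j)$ of $[w_j]$ in $\bar T$ and $V_j = Q^{-1}(U_j)$ of $w_j$ in $T$.

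For $P^*$: since $Y_j \hookrightarrow \mathcal{N}_j$ and $\{[w_j]\} \hookrightarrow U_j$ are deformation retracts, the inclusions of pairs $(K, \bigsqcup Y_j) \hookrightarrow (K, \bigsqcup \mathcal{N}_j)$ and $(\bar T, \bigsqcup \{[w_j]\}) \hookrightarrow (\bar T, \bigsqcup U_j)$ induce cohomology isomorphisms. Excision then identifies the latter groups with $H^*(W, \partial W; \mathbb{R})$ and $H^*(V, \partial V; \mathbb{R})$, where $W = K \setminus \bigsqcup \mathrm{Int}\,\mathcal{N}_j$ and $V = \bar T \setminus \bigsqcup \mathrm{Int}\,U_j$. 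By Lemma \ref{lemma_kummer}, $P$ restricts to a homeomorphism $(W, \partial W) \to (V, \partial V)$, so the induced $P^*$ is an isomorphism at this level; unwinding the identifications yields the claim.

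For $Q^*$: the analogous excision gives
\[
H^2(\bar T, \bigsqcup \{[w_j]\}; \mathbb{R}) \cong H^2(V, \partial V; \mathbb{R}), \qquad H^2(T; \mathbb{R}) \cong H^2\bigl(T \setminus \bigsqcup \mathrm{Int}\,V_j,\, \bigsqcup \partial V_j;\, \mathbb{R}\bigr),
\]
where the second isomorphism combines excision with the long exact sequence of $(T, \bigsqcup \{w_j\})$ (noting that $H^{\bullet}$ of a finite set vanishes in positive degree). The restriction of $Q$ is a free $C_2$-covering $T \setminus \bigsqcup \mathrm{Int}\,V_j \to V$ that double-covers the boundary. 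A standard transfer argument with real coefficients identifies $Q^*$ with the inclusion of $C_2$-invariants, so it remains to check that $C_2$ acts trivially on $H^2(T; \mathbb{R})$. But the involution $z \mapsto -z$ acts by $-1$ on $H^1(T; \mathbb{R})$, hence by $(-1)^2 = 1$ on $H^2(T; \mathbb{R}) \cong \bigwedge^2 H^1(T; \mathbb{R})$, so the $C_2$-invariants are all of $H^2(T; \mathbb{R})$ and $Q^*$ is an isomorphism.

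The only non-formal input is the transfer statement used in the $Q^*$ case; the main bookkeeping issue will be verifying that all the successive identifications (homotopy invariance, excision, the long exact sequence) are sufficiently natural so that the resulting composite is indeed the map $Q^*$ (respectively $P^*$) appearing in the statement, rather than some other isomorphism between the correct groups.
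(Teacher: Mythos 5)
Your proposal is correct and takes essentially the same approach as the paper's proof: reduce to the complement of the exceptional divisors and fixed points, where $P$ is a homeomorphism and $Q$ is a free double cover, identify the image of $Q^{*}$ with the $C_{2}$-invariants via transfer, and observe that the involution acts trivially on $H^{2}(T;\mathbb{R})$. You even supply the justification (action by $-1$ on $H^{1}$, hence by $+1$ on $\bigwedge^{2}H^{1}$) that the paper merely asserts.
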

\begin{proof}
Clearly, $\bigsqcup_{1\leq j\leq 16} Y_{j}$ is a neighborhood deformation retract of $K$, and $\bigsqcup_{1\leq j\leq 16} \{ [w_{j}] \}$ is also a neighborhood deformation retract of $\bar{T}$. According to Lemma \ref{lemma_kummer}, $P: K \setminus \bigsqcup_{1\leq j\leq 16} Y_{j} \rightarrow \bar{T}\setminus \bigsqcup_{1\leq j\leq 16} \{ [w_{j}] \}$ is a homeomorphism. So $P^{*}$ is an isomorphism.

Consider the following commutative diagram,
\[
\xymatrix{
  H^{2} \left( \bar{T}, \bigsqcup_{1\leq j\leq 16} \{ [w_{j}] \}; \mathbb{R} \right) \ar[d] \ar[r]^-{Q^{*}} & H^{2} \left( T, \bigsqcup_{1\leq j\leq 16} \{ w_{j} \}; \mathbb{R} \right) \ar[d] \\
  H^{2} \left( \bar{T}; \mathbb{R} \right) \ar[d] \ar[r]^-{Q^{*}} & H^{2} (T; \mathbb{R}) \ar[d] \\
  H^{2} \left( \bar{T}\setminus \bigsqcup_{1\leq j\leq 16} \{ [w_{j}] \}; \mathbb{R} \right) \ar[r]^-{\hat{Q}^{*}} & H^{2} \left( T \setminus \bigsqcup_{1\leq j\leq 16} \{ w_{j} \}; \mathbb{R} \right)   }
\]
where $\hat{Q}$ is the restriction of $Q$ to $T \setminus \cup_{j=1}^{16} \{ w_{j} \}$, and all vertical maps are induced by the inclusions of spaces.

It is easy to check that all vertical maps are isomorphisms. Since
$$\hat{Q}: T \setminus \bigsqcup_{1\leq j\leq16} \{ w_{j} \} \rightarrow \bar{T} \setminus \bigsqcup_{1\leq j\leq 16} \{ [w_{j}] \}$$ is a double covering, $\hat{Q}^{*}$ is injective and its image contains exactly the elements fixed by the $C_2$ action
of $H^{2} \left( T \setminus \bigsqcup_{1\leq j\leq 16} \{ w_{j} \}; \mathbb{R} \right)$. Since $H^{2} \left( T \setminus \bigsqcup_{1\leq j\leq 16} \{ w_{j} \}; \mathbb{R} \right) = H^{2} (T; \mathbb{R})$ and the whole space $H^{2} (T; \mathbb{R})$ is fixed by $C_{2}$, we infer that $\hat{Q}^{*}$ is surjective. Therefore,
$$Q^{*}: H^{2} \left( \bar{T}, \bigsqcup_{1\leq j\leq 16} \{ [w_{j}] \}; \mathbb{R} \right) \rightarrow H^{2}(T; \mathbb{R})$$
is an isomorphism.
\end{proof}

Suppose $A \in \SL (2, \mathbb{Z} \sqrt{-1})$. Then $\det (A) =1$, and $A$ preserves the lattice $\Lambda$, i.e. $A (\Lambda) = \Lambda$. Hence $A$ descends to a complex Lie group automorphism of $T$, which we denote by $A_T$.

The following lemma is obvious.

\begin{lemma}\label{lemma_linear_map}
The following diagram commutes.
\[
\xymatrix{
  \mathbb{C}^{2} \ar[d]_{\pi} \ar[r]^{A} & \mathbb{C}^{2} \ar[d]_{\pi} \\
  T \ar[r]^{A_T} & T  }
\]
Moreover, the automorphism $A_T: T\to T$ induces a permutation of the points $w_{j} \;(j=1, \dots, 16)$.
\end{lemma}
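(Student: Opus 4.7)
The commutativity of the square is essentially tautological from the construction of $A_T$. By assumption $A \in \SL(2, \mathbb{Z}[\sqrt{-1}])$ preserves the lattice $\Lambda$, so the composition $\pi \circ A : \mathbb{C}^{2} \to T$ factors through the quotient $\pi : \mathbb{C}^2 \to \mathbb{C}^2/\Lambda = T$; the induced map is by definition $A_T$. Hence $\pi \circ A = A_T \circ \pi$, which is precisely the commutativity of the diagram.

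For the second assertion, I would note that the sixteen points $w_j$ are by definition the image $\pi(\hat\Lambda)$ of the half-lattice $\hat\Lambda = \frac{1}{2}\Lambda \subset \mathbb{C}^2$. Since $A$ is $\mathbb{C}$-linear and preserves $\Lambda$, it also preserves $\hat\Lambda$: indeed $A(\hat\Lambda) = A(\tfrac{1}{2}\Lambda) = \tfrac{1}{2}A(\Lambda) = \tfrac{1}{2}\Lambda = \hat\Lambda$. Combining this with the commutative square just established gives
\[
A_T(\{w_1,\dots,w_{16}\}) = A_T(\pi(\hat\Lambda)) = \pi(A(\hat\Lambda)) = \pi(\hat\Lambda) = \{w_1,\dots,w_{16}\}.
\]
Since $A_T$ is a bijection of $T$ onto itself and the set $\{w_j\}$ is finite, the restriction of $A_T$ to this set is a permutation, as claimed.

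There is no real obstacle here; the statement is a direct unwinding of definitions. The only subtlety worth mentioning is the observation that preservation of $\Lambda$ by a $\mathbb{C}$-linear $A$ automatically forces preservation of the rescaled lattice $\frac{1}{2}\Lambda$, which is what identifies the sixteen two-torsion points as a set invariant under $A_T$.
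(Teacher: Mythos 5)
Your proof is correct and is exactly the routine unwinding of definitions that the paper has in mind; the paper simply declares the lemma obvious and omits any argument. Your observation that $A(\Lambda)=\Lambda$ forces $A(\tfrac{1}{2}\Lambda)=\tfrac{1}{2}\Lambda$ (equivalently, that $A_T$ preserves the $2$-torsion points of $T$) is the only point worth writing down, and you have it.
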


Since $A_T: T\to T$ commutes with the $C_{2}$ action, it descends further to an automorphism of complex orbifold $A_{\bar{T}}: \bar{T}\to \bar{T}$.
Thus we can blow up $A_{\bar{T}}:\bar{T}\to \bar{T}$ to get a holomorphic map $A_K: K \rightarrow K$.

\begin{prop}\label{proposition_biholomorphic}
(1). The following diagram commutes, where the map $P$ is the blowup map defined in Lemma \ref{lemma_kummer}.
\[
\xymatrix{
  K \ar[d]_{P} \ar[r]^{A_K} & K \ar[d]^{P} \\
  \bar{T} \ar[r]^{A_{\bar{T}}} & \bar{T} }
\]
Moreover, $A_K$ is biholomorphic.

(2). The map $A_K$ permutes the exceptional divisors $Y_{j}$ ($j=1, \dots, 16$).
\end{prop}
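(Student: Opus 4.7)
The plan is to construct $A_K$ explicitly by extending the obvious biholomorphism $P^{-1} \circ A_{\bar T} \circ P$ across each exceptional divisor, and then to verify that the extension is globally biholomorphic and permutes the $Y_j$. On the open set $K \setminus \bigsqcup_j Y_j$, the map $A_K := P^{-1} \circ A_{\bar T} \circ P$ is well-defined and biholomorphic by Lemma \ref{lemma_kummer}, since $A_{\bar T}$ preserves $\bar T \setminus \bigsqcup_j \{[w_j]\}$ (by Lemma \ref{lemma_linear_map}), and the diagram commutes on this open subset by construction. The problem thus reduces to showing that $A_K$ extends holomorphically across each $Y_j$.

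To produce the local extension I would use the explicit blowup model $F = F_1 \circ F_2^{-1} : \mathcal{H}^2 \to \mathbb{C}^2 / C_2$. By Lemma \ref{lemma_linear_map} there is a permutation $\sigma$ of $\{1, \dots, 16\}$ with $A_T(w_j) = w_{\sigma(j)}$. Using translated holomorphic coordinates centered at $w_j$ in the source and at $w_{\sigma(j)}$ in the target, the map $A_T$ takes the form $v \mapsto Av$; in both charts the $C_2$-action is $v \mapsto -v$, so the local model of $A_{\bar T}$ is $[v] \mapsto [Av]$ on $\mathbb{C}^2/C_2$. The matrix $A$ lifts naturally to $\tilde A_1 : \mathcal{H} \to \mathcal{H}$, $(v, l) \mapsto (Av, A(l))$, where $A$ acts on $\mathbb{CP}^1$ by its associated projective transformation, and one checks $F_1 \circ \tilde A_1 = A_{\bar T} \circ F_1$. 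Since $\tilde A_1$ commutes with the involution $(v,l) \mapsto (-v,l)$ by linearity of $A$, it descends through $F_2$ to a biholomorphism $\tilde A : \mathcal{H}^2 \to \mathcal{H}^2$, explicitly $\tilde A(u, l) = ((A \otimes A)\,u,\; A(l))$, which satisfies $F \circ \tilde A = A_{\bar T} \circ F$. This $\tilde A$ provides the desired holomorphic extension in a neighborhood of each $Y_j$.

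On the overlap of each local chart with $K \setminus \bigsqcup_j Y_j$, the identity $F \circ \tilde A = A_{\bar T} \circ F$ forces $\tilde A = P^{-1} \circ A_{\bar T} \circ P$ (using that $F$ restricted to $\mathcal{H}^2 \setminus \mathbb{CP}^1$ is a biholomorphism), so the local extensions patch into a global holomorphic $A_K : K \to K$ making the diagram in (1) commute. Applying the same construction to $A^{-1} \in \SL(2, \mathbb{Z}[\sqrt{-1}])$ yields the inverse of $A_K$, so $A_K$ is biholomorphic, establishing (1). For (2), each local $\tilde A$ sends the zero section $\mathbb{CP}^1 \subset \mathcal{H}^2$ over $[w_j]$ to the zero section over $[w_{\sigma(j)}]$ via the projective transformation induced by $A$, whence $A_K(Y_j) = Y_{\sigma(j)}$.

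The principal delicacy I expect is the descent of $\tilde A_1$ from $\mathcal{H}$ to $\mathcal{H}^2$, which rests on the $C_2$-equivariance guaranteed by linearity of $A$; once this is in hand the rest is careful bookkeeping of the 16 local blowup charts and a routine continuation argument on the punctured neighborhoods.
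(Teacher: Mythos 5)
Your proof is correct and follows essentially the same route as the paper, which simply asserts that part (1) ``follows immediately from the definition of the blowup map $P$'' and part (2) from Lemmas \ref{lemma_kummer} and \ref{lemma_linear_map}; you have merely written out in full the verification the paper leaves implicit, using its own explicit model $F = F_1 \circ F_2^{-1}$. The lift $\tilde A_1(v,l)=(Av,A(l))$, its $C_2$-equivariant descent to $\mathcal{H}^2$, and the patching with $P^{-1}\circ A_{\bar T}\circ P$ on the complement of the exceptional divisors are all sound.
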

\begin{proof}
Part (1) follows immediately from the definition of the blowup map $P$, and part (2) follows from Lemmas \ref{lemma_kummer} and \ref{lemma_linear_map}.
\end{proof}

\begin{prop}\label{proposition_kummer_symplectic_action}
Let $\varpi$ be the holomorphic symplectic form on $K$ defined in Remark \ref{holomorphic_symplectic_form}. Then,
\[
A_K^{*} \varpi = \varpi.
\]
\end{prop}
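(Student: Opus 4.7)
The plan is to prove the identity first on the covering space $\mathbb{C}^2$, descend through $\pi$ and $Q$ to the regular locus of $\bar{T}$, transport it via the biholomorphism $P|_{K \setminus \bigsqcup Y_j}$ to the complement of the exceptional divisors in $K$, and then extend by continuity to all of $K$.

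First, I would observe that on $\mathbb{C}^2$ with linear coordinates $(z_1, z_2)$, the condition $\det A = 1$ is exactly what makes $A^{*}(dz_1 \wedge dz_2) = dz_1 \wedge dz_2$. Indeed, writing $A = \bigl(\begin{smallmatrix} a & b \\ c & d \end{smallmatrix}\bigr)$ with $ad - bc = 1$, a direct computation gives $A^{*}(dz_1 \wedge dz_2) = (a\,dz_1 + c\,dz_2) \wedge (b\,dz_1 + d\,dz_2) = (ad - bc)\, dz_1 \wedge dz_2$.

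Next, since $\pi: \mathbb{C}^2 \to T$ is a local biholomorphism intertwining $A$ and $A_T$ by Lemma \ref{lemma_linear_map}, the form $dz_1 \wedge dz_2$ descends to the holomorphic symplectic form (still denoted $dz_1 \wedge dz_2$) on $T$ and satisfies $A_T^{*}(dz_1 \wedge dz_2) = dz_1 \wedge dz_2$. Under the double cover $Q: T \to \bar{T}$, which intertwines $A_T$ with $A_{\bar{T}}$, this identity passes to the regular locus $\bar T_{\reg}$; combined with the fact that $P|_{K \setminus \bigsqcup Y_j}$ is a biholomorphism onto $\bar{T}_{\reg}$ and intertwines $A_K$ with $A_{\bar{T}}$ by Proposition \ref{proposition_biholomorphic}(1), pulling back gives $A_K^{*}\varpi = \varpi$ on the open set $U := K \setminus \bigsqcup_{1 \le j \le 16} Y_j$.

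Finally, I would extend from $U$ to all of $K$. Both $\varpi$ and $A_K^{*}\varpi$ are globally defined holomorphic $2$-forms on the connected complex manifold $K$ (the former by construction in Remark \ref{holomorphic_symplectic_form}, the latter because $A_K$ is biholomorphic by Proposition \ref{proposition_biholomorphic}(1)). Since they agree on the dense open subset $U$, the holomorphic section $A_K^{*}\varpi - \varpi$ of the canonical bundle $\mathcal{K}_K$ vanishes on $U$ and hence, by the identity principle for holomorphic sections of a line bundle on a connected complex manifold, vanishes identically. This yields the desired equality on all of $K$. No serious obstacle is anticipated; the only point requiring care is the final extension step, which hinges on having $\varpi$ and $A_K^{*}\varpi$ genuinely defined as \emph{global} holomorphic forms on $K$ so that the identity principle applies.
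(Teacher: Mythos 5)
Your proposal is correct and follows essentially the same route as the paper: reduce to the complement of the exceptional divisors (where agreement of holomorphic forms on a dense open set extends to all of $K$), transport the identity to $T$ via $P$, $Q$ and $\pi$ using Proposition \ref{proposition_biholomorphic}(1) and Lemma \ref{lemma_linear_map}, and conclude from $A_T^{*}(dz_1\wedge dz_2)=\det(A)\,dz_1\wedge dz_2$ with $\det(A)=1$. You merely spell out the determinant computation and the identity-principle step that the paper leaves implicit.
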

\begin{proof}
Since $\varpi$ is a holomorphic form, it suffices to show the equality on $K\setminus \bigsqcup_{1\leq j\leq 16}Y_j$. By Lemma \ref{lemma_kummer}, Proposition \ref{proposition_biholomorphic} (1) and by the definition of $\varpi$, it further suffices to show the following equality on $T$,
\[
A_{T}^{*} (dz_{1} \wedge dz_{2}) = dz_{1} \wedge dz_{2}.
\]
Since $A_{T}^{*} (dz_{1} \wedge dz_{2}) = \det(A) dz_{1}\wedge dz_{2}$ and $\det (A) = 1$, the proposition follows.
\end{proof}

Recall that $A\in \SL(2, \mathbb{Z}\sqrt{-1})$. Considering $A$ as an action on $\mathbb{C}^{2} = \mathbb{R}^{4}$, we denote the corresponding element in $\GL(4, \mathbb{R})$ by $A_{\mathbb{R}}$. Using equation (\ref{lemma_1_k_homology_1}), we can give an explicit description of the action of $A_{K}^{*}$ on $H^2(K, \mathbb{R})$.
\begin{prop}\label{proposition_complex_action_homology}
The subspaces $\bigoplus_{1\leq j\leq 16} \mathbb{R} \langle \PD (Y_{j})$ and $H^{2} \left( K, \bigsqcup_{1\leq j\leq 16} Y_{j}; \mathbb{R} \right)$ in (\ref{lemma_1_k_homology_1}) are $A_{K}^{*}$-invariant subspaces of $H^2(K; \mathbb{R})$. Moreover, suppose $A_T(w_{j}) = w_{k}$ as in Lemma \ref{lemma_linear_map}, then $A_K^{*} (\PD (Y_{k})) = \PD (Y_{j})$. The action of $A_K^{*}$ on $H^{2} \left( K, \bigsqcup_{1\leq j\leq 16} Y_{j}; \mathbb{R} \right)$ is isomorphic to the induced action of $A_{\mathbb{R}}$ on $\wedge^{2} (4, \mathbb{R})$, where $\wedge^{2} (4, \mathbb{R})$ is the space of the skew-symmetric bilinear forms on $\mathbb{R}^{4}$.
\end{prop}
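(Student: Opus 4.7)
The plan is to derive all three assertions from the naturality of the constructions underlying the decomposition (\ref{lemma_1_k_homology_1}), combined with the commutative diagrams of Lemma \ref{lemma_linear_map} and Proposition \ref{proposition_biholomorphic}. The relative summand $H^{2}(K, \bigsqcup_{j} Y_{j}; \mathbb{R})$ is $A_{K}^{*}$-invariant for free, because by Proposition \ref{proposition_biholomorphic}(2) the biholomorphism $A_{K}$ sends $\bigsqcup_{j} Y_{j}$ to itself and hence is a self-map of the pair $(K, \bigsqcup_{j} Y_{j})$; the invariance of the first summand $\bigoplus_{j} \mathbb{R} \langle \PD(Y_{j}) \rangle$ will then follow once the explicit formula $A_{K}^{*}(\PD(Y_{k})) = \PD(Y_{j})$ is established, since the formula exhibits $A_{K}^{*}$ as a permutation of the basis $\{\PD(Y_{j})\}$.

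For that explicit formula, first I would combine Proposition \ref{proposition_biholomorphic}(1) with the identity $P^{-1}([w_{j}]) = Y_{j}$ from Lemma \ref{lemma_kummer} to conclude that $A_{K}(Y_{j}) = Y_{k}$ whenever $A_{T}(w_{j}) = w_{k}$. Then, since $A_{K}$ is a biholomorphism (and therefore an orientation-preserving diffeomorphism) while $Y_{j}, Y_{k}$ are complex, hence canonically oriented, closed submanifolds of $K$, the naturality of Poincar\'{e} duality under diffeomorphisms gives $A_{K}^{*}(\PD(Y_{k})) = \PD(A_{K}^{-1}(Y_{k})) = \PD(Y_{j})$, as required.

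For the identification of the action on $H^{2}(K, \bigsqcup_{j} Y_{j}; \mathbb{R})$ with $\wedge^{2}(4, \mathbb{R})$, I would transport the action of $A_{K}^{*}$ along the zigzag of isomorphisms in Lemma \ref{lemma_2_k_homology}. The commutativity $P \circ A_{K} = A_{\bar{T}} \circ P$ from Proposition \ref{proposition_biholomorphic}(1), together with $Q \circ A_{T} = A_{\bar{T}} \circ Q$ (immediate from Lemma \ref{lemma_linear_map} and the definition $\bar{T} = T/C_{2}$), forces both $P^{*}$ and $Q^{*}$ to intertwine the respective pullback actions. Consequently, the action of $A_{K}^{*}$ on $H^{2}(K, \bigsqcup_{j} Y_{j}; \mathbb{R})$ is linearly conjugate to the action of $A_{T}^{*}$ on $H^{2}(T; \mathbb{R})$. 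Since $T = \mathbb{R}^{4}/\Lambda$ is a flat torus and $A_{T}$ is induced by the $\mathbb{R}$-linear map $A_{\mathbb{R}} \in \GL(4, \mathbb{R})$, the standard identification of $H^{2}(T; \mathbb{R})$ with $\wedge^{2}(\mathbb{R}^{4})^{*}$ via translation-invariant $2$-forms turns $A_{T}^{*}$ exactly into the induced action of $A_{\mathbb{R}}$ on skew-symmetric bilinear forms on $\mathbb{R}^{4}$. The only real concern is keeping the variance straight through this chain of identifications; no conceptually difficult step is needed, as the whole argument is a naturality computation on the commutative diagrams already built.
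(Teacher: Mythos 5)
Your proposal is correct and follows essentially the same route as the paper's proof: invariance of both summands from $A_K$ being a self-map of the pair, the permutation formula $A_K^{*}(\PD(Y_k)) = \PD(Y_j)$ from orientation-preservation and naturality of Poincar\'e duality, and the identification with $\wedge^{2}(4,\mathbb{R})$ by transporting $A_K^{*}$ through the zigzag of Lemma \ref{lemma_2_k_homology} to $A_T^{*}$ on $H^{2}(T;\mathbb{R})$. No gaps.
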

\begin{proof}
By Proposition \ref{proposition_biholomorphic}, we know that $A_K (Y_{j}) = Y_{k}$ if $A_{T} (w_{j}) = w_{k}$. Obviously, $A_K$ preserves the orientations of all $Y_{j}$ and $K$. Thus, $A_K^{*} (\PD (Y_{k})) = \PD (Y_{j})$, and hence $\bigoplus_{1\leq j\leq 16} \mathbb{R} \langle \PD (Y_{j}) \rangle$ is $A_{K}^{*}$ invariant.

Since $A_K$ maps $\left( K, \bigsqcup_{1\leq j\leq 16} Y_{j} \right)$ to itself, $H^{2} \left( K, \bigsqcup_{1\leq j\leq 16} Y_{j}; \mathbb{R} \right)$ is also $A_K^{*}$ invariant. We also have the following commutative diagram,
\[
\xymatrix{
  H^{2} \left( K, \bigsqcup_{1\leq j\leq 16} Y_{j}; \mathbb{R} \right) \ar[d]_{A_K^{*}} & \ar[l]_-{P^{*}}  H^{2} \left( \bar{T}, \bigcup_{1\leq j\leq16} \{ [w_{j}] \}; \mathbb{R} \right) \ar[d]_{A_{\bar{T}}^{*}} \ar[r]^-{Q^{*}} & H^{2}(T; \mathbb{R}) \ar[d]^{A_T^{*}} \\
  H^{2} \left( K, \bigsqcup_{1\leq j\leq 16} Y_{j}; \mathbb{R} \right) & \ar[l]_-{P^{*}}  H^{2} \left( \bar{T}, \bigcup_{1\leq j\leq 16} \{ [w_{j}] \}; \mathbb{R} \right) \ar[r]^-{Q^{*}} & H^{2}(T; \mathbb{R}).   }
\]
By Lemma \ref{lemma_2_k_homology}, all horizontal maps are isomorphisms. Therefore, the action of $A_K^{*}$ on $H^{2} \left( K, \bigsqcup_{1\leq j\leq 16} Y_{j}; \mathbb{R} \right)$ is isomorphic to that of $A_T^{*}$ on $H^{2}(T; \mathbb{R})$.

Obviously, the action of $A_T^{*}$ on $H^{2}(T; \mathbb{R})$ is isomorphic to the action of $A_{\mathbb{R}}$ on $\wedge^{2} (4, \mathbb{R})$, which completes the proof.
\end{proof}

\begin{lemma}\label{lemma_gA_eigenvalue}
If as an element in $\SL(2, \mathbb{Z}\sqrt{-1})$, the matrix $A$ has eigenvalues $\lambda$ and $\lambda^{-1}$ (here we allow $\lambda=\lambda^{-1}$), then the set of eigenvalues of $A_K^{*}$ on $H^{2}(K; \mathbb{C})$ consists of $|\lambda|^{2}$, $|\lambda|^{-2}$, and some complex numbers with absolute value $1$.
\end{lemma}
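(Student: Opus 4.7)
The strategy is to exploit the decomposition of $H^{2}(K;\mathbb{R})$ provided by Proposition \ref{proposition_complex_action_homology} and reduce the problem to an explicit representation-theoretic computation on $\wedge^{2}\mathbb{R}^{4}$.

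First, I would complexify the decomposition (\ref{lemma_1_k_homology_1}) to get
\[
H^{2}(K;\mathbb{C}) \;=\; \bigoplus_{1\leq j\leq 16} \mathbb{C}\langle \PD(Y_{j}) \rangle \;\oplus\; H^{2}\bigl(K, \textstyle\bigsqcup_{1\leq j\leq 16} Y_{j}; \mathbb{C}\bigr),
\]
and observe, via Proposition \ref{proposition_complex_action_homology}, that both summands are $A_{K}^{*}$-invariant. On the $16$-dimensional summand $\bigoplus_{j}\mathbb{C}\langle \PD(Y_{j})\rangle$, the map $A_{K}^{*}$ permutes the basis $\{\PD(Y_{j})\}$, hence its matrix is a permutation matrix and all $16$ eigenvalues are roots of unity, in particular of absolute value $1$.

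The main work is therefore to analyze the action on the second summand, which by Proposition \ref{proposition_complex_action_homology} is isomorphic to the action of $A_{\mathbb{R}}$ on $\wedge^{2}(4,\mathbb{R})$. The key idea is to complexify and decompose $\mathbb{R}^{4}\otimes_{\mathbb{R}}\mathbb{C}$ according to the complex structure on $\mathbb{C}^{2}$: one has a canonical $\mathbb{C}$-linear splitting
\[
\mathbb{R}^{4}\otimes_{\mathbb{R}}\mathbb{C} \;\cong\; \mathbb{C}^{2}\oplus \overline{\mathbb{C}^{2}},
\]
where $A_{\mathbb{R}}$ acts as $A$ on the first factor (eigenvalues $\lambda,\lambda^{-1}$) and as $\bar{A}$ on the second (eigenvalues $\bar{\lambda},\bar{\lambda}^{-1}$). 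Applying $\wedge^{2}$ and using $\wedge^{2}(V\oplus W)=\wedge^{2}V\oplus(V\otimes W)\oplus\wedge^{2}W$, one gets
\[
\wedge^{2}(4,\mathbb{R})\otimes_{\mathbb{R}}\mathbb{C} \;\cong\; \wedge^{2}\mathbb{C}^{2} \;\oplus\; (\mathbb{C}^{2}\otimes\overline{\mathbb{C}^{2}}) \;\oplus\; \wedge^{2}\overline{\mathbb{C}^{2}}.
\]
The two $\wedge^{2}$ factors are one-dimensional and $A_{\mathbb{R}}$ acts on them by $\det A=1$ and $\overline{\det A}=1$ respectively, contributing two eigenvalues equal to $1$. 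On the four-dimensional middle factor the eigenvalues are the products $\lambda^{\epsilon_{1}}\bar{\lambda}^{\epsilon_{2}}$ with $\epsilon_{i}\in\{\pm 1\}$, namely $\lambda\bar{\lambda}=|\lambda|^{2}$, $\lambda^{-1}\bar{\lambda}^{-1}=|\lambda|^{-2}$, and $\lambda\bar{\lambda}^{-1}$, $\lambda^{-1}\bar{\lambda}$, the latter two manifestly having absolute value $1$.

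Combining: the six eigenvalues on $H^{2}(K,\bigsqcup Y_{j};\mathbb{C})$ are $|\lambda|^{2}$, $|\lambda|^{-2}$, and four of absolute value $1$, while the remaining $16$ eigenvalues of $A_{K}^{*}$ coming from the exceptional divisor summand are roots of unity. This gives precisely the claimed list. The only subtlety to watch is keeping the complex-linear identification $\mathbb{R}^{4}\otimes\mathbb{C}\cong\mathbb{C}^{2}\oplus\overline{\mathbb{C}^{2}}$ straight, so that the eigenvalues on the two factors are $\lambda^{\pm 1}$ and $\bar{\lambda}^{\pm 1}$ rather than all four together; once this is set up correctly, the rest is a mechanical exterior-algebra calculation.
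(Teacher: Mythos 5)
Your proposal is correct and follows essentially the same route as the paper: both use the decomposition of $H^{2}(K;\mathbb{C})$ into the span of the $\PD(Y_{j})$ (where $A_{K}^{*}$ acts by a permutation matrix) and the summand isomorphic to $\wedge^{2}(4,\mathbb{C})$, and both compute the eigenvalues there by splitting $\mathbb{R}^{4}\otimes\mathbb{C}$ into the $A$- and $\bar{A}$-eigenspaces $W\oplus\bar{W}$. Your version merely spells out the exterior-algebra step $\wedge^{2}(W\oplus\bar W)=\wedge^{2}W\oplus(W\otimes\bar W)\oplus\wedge^{2}\bar W$ a bit more explicitly than the paper does.
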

\begin{proof}
By (\ref{lemma_1_k_homology_1}) and Proposition \ref{proposition_complex_action_homology}, we know that
\[
H^{2}(K; \mathbb{C}) = \bigoplus_{1\leq j\leq 16} \mathbb{C} \langle \PD (Y_{j}) \rangle \oplus H^{2} \left( K, \bigsqcup_{1\leq j\leq 16} Y_{j}; \mathbb{C} \right),
\]
where $V_{1} = \bigoplus_{1\leq j\leq 16} \mathbb{C} \langle \PD (Y_{j}) \rangle$ and $V_{2} = H^{2} \left( K, \bigsqcup_{1\leq j\leq 16} Y_{j}; \mathbb{C} \right)$ are $A_K^{*}$ invariant subspaces. Moreover, $A_K^{*}$ permutes $\PD (Y_{j})$. So $A_K^{*}|_{V_{1}}$ is unitary and hence its eigenvalues all have absolute value $1$.

The action of $A_K^{*}$ on $V_{2}$ is isomorphic to the action of $A_{\mathbb{R}}$ on $\wedge^{2} (4, \mathbb{C})\cong \wedge^2(4, \mathbb{R})\otimes_{\mathbb{R}}\mathbb{C}$.
Since $A_{\mathbb{R}} \in \GL (4, \mathbb{R})$ is defined from $A \in \GL (2, \mathbb{C})$, it has two invariant subspaces $W$ and $\bar{W}$ as an action on $\mathbb{C}^4$, such that $\mathbb{C}^{4} = W \oplus \bar{W}$, $A_{\mathbb{R}}|_{W} \cong A$ and $A_{\mathbb{R}}|_{\bar{W}} \cong \bar{A}$. Here $\bar{W}$ is the complex conjugate of $W$ and $\bar{A} \in \GL(2, \mathbb{C})$ is the complex conjugate of $A$. Thus the action of $A_{\mathbb{R}}$ on $\mathbb{C}^{4}$ exactly has eigenvalues $\lambda$, $\lambda^{-1}$, $\bar{\lambda}$ and $\bar{\lambda}^{-1}$. By Proposition \ref{proposition_complex_action_homology}, $A_{K}^{*}|_{V_{2}}$ exactly has eigenvalues $|\lambda|^{2}$, $|\lambda|^{-2}$, $\lambda \bar{\lambda}^{-1}$, $\bar{\lambda} \lambda^{-1}$, and $1$. Thus, the lemma follows.
\end{proof}

\section{The Main Theorem}\label{section_main}
In this section, we will construct our examples of nonk\"{a}hler manifolds and prove the main Theorem \ref{theorem_complex-symplectic}. We will assume Theorem \ref{theorem_monodromy} and Proposition \ref{proposition_formal}, and postpone their proofs to later sections.

Throughout this section $A$ is a matrix in $\SL (2, \mathbb{Z}[\sqrt{-1}])$. There is no additional restriction on $A$ unless we state it explicitly.

Endow $S^{1} \times \mathbb{R}^{1}$ with the standard complex structure and the standard symplectic structure, such that $S^{1} \times [0,1]$ has the symplectic area $1$. By the complex structure and the symplectic structure (as in Proposition \ref{proposition_kummer_SCY}) of $K$, the manifold $S^{1} \times \mathbb{R}^{1} \times K$ has its product complex and symplectic structures.

For any $A_{K}$ in Proposition \ref{proposition_biholomorphic}, one can define a $\mathbb{Z}$ action on $S^{1} \times \mathbb{R}^{1} \times K$ by
\begin{eqnarray}\label{complex_action}
\mathbb{Z} \times S^{1} \times \mathbb{R}^{1} \times K  &  \rightarrow  &  S^{1} \times \mathbb{R}^{1} \times K \nonumber \\
(n, a, b, c)   &  \rightarrow  &  (a, b+n, A_{K}^{n}(c))
\end{eqnarray}

\begin{prop}\label{proposition_M(A)}
The quotient space
\begin{equation}\label{proposition_M(A)_1}
M(A) = (S^{1} \times \mathbb{R}^{1} \times K) / \mathbb{Z}
\end{equation}
is a compact smooth manifold of dimension $6$. It has a unique complex (resp. symplectic) structure such that the quotient map $S^{1} \times \mathbb{R}^{1} \times K \rightarrow M(A)$ is holomorphic (resp. symplectic). The above complex structure and symplectic structure of $M(A)$ are compatible with its smooth structure.
\end{prop}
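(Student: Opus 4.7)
The plan is to check that the $\mathbb{Z}$-action defined in (\ref{complex_action}) is free, properly discontinuous, and simultaneously by smooth diffeomorphisms, biholomorphisms, and symplectomorphisms of the product manifold $S^{1} \times \mathbb{R}^{1} \times K$. Once this is established, the standard theory of covering spaces together with the general principle that group-invariant geometric structures descend through free, properly discontinuous quotients will produce every claim in the statement, together with the uniqueness clauses.

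First I would verify freeness: if $(a, b+n, A_{K}^{n}(c)) = (a, b, c)$, then matching the $\mathbb{R}^{1}$-coordinate forces $n=0$. Proper discontinuity is equally immediate, since for any $(a,b,c)$ the open neighborhood $S^{1} \times (b - \tfrac{1}{2}, b + \tfrac{1}{2}) \times K$ meets at most one of its $\mathbb{Z}$-translates. Each generator acts smoothly because translation on $\mathbb{R}^{1}$ is smooth and $A_{K}$ is smooth by Proposition \ref{proposition_biholomorphic}. Hence $M(A)$ inherits a smooth manifold structure of dimension $1+1+4=6$ for which the quotient map is a local diffeomorphism. Compactness follows because the compact set $S^{1} \times [0,1] \times K$ surjects onto $M(A)$; here $K$ is compact as a resolution of the compact orbifold $\bar{T} = T/C_{2}$.

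Next I would promote the action to one by biholomorphisms. The map $(a,b) \mapsto (a, b+n)$ on the cylinder $S^{1} \times \mathbb{R}^{1}$ is a translation in its standard complex structure, and $A_{K}^{n}$ is biholomorphic by Proposition \ref{proposition_biholomorphic}; hence the product action is holomorphic, and the product complex structure descends uniquely to one on $M(A)$ making the quotient map holomorphic. An identical argument, using the translation-invariance of the standard symplectic form $da \wedge db$ on $S^{1} \times \mathbb{R}^{1}$ together with the identity $A_{K}^{*} \varpi = \varpi$ from Proposition \ref{proposition_kummer_symplectic_action} (which passes to $\re \varpi$), shows that the product symplectic form is $\mathbb{Z}$-invariant and descends uniquely to a symplectic form on $M(A)$ making the quotient map symplectic.

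Finally, since the product complex and symplectic structures on $S^{1} \times \mathbb{R}^{1} \times K$ are tautologically compatible with the product smooth structure, and since the quotient map is a local diffeomorphism that intertwines all three kinds of structure, the induced complex and symplectic structures on $M(A)$ are compatible with its smooth structure. There is no substantial obstacle in this proof: once one has Proposition \ref{proposition_biholomorphic} (biholomorphicity of $A_{K}$) and Proposition \ref{proposition_kummer_symplectic_action} (invariance of $\varpi$), the entire argument is a routine descent along a free, properly discontinuous, structure-preserving action.
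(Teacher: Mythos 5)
Your proposal is correct and follows essentially the same route as the paper: both arguments observe that the $\mathbb{Z}$-action is free, properly discontinuous, and preserves the smooth, complex, and symplectic structures (via Propositions \ref{proposition_biholomorphic} and \ref{proposition_kummer_symplectic_action}), so everything descends to the quotient. The only cosmetic difference is that you obtain compactness from the surjection of $S^{1}\times[0,1]\times K$ onto $M(A)$, whereas the paper notes that $M(A)$ is a fiber bundle over $S^{1}\times S^{1}$ with compact fiber $K$; both are immediate.
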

\begin{proof}
Since the $\mathbb{Z}$ action is smooth, proper and without fixed points, $M(A)$ is a smooth manifold. Notice that $M(A)$ is a fiber bundle over $S^1\times S^1$ with compact fiber $K$. Hence it is compact and has dimension $6$. Furthermore, the $\mathbb{Z}$ action is both holomorphic (Proposition \ref{proposition_biholomorphic}) and symplectic (Proposition \ref{proposition_kummer_symplectic_action}). Thus $M(A)$ has the desired complex and symplectic structures.
\end{proof}

\begin{rmk}
In fact, $M(A) = M(-A)$. This is because $A_{\bar{T}}= (-A)_{\bar{T}}$, and hence $A_{K}=  (-A)_{K}$.
\end{rmk}

Proposition \ref{proposition_M(A)} defines the manifold $M(A)$ in Theorem \ref{theorem_complex-symplectic}. Now, we need to verify that $M(A)$ satisfies all the properties in Theorem \ref{theorem_complex-symplectic} when $|\tr(A)|>2$.

In this section, we write $M_{C}(A)$ (or $M_{S}(A)$) when we want to emphasize the complex (or symplectic) structure on $M(A)$. Otherwise, we simply write $M(A)$.

\begin{prop}\label{proposition_holomorphic_CY}
The canonical line bundle $\mathcal{K}_{M_{C}(A)} = \bigwedge^{3} \left( T^{*} M_{C}(A) \right)$ is holomorphically trivial.
\end{prop}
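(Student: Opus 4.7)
The plan is to exhibit an explicit nowhere-vanishing holomorphic section of $\mathcal{K}_{M_C(A)}$, obtained by descending a product form from the cover $S^1 \times \mathbb{R}^1 \times K$.

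First, I would choose a nowhere-vanishing holomorphic $1$-form on $S^1 \times \mathbb{R}^1$ endowed with its standard complex structure. Viewing this surface as the quotient $\mathbb{C}/\mathbb{Z}$ with holomorphic coordinate $w = a + \sqrt{-1}\, b$, the form $dw$ is holomorphic, nowhere vanishing, and invariant under translations $w \mapsto w + n\sqrt{-1}$, which are exactly the $\mathbb{Z}$-action on the $S^1 \times \mathbb{R}^1$ factor in (\ref{complex_action}). On the $K$ factor, the holomorphic symplectic form $\varpi$ from Remark \ref{holomorphic_symplectic_form} is a nowhere-vanishing holomorphic $2$-form, and by Proposition \ref{proposition_kummer_symplectic_action} it satisfies $A_K^{*}\varpi = \varpi$.

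Next, consider the pulled-back $3$-form
\[
\Omega = \mathrm{pr}_{1}^{*}(dw) \wedge \mathrm{pr}_{2}^{*}(\varpi)
\]
on $S^{1} \times \mathbb{R}^{1} \times K$, where $\mathrm{pr}_1$ and $\mathrm{pr}_2$ are the projections onto the two factors. Since both $dw$ and $\varpi$ are holomorphic and nowhere vanishing on their respective factors, $\Omega$ is a nowhere-vanishing holomorphic section of the canonical bundle of the product complex manifold $S^{1} \times \mathbb{R}^{1} \times K$. The $\mathbb{Z}$-action in (\ref{complex_action}) pulls back $\mathrm{pr}_{1}^{*}(dw)$ to itself because translation in the $b$-direction preserves $dw$, and it pulls back $\mathrm{pr}_{2}^{*}(\varpi)$ to itself by Proposition \ref{proposition_kummer_symplectic_action}. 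Hence $\Omega$ is $\mathbb{Z}$-invariant.

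Finally, since the quotient map $q: S^{1} \times \mathbb{R}^{1} \times K \to M_C(A)$ is a holomorphic covering by Proposition \ref{proposition_M(A)} and the $\mathbb{Z}$-action is free and properly discontinuous, the invariant holomorphic form $\Omega$ descends to a well-defined holomorphic $3$-form $\widetilde{\Omega}$ on $M_C(A)$ with $q^{*}\widetilde{\Omega} = \Omega$. Because $\Omega$ is nowhere vanishing and $q$ is a local biholomorphism, $\widetilde{\Omega}$ is nowhere vanishing on $M_C(A)$. Thus $\widetilde{\Omega}$ trivializes $\mathcal{K}_{M_C(A)}$. There is no genuine obstacle here; the argument just combines the trivializations coming from the two factors with the equivariance provided by $A_K^{*}\varpi = \varpi$.
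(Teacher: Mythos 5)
Your proof is correct and is essentially the same as the paper's: both exhibit the nowhere-vanishing $\mathbb{Z}$-invariant holomorphic $3$-form $dz \wedge \varpi$ on $S^1 \times \mathbb{R}^1 \times K$ (using $A_K^*\varpi = \varpi$ from Proposition \ref{proposition_kummer_symplectic_action}) and descend it to a holomorphic frame of $\mathcal{K}_{M_C(A)}$. No differences worth noting.
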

\begin{proof}
It suffices to construct a holomorphic frame of $\mathcal{K}_{M_{C}(A)}$. Since $\mathcal{K}_{K}$ is holomorphically trivial, there exist holomorphic frames of $\mathcal{K}_{K}$, for example, the holomorphic symplectic form $\varpi$ in Proposition \ref{proposition_kummer_symplectic_action}. Furthermore, since $\mathbb{C}^{1}$ is a holomorphic covering of $S^{1} \times \mathbb{R}^{1}$, obviously, $dz$ is a holomorphic frame of $\mathcal{K}_{S^{1} \times \mathbb{R}^{1}}$, where $z$ is the standard coordinate of $\mathbb{C}^{1}$. Thus $dz \times \varpi$ is a holomorphic frame of $\mathcal{K}_{S^{1} \times \mathbb{R}^{1} \times K}$. By Proposition \ref{proposition_kummer_symplectic_action}, $dz \times \varpi$ is invariant under the $\mathbb{Z}$ action in (\ref{complex_action}). By (\ref{proposition_M(A)_1}), $dz \times \varpi$ descends to a desired holomorphic frame of $\mathcal{K}_{M_{C}(A)}$.
\end{proof}

\begin{prop}\label{proposition_symplectic_CY}
The symplectic manifold $M_{S}(A)$ is integrally Calabi-Yau. In other words, $c_{1} = 0$, where $c_{1} \in H^{2}(M_{S}(A); \mathbb{Z})$ is the first Chern class of $M_{S}(A)$ with respect to its symplectic structure.
\end{prop}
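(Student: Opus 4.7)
The plan is to adapt the proof of Proposition \ref{proposition_kummer_SCY}, reducing the structural group of the tangent bundle of $M_S(A)$ to a simply connected subgroup of $\Sp(6, \mathbb{R})$.

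First I would pass to the cover $\widetilde{M} = S^{1} \times \mathbb{R}^{1} \times K$ equipped with its product symplectic form $\omega_{0} + \re\varpi$, where $\omega_{0}$ is the area form on $S^{1} \times \mathbb{R}^{1}$. Under the natural splitting $T\widetilde{M} = T(S^{1} \times \mathbb{R}^{1}) \oplus TK$, the first summand is trivial, trivialized as a symplectic vector bundle by the translation-invariant coordinate vector fields, while the second summand has structural group reducible to $\Sp(2, \mathbb{C})$ by Proposition \ref{proposition_kummer_SCY}. Combining these, the structural group of $T\widetilde{M}$ as a symplectic vector bundle reduces to $\{\id\} \times \Sp(2, \mathbb{C}) \cong \Sp(2, \mathbb{C})$, viewed as a subgroup of $\Sp(2, \mathbb{R}) \times \Sp(4, \mathbb{R}) \subset \Sp(6, \mathbb{R})$.

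Next I would verify that this reduction is invariant under the generator $(a, b, c) \mapsto (a, b+1, A_{K}(c))$ of the $\mathbb{Z}$ action, so that it descends to $M_S(A)$. The differential of the deck transformation respects the product splitting. On $T(S^{1} \times \mathbb{R}^{1})$ it acts as the identity with respect to the canonical trivialization, since it is a translation in the $\mathbb{R}^{1}$ factor. On $TK$ it acts as $dA_{K}$, which is $\mathbb{C}$-linear because $A_{K}$ is biholomorphic (Proposition \ref{proposition_biholomorphic}) and preserves $\varpi$ by Proposition \ref{proposition_kummer_symplectic_action}, hence $dA_{K} \in \Sp(2, \mathbb{C})$. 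Therefore the reduction descends to a $\Sp(2, \mathbb{C})$-structure on $TM_S(A)$.

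Finally, since $\Sp(2, \mathbb{C}) \cong \SL(2, \mathbb{C})$ is simply connected, any vector bundle whose structural group reduces to it has vanishing first Chern class (for instance, on the sub-bundle $TK$ one obtains a rank-$2$ complex bundle with structure group in $\SL(2, \mathbb{C})$, hence trivial determinant, while the complementary trivial summand contributes nothing). Thus $c_{1}(M_S(A)) = 0$. The only non-routine step is the equivariance check in the previous paragraph, and that is immediate from the two propositions cited; I therefore do not expect any substantial obstacle beyond bookkeeping of the structural group reductions.
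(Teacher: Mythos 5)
Your proposal is correct and follows essentially the same route as the paper's proof: both split the tangent bundle of the cover into the horizontal summand $T(S^{1}\times\mathbb{R}^{1})$ and the vertical summand $TK$, reduce the vertical structure group to the simply connected group $\Sp(2,\mathbb{C})$ via the holomorphic symplectic form $\varpi$, and use invariance under the $\mathbb{Z}$-action (Propositions \ref{proposition_biholomorphic} and \ref{proposition_kummer_symplectic_action}) to descend the reduction to $M_{S}(A)$. The only cosmetic difference is in the horizontal summand, which the paper identifies with $p^{*}T(S^{1}\times S^{1})$ while you trivialize it equivariantly on the cover.
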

\begin{proof}
The tangent bundle of $S^{1} \times \mathbb{R}^{1} \times K$ splits symplectically into two subbundles
\[
T (S^{1} \times \mathbb{R}^{1} \times K) = \tilde{E}_{H} \oplus \tilde{E}_{V},
\]
where $\tilde{E}_{H} = T (S^{1} \times \mathbb{R}^{1}) \times K$ and $\tilde{E}_{V} = S^{1} \times \mathbb{R}^{1} \times TK$. Since the $\mathbb{Z}$ action in (\ref{complex_action}) preserves the above splitting, it descends to the following splitting
\[
T M_{S}(A) = E_{H} \oplus E_{V},
\]
where $E_{H}$ comes from $\tilde{E}_{H}$, and $E_{V}$ comes from $\tilde{E}_{V}$.

Let $\varpi$ be the holomorphic symplectic form in Proposition \ref{proposition_kummer_symplectic_action}. Clearly, the restriction of $\re (1 \times \varpi)$ on $\tilde{E}_{V}$ yields the symplectic structure of $\tilde{E}_{V}$. Therefore, the structure group of $\tilde{E}_{V}$ can be reduced from $\Sp (4, \mathbb{R})$ to $\Sp (2, \mathbb{C})$. Since the $\mathbb{Z}$ action preserves $1 \times \varpi$, this form descends to a form on $M_{S}(A)$. Thus the structure group of $E_{V}$ can be also reduced to $\Sp (2, \mathbb{C})$, which implies the first Chern class of $E_{V}$ vanishes.

On the other hand, $E_{H} = p^{*} T (S^{1} \times S^{1})$ as symplectic bundles, where $p: M_{S}(A) \rightarrow S^{1} \times S^{1}$ is the bundle projection. Since $S^{1} \times S^{1}$ is Calabi-Yau, the first Chern class of $E_{H}$ also vanishes.

In summary, the first Chern class of $M_{S}(A)$ vanishes.
\end{proof}

\begin{prop}\label{proposition_complex_Betti}
The fundamental group $\pi_{1} (M(A)) = \mathbb{Z} \oplus \mathbb{Z}$, and the odd-degree Betti numbers of $M(A)$ are even.
\end{prop}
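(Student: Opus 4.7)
The plan is to observe that the $\mathbb{Z}$-action in (\ref{complex_action}) fixes the $S^1$ factor pointwise, so $M(A)$ splits as a direct product $S^1 \times N(A)$, where $N(A) := (\mathbb{R} \times K)/\mathbb{Z}$ is the mapping torus of $A_K : K \to K$. Because $K$ is simply connected (Proposition \ref{proposition_kummer_1-connected}), the long exact sequence of homotopy groups for the fiber bundle $K \hookrightarrow N(A) \to S^1$ gives $\pi_1(N(A)) \cong \mathbb{Z}$, and hence $\pi_1(M(A)) \cong \pi_1(S^1) \times \pi_1(N(A)) \cong \mathbb{Z} \oplus \mathbb{Z}$.

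For the Betti numbers, I would compute $H^\ast(N(A);\mathbb{R})$ via the Wang sequence of the fibration $K \hookrightarrow N(A) \to S^1$:
\[
\cdots \to H^{j-1}(K;\mathbb{R}) \xrightarrow{A_K^\ast - \id} H^{j-1}(K;\mathbb{R}) \to H^j(N(A);\mathbb{R}) \to H^j(K;\mathbb{R}) \xrightarrow{A_K^\ast - \id} H^j(K;\mathbb{R}) \to \cdots .
\]
Since $H^{2j+1}(K;\mathbb{R}) = 0$ by Lemma \ref{lemma_kummer_odd_homology}, this sequence collapses to short exact pieces yielding $H^{2m}(N(A);\mathbb{R}) \cong \ker \bigl(A_K^\ast - \id \big|_{H^{2m}(K;\mathbb{R})}\bigr)$ and $H^{2m+1}(N(A);\mathbb{R}) \cong \operatorname{coker}\bigl(A_K^\ast - \id \big|_{H^{2m}(K;\mathbb{R})}\bigr)$. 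Since an endomorphism of a finite-dimensional real vector space has kernel and cokernel of equal dimension, $b_{2m+1}(N(A)) = b_{2m}(N(A))$ for every $m$.

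Finally, the K\"unneth formula applied to $M(A) = S^1 \times N(A)$ gives $b_j(M(A)) = b_j(N(A)) + b_{j-1}(N(A))$, so for odd $j = 2m+1$ one obtains
\[
b_{2m+1}(M(A)) = b_{2m+1}(N(A)) + b_{2m}(N(A)) = 2\,b_{2m}(N(A)),
\]
which is even. The argument is essentially routine; the only point that needs to be verified carefully is the splitting $M(A) = S^1 \times N(A)$, which is transparent from (\ref{complex_action}) since the $\mathbb{Z}$-action leaves the first coordinate pointwise fixed. Everything else is a direct combination of Wang, K\"unneth, and the vanishing of odd cohomology of the Kummer surface.
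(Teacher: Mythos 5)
Your proof is correct, and the Betti-number part takes a genuinely different route from the paper. For $\pi_1$, the paper runs the homotopy exact sequence of the single fibration $K \hookrightarrow M(A) \to S^1\times S^1$ rather than splitting off the $S^1$ factor first, but this is the same idea. For the odd Betti numbers, the paper argues much more softly: $b_1=2$ follows from $\pi_1\cong\mathbb{Z}\oplus\mathbb{Z}$, $b_5=2$ by Poincar\'e duality, and $b_3$ is even because the cup-product pairing $H^3\otimes H^3\to H^6\cong\mathbb{R}$ is non-degenerate and skew-symmetric --- a general fact valid for any closed orientable $6$-manifold, requiring no computation of $H^*(N;\mathbb{R})$ at all. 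Your argument instead computes every Betti number of the mapping torus $N$ via the Wang sequence, using $H^{\mathrm{odd}}(K;\mathbb{R})=0$ to split it into the pieces $H^{2m}(N)\cong\ker(A_K^*-\Id)$ and $H^{2m+1}(N)\cong\ckn(A_K^*-\Id)$ on $H^{2m}(K;\mathbb{R})$, then invokes rank--nullity and K\"unneth. This is more work but yields strictly more information --- it is essentially the computation the paper carries out anyway in Lemmas \ref{lemma_even_complex_homology}--\ref{lemma_complex_homology} --- and, like the paper's proof of this proposition, it needs no diagonalizability hypothesis on $A$, since equality of kernel and cokernel dimensions holds for any endomorphism of a finite-dimensional vector space. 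Both approaches are complete and correct.
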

\begin{proof}
Since $M(A)$ is a fiber bundle with base $S^{1} \times S^{1}$ and fiber $K$, we have the homotopy long exact sequence of fibrations
\[
\pi_{1}(K) \rightarrow \pi_{1} (M(A)) \rightarrow \pi_{1} (S^{1} \times S^{1}) \rightarrow \pi_{0} (K).
\]
According to Proposition \ref{proposition_kummer_1-connected}, $\pi_{1}(K) = 0$. Obviously $\pi_{0}(K)$ is a one-point set. Therefore, $\pi_{1} (M(A)) \cong \pi_{1} (S^{1} \times S^{1}) \cong \mathbb{Z} \oplus \mathbb{Z}$, and hence the first Betti number of $M(A)$ is $2$. Since $\dim (M(A)) = 6$, by Poincar\'{e} duality, the 5th Betti number is also $2$. It also follows from Poincar\'{e} duality that the cup product pairing
\[
H^{3} (M(A); \mathbb{R}) \otimes H^{3} (M(A); \mathbb{R}) \overset{\cup}{\longrightarrow} H^{6} (M(A); \mathbb{R}) \cong \mathbb{R}
\]
is non-degenerate. This pairing is skew-symmetric. Thus, the dimension of $H^{3} (M(A); \mathbb{R})$ is even.
\end{proof}

Suppose $F$ is a topological space and $\eta: F \rightarrow F$ is a homeomorphism. Consider the $\mathbb{Z}$ action on the topological space $\mathbb{R}^{1} \times F$ defined by
\begin{eqnarray}\label{manifold_N}
\mathbb{Z} \times \mathbb{R}^{1} \times F & \longrightarrow & \mathbb{R}^{1} \times F \\
(n, t, a) & \longrightarrow & (n+t, \eta^{n}(a)) \nonumber
\end{eqnarray}
The quotient space $N = (\mathbb{R}^{1} \times F)/ \mathbb{Z}$ is a fiber bundle over $S^{1}$ with fiber $F$. Denote by $p_{2}: N \rightarrow S^{1}$ the projection. Let $i: F \rightarrow N$ be the natural inclusion map to the fiber over $0 \in \mathbb{R}^{1} / \mathbb{Z} = S^{1}$.

\begin{lemma}\label{lemma_wang}
We have the following long exact sequence
\[
\longrightarrow H^{j}(N; \mathbb{R}) \overset{i^{*}}{\longrightarrow} H^{j}(F; \mathbb{R}) \overset{\eta^{*} - \Id}{\longrightarrow} H^{j}(F; \mathbb{R}) \overset{\delta}{\longrightarrow} H^{j+1}(N; \mathbb{R}) \longrightarrow,
\]
where $\Id$ is the identity map. If $\beta \in H^{j}(F; \mathbb{R})$ such that $i^{*} \alpha = \beta$ with $\alpha \in H^{j}(N; \mathbb{R})$, then $\delta (\beta) = p_{2}^{*}s \cup \alpha$. Here $s$ is the generator in $H^{1}(S^{1}; \mathbb{Z})$, which represents the positive orientation of $S^{1}$.
\end{lemma}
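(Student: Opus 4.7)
The plan is to recognize $N$ as the mapping torus of $\eta$ and derive the stated long exact sequence by Mayer--Vietoris applied to the pull-back of a two-arc cover of the base circle. The explicit description of the connecting homomorphism will then follow by carefully tracking representative cocycles.

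First I would choose a two-element open cover $\{U_1, U_2\}$ of $S^1 = \mathbb{R}/\mathbb{Z}$, say $U_1 = (-\tfrac{1}{4}, \tfrac{3}{4})$ and $U_2 = (\tfrac{1}{4}, \tfrac{5}{4}) \bmod 1$, whose intersection has two components. Pulling back through $p_2$ yields open sets $V_i = p_2^{-1}(U_i) \subset N$. Each $V_i$ deformation retracts onto the fiber over a basepoint in $U_i$, so $H^*(V_i; \mathbb{R}) \cong H^*(F; \mathbb{R})$. The intersection $V_1 \cap V_2$ has two components $W^+, W^-$, each deformation retracting onto a fiber. Crucially, under these identifications the inclusions $W^{\pm} \hookrightarrow V_1, V_2$ differ by the monodromy: on one component both inclusions pull back to the identity on $H^*(F; \mathbb{R})$, while on the other component one inclusion pulls back to the identity and the other to $\eta^*$, because the chosen homotopies cross the gluing locus.

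Next I would write out the Mayer--Vietoris sequence
\[
\cdots \longrightarrow H^j(N; \mathbb{R}) \longrightarrow H^j(V_1; \mathbb{R}) \oplus H^j(V_2; \mathbb{R}) \longrightarrow H^j(V_1 \cap V_2; \mathbb{R}) \longrightarrow H^{j+1}(N; \mathbb{R}) \longrightarrow \cdots
\]
and use the identifications above to rewrite the middle term as $H^j(F; \mathbb{R})^{\oplus 2} \to H^j(F; \mathbb{R})^{\oplus 2}$ via $(a,b) \mapsto (a-b,\, a - \eta^* b)$. A change of basis $(x,y) \mapsto (x, y - x)$ on the target splits off an isomorphism from the first summand to the first summand; the remaining information is an exact sequence whose middle map is precisely $\eta^* - \mathrm{Id}$. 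This produces the desired Wang-type six-term sequence.

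Finally, to verify $\delta(\beta) = p_2^* s \cup \alpha$, I would use de Rham representatives. Let $\rho$ be a smooth function on $S^1$ equal to $0$ on one component of $U_1 \cap U_2$ and to $1$ on the other, so that $d\rho$ is a closed $1$-form representing $s \in H^1(S^1; \mathbb{R})$. Given $\alpha \in H^j(N;\mathbb{R})$ with $i^*\alpha = \beta$, multiply $\alpha$ by the pull-back $p_2^*\rho$. The restrictions $(p_2^*\rho)\cdot\alpha|_{V_i}$ give Čech-de Rham lifts whose Mayer--Vietoris difference, by the standard recipe, represents $\delta(\beta)$ and equals $d(p_2^*\rho)\cup\alpha = p_2^* s \cup \alpha$. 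The principal obstacle is the orientation bookkeeping in Step~1: one must choose the order of the two arcs and the sign in Mayer--Vietoris consistently so that $\eta^* - \mathrm{Id}$ (rather than $\mathrm{Id} - \eta^*$) appears, and so that $\delta$ matches cup product with $+p_2^* s$; the formal computation above fixes these signs once and for all.
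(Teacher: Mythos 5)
Your plan is exactly the Mayer--Vietoris derivation of the $n=1$ Wang sequence for a mapping torus that the paper itself invokes and deliberately omits (it only cites Whitehead and Dimca and remarks that the $S^1$ case follows similarly from Mayer--Vietoris), so the approach matches and the argument is correct, including the reduction of the difference map $(a,b)\mapsto(a-b,\,a-\eta^{*}b)$ to $\eta^{*}-\Id$ and the identification of $\delta$ via the standard \v{C}ech--de Rham recipe. One small fix in the last step: a smooth function $\rho$ defined on all of $S^1$ with the stated properties cannot exist (and any global $\rho$ would give $[d\rho]=0$), so you should instead take $\rho$ on the universal cover $\mathbb{R}$ with $\rho(t+1)=\rho(t)+1$ (equivalently, use the partition-of-unity function on a single arc and keep track of which component of $U_1\cap U_2$ carries the jump); then $d\rho$ descends to a representative of $s$ and the computation $\delta(\beta)=[d(p_2^{*}\rho)\wedge\tilde\alpha]=p_2^{*}s\cup\alpha$ goes through as you describe.
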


Lemma \ref{lemma_wang} is a special case of the Wang sequence which studies fibrations over spheres $S^{n}$ (see (1.9) in \cite[p.\ 319]{G.Whitehead} and the proposition 6.4.8 in \cite[p.\ 212]{dimca}). In the literature, the Wang sequence is usually proved in the case of $n>1$. Nevertheless, the proof of the case $n=1$ using the Mayer-Vietoris sequence is similar. So we skip the proof here.

Taking $F = K$ and $\eta = A_{K}$ in (\ref{manifold_N}), we get a fiber bundle $N = (\mathbb{R}^{1} \times K) / \mathbb{Z}$ over $S^{1}$ with fiber $K$. By definition, $M(A) = S^{1} \times N$.

\begin{lemma}\label{lemma_even_complex_homology}
The homomorphism $i^{*}: H^{2j}(N; \mathbb{R}) \rightarrow H^{2j}(K; \mathbb{R})$ is an isomorphism when $j=0$, $2$. When $j=1$, $i^{*}$ is injective, and its image $\im i^{*} = V$, where $V$ is the set of fixed elements of $A_{K}^{*}$ in $H^{2}(K; \mathbb{R})$.
\end{lemma}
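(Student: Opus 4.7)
The proof will be a direct application of the Wang sequence from Lemma \ref{lemma_wang} with $F = K$ and $\eta = A_K$, combined with the vanishing of the odd cohomology of $K$ (Lemma \ref{lemma_kummer_odd_homology}) and the fact that $A_K^{*}$ acts trivially on $H^{0}(K;\mathbb{R})$ and on $H^{4}(K;\mathbb{R})$.

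The plan is as follows. First, I would write down the relevant portion of the Wang sequence
\[
\cdots \to H^{j}(N;\mathbb{R}) \xrightarrow{i^{*}} H^{j}(K;\mathbb{R}) \xrightarrow{A_{K}^{*}-\Id} H^{j}(K;\mathbb{R}) \xrightarrow{\delta} H^{j+1}(N;\mathbb{R}) \to \cdots
\]
for $j = 0,1,2,3,4$. Lemma \ref{lemma_kummer_odd_homology} gives $H^{1}(K;\mathbb{R}) = H^{3}(K;\mathbb{R}) = 0$, so the sequence around even degrees splits into the shorter pieces
\[
0 \to H^{2j}(N;\mathbb{R}) \xrightarrow{i^{*}} H^{2j}(K;\mathbb{R}) \xrightarrow{A_{K}^{*}-\Id} H^{2j}(K;\mathbb{R}) \to H^{2j+1}(N;\mathbb{R}) \to 0
\]
for $j = 0,1,2$. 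In particular $i^{*}$ is injective in these degrees, and its image equals the kernel of $A_{K}^{*}-\Id$, i.e.\ the fixed subspace of $A_{K}^{*}$ acting on $H^{2j}(K;\mathbb{R})$.

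Next I would observe that for $j=0$ the space $H^{0}(K;\mathbb{R}) \cong \mathbb{R}$ is pointwise fixed by any self-homeomorphism, so $A_{K}^{*}-\Id = 0$ and $i^{*}$ is an isomorphism. For $j=1$ the fixed subspace is by definition $V$, giving the stated description of $\im i^{*}$. For $j=2$ I would use that $K$ is a connected oriented closed $4$-manifold, so $H^{4}(K;\mathbb{R}) \cong \mathbb{R}$ is generated by the fundamental class; since $A_{K}$ is a biholomorphism it is orientation preserving, hence acts as the identity on $H^{4}(K;\mathbb{R})$, and again $A_{K}^{*}-\Id = 0$, so $i^{*}: H^{4}(N;\mathbb{R}) \to H^{4}(K;\mathbb{R})$ is an isomorphism.

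There is no real obstacle here, since everything reduces to plugging known facts into the Wang sequence; the only step that requires a moment's care is noting that $A_{K}$ preserves orientation on $K$, which is immediate from its biholomorphicity. All three claims of the lemma follow at once from this case analysis.
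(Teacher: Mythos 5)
Your proof is correct and follows essentially the same route as the paper: plug $F=K$, $\eta=A_K$ into the Wang sequence of Lemma \ref{lemma_wang}, use Lemma \ref{lemma_kummer_odd_homology} to kill the odd-degree terms, and identify $\im i^{*}$ with the fixed subspace of $A_K^{*}$, which is all of $H^{2j}(K;\mathbb{R})$ for $j=0,2$. The only difference is that you spell out the orientation-preservation argument for the $j=2$ case, which the paper leaves implicit.
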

\begin{proof}
By Lemma \ref{lemma_wang}, we have the following Wang sequence
\[
H^{2j-1}(K; \mathbb{R}) \overset{\delta}{\longrightarrow} H^{2j}(N; \mathbb{R}) \overset{i^{*}}{\longrightarrow} H^{2j}(K; \mathbb{R}) \overset{A_{K}^{*} - \Id}{\longrightarrow} H^{2j}(K; \mathbb{R}).
\]
By Lemma \ref{lemma_kummer_odd_homology}, $H^{2j-1}(K; \mathbb{R}) = 0$. Therefore, $i^{*}$ is injective and its image is the subspace of $H^{2j}(K; \mathbb{R})$ fixed by $A_{K}^{*}$.

For $j=0$, $2$, the whole space $H^{2j}(K; \mathbb{R})$ is fixed by $A_{K}^{*}$. Thus $i^{*}$ is an isomorphism for $j=0$, $2$.
\end{proof}

\begin{lemma}\label{lemma_diagoanl_complex}
If $A$ is diagonalizable, then $A_{K}^{*}$ is diagonalizable on $H^{2}(K; \mathbb{C})$.
\end{lemma}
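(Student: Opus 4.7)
The plan is to exploit the $A_K^*$-invariant decomposition $H^2(K;\mathbb{C}) = V_1 \oplus V_2$ provided by Lemma \ref{lemma_1_k_homology} and Proposition \ref{proposition_complex_action_homology}, where $V_1 = \bigoplus_{j=1}^{16} \mathbb{C}\langle \PD(Y_j)\rangle$ and $V_2 = H^2\bigl(K, \bigsqcup_{1\leq j\leq 16} Y_j; \mathbb{C}\bigr)$, and prove diagonalizability on each summand separately.

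For $V_1$, I would recall from Proposition \ref{proposition_complex_action_homology} that $A_K^*$ permutes the basis $\{\PD(Y_j)\}_{j=1}^{16}$ according to the permutation of the 2-torsion points $\{w_j\}$ induced by $A_T$. In this basis $A_K^*|_{V_1}$ is therefore a permutation matrix; since any permutation decomposes into disjoint cycles and the matrix of a $k$-cycle has the $k$-th roots of unity as its distinct eigenvalues, $A_K^*|_{V_1}$ is diagonalizable with no assumption on $A$ at all.

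For $V_2$, I would use Proposition \ref{proposition_complex_action_homology} again to identify the action of $A_K^*$ on $V_2$ with the action of $A_{\mathbb{R}}$ on $\wedge^2(4,\mathbb{R})\otimes_{\mathbb{R}} \mathbb{C} \cong \wedge^2 \mathbb{C}^4$. As in the proof of Lemma \ref{lemma_gA_eigenvalue}, $\mathbb{C}^4$ splits as $W\oplus \bar{W}$ with $A_{\mathbb{R}}|_W\cong A$ and $A_{\mathbb{R}}|_{\bar{W}}\cong \bar{A}$. If $A$ is diagonalizable, so is $\bar{A}$, hence $A_{\mathbb{R}}$ on $\mathbb{C}^4$ is diagonalizable. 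Choosing an eigenbasis $v_1,\dots,v_4$ of $A_{\mathbb{R}}$ with eigenvalues $\mu_1,\dots,\mu_4$, the wedges $\{v_i\wedge v_j\}_{i<j}$ form an eigenbasis of $\wedge^2 A_{\mathbb{R}}$ with eigenvalues $\mu_i\mu_j$, proving $A_K^*|_{V_2}$ is diagonalizable.

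Combining the two summands, $A_K^*$ is diagonalizable on $H^2(K;\mathbb{C})$. I do not anticipate any real obstacle: the only step that one might worry about is the assertion that permutation matrices are diagonalizable over $\mathbb{C}$, but this is standard, so the proof is essentially a routine assembly of facts already established in Section~\ref{section_kummer}.
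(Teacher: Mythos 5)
Your proposal is correct and follows essentially the same route as the paper: the same invariant decomposition $V_1\oplus V_2$ from Proposition \ref{proposition_complex_action_homology}, with $A_K^*|_{V_1}$ diagonalizable because it permutes the basis $\{\PD(Y_j)\}$ and $A_K^*|_{V_2}$ diagonalizable because it is the second exterior power of the diagonalizable $A_{\mathbb{R}}$. You merely spell out the two standard linear-algebra facts (cycle decomposition of permutation matrices, and that an eigenbasis of $\mathbb{C}^4$ wedges to one of $\wedge^2\mathbb{C}^4$) that the paper leaves implicit.
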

\begin{proof}
By Proposition \ref{proposition_complex_action_homology}, it suffices to show that the restrictions of $A_{K}^{*}$ to the direct summands $\bigoplus_{1\leq j\leq 16} \mathbb{C} \langle \PD (Y_{j}) \rangle$ and $H^{2} \left( K, \bigsqcup_{1\leq j\leq 16} Y_{j}; \mathbb{C} \right)$ are both diagonalizable. By Proposition \ref{proposition_complex_action_homology}, $A_{K}^{*}$ permutes these $\PD (Y_{j})$. Thus the restriction of $A_{K}^{*}$ on $\bigoplus_{1\leq j\leq 16} \mathbb{R} \langle \PD (Y_{j})$ is unitary and hence diagonalizable. The restriction of $A_{K}^{*}$ on $H^{2} \left( K, \bigsqcup_{1\leq j\leq 16} Y_{j}; \mathbb{C} \right)$ is isomorphic to the induced action of $A_{\mathbb{R}}$ on $\wedge^{2}(4, \mathbb{C})$, and hence diagonalizable too.
\end{proof}

\begin{lemma}\label{lemma_odd_complex_homology}
Let $s \in H^{1}(S^{1}; \mathbb{Z})$ be the generator corresponding to the positive orientation. Denote by $s_{2} = p_{2}^{*} s \in H^{1} (N; \mathbb{Z})$.

When $j=0 \text{ or } 2$, the map
\begin{equation}\label{lemma_odd_complex_homology_1}
\xymatrix{
  H^{2j}(N; \mathbb{R}) \ar[r]^-{s_{2} \cup} & H^{2j+1}(N; \mathbb{R})   }
\end{equation}
is an isomorphism. Here $s_{2} \cup$ is the map defined by taking cup product with $s_{2}$.

Assume additionally $A$ is diagonalizable. Then (\ref{lemma_odd_complex_homology_1}) is also an isomorphism for $j=1$.
\end{lemma}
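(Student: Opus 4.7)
The plan is to factor the cup product map $s_2 \cup \colon H^{2j}(N; \mathbb{R}) \to H^{2j+1}(N; \mathbb{R})$ as a composition $\delta \circ i^*$ via the Wang sequence of Lemma~\ref{lemma_wang}, and then reduce the isomorphism question to a direct-sum decomposition of $H^{2j}(K; \mathbb{R})$ under the action of $A_K^*$.

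First, the formula $\delta(\beta) = s_2 \cup \alpha$ in Lemma~\ref{lemma_wang}, applied with $\beta = i^*\alpha$, identifies the map $s_2 \cup$ with the composition $\delta \circ i^*$. By Lemma~\ref{lemma_kummer_odd_homology} we have $H^{2j+1}(K; \mathbb{R}) = 0$, so the Wang sequence shows that $\delta \colon H^{2j}(K; \mathbb{R}) \to H^{2j+1}(N; \mathbb{R})$ is surjective with kernel $\im(A_K^* - \Id)$. Combined with Lemma~\ref{lemma_even_complex_homology}, which says that $i^*$ is injective with image $V_{2j} := \ker(A_K^* - \Id)$, this reduces the lemma to showing
\[
H^{2j}(K; \mathbb{R}) = \ker(A_K^* - \Id) \oplus \im(A_K^* - \Id),
\]
since $s_2 \cup = \delta \circ i^*$ is an isomorphism precisely when $V_{2j}$ is a complement of $\ker \delta = \im(A_K^* - \Id)$ in $H^{2j}(K; \mathbb{R})$.

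Next I handle the three cases separately. For $j = 0$, $A_K^*$ acts trivially on $H^0(K; \mathbb{R}) \cong \mathbb{R}$, so the decomposition is immediate. For $j = 2$, $A_K^*$ acts as $+1$ on $H^4(K; \mathbb{R}) \cong \mathbb{R}$ because $A_K$ is biholomorphic and therefore orientation-preserving, so the decomposition is again trivial. For $j = 1$ with $A$ diagonalizable, Lemma~\ref{lemma_diagoanl_complex} asserts that $A_K^*$ is diagonalizable on $H^2(K; \mathbb{C})$; writing $H^2(K; \mathbb{C})$ as the direct sum of eigenspaces, the image of $A_K^* - \Id$ is the sum of the eigenspaces with eigenvalue distinct from $1$, so $H^2(K; \mathbb{C}) = \ker(A_K^* - \Id) \oplus \im(A_K^* - \Id)$, and this decomposition descends to $\mathbb{R}$-coefficients since both summands are complexifications of their real counterparts.

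I expect no serious obstacle; the proof is essentially a bookkeeping exercise in the Wang sequence once one observes that the cup product with $s_2$ is precisely $\delta \circ i^*$. The only minor subtlety is noting that complex diagonalizability of $A_K^*$ yields the desired real decomposition, which is routine.
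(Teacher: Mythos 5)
Your proof is correct and follows essentially the same route as the paper: factor $s_2\cup$ as $\delta\circ i^*$ via the Wang sequence, use $H^{2j+1}(K;\mathbb{R})=0$ and Lemma~\ref{lemma_even_complex_homology} to reduce everything to the decomposition $H^{2j}(K;\mathbb{R})=\ker(A_K^*-\Id)\oplus\im(A_K^*-\Id)$, which is trivial for $j=0,2$ and follows from Lemma~\ref{lemma_diagoanl_complex} for $j=1$. The only cosmetic difference is that you state the reduction to the kernel--image complement explicitly before splitting into cases, whereas the paper interleaves the two; the mathematical content is identical.
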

\begin{proof}
By Lemma \ref{lemma_wang}, we have the following Wang sequence
\[
H^{2j}(K; \mathbb{R}) \overset{A_{K}^{*} - \Id}{\longrightarrow} H^{2j}(K; \mathbb{R}) \overset{\delta}{\longrightarrow} H^{2j+1}(N; \mathbb{R}) \overset{i^{*}}{\longrightarrow} H^{2j+1}(K; \mathbb{R}).
\]
By Lemma \ref{lemma_kummer_odd_homology}, $H^{2j+1}(K; \mathbb{R})=0$, and hence $\delta$ is surjective.

Clearly, $A_{K}^{*} - \Id =0$ on $H^{2j}(K; \mathbb{R})$, when $j=0 \text{ or } 2$. If additionally $A$ is diagonalizable, by Lemma \ref{lemma_diagoanl_complex}, then $A_{K}^{*} - \Id$ is diagonalizable on $H^{2}(K; \mathbb{C})$. This assumption further implies that, on $H^{2}(K; \mathbb{R})$, the kernel of $A_{K}^{*} - \Id$ is a complement of the image of $A_{K}^{*} - \Id$.

Thus, when $j=0 \text{ or } 2$, the map
\[
\delta: \ker [(A_{K}^{*} - \Id)|_{H^{2j}(K; \mathbb{R})}] \longrightarrow H^{2j+1}(N; \mathbb{R})
\]
is an isomorphism. When $j=1$ and when $A$ is diagonalizable, $\delta$ is also an isomorphism.

On the other hand, by Lemma \ref{lemma_even_complex_homology},
\[
i^{*}: H^{2j}(N; \mathbb{R}) \longrightarrow \ker [(A_{K}^{*} - \Id)|_{H^{2j}(K; \mathbb{R})}]
\]
is an isomorphism. Combining the above two isomorphisms,
\[
\delta \circ i^{*}: H^{2j}(N; \mathbb{R}) \longrightarrow H^{2j+1}(N; \mathbb{R})
\]
is also an isomorphism. By Lemma \ref{lemma_wang}, $\delta \circ i^{*} (\alpha) = s_{2} \cup \alpha$, which completes the proof.
\end{proof}

By Lemmas \ref{lemma_even_complex_homology}, \ref{lemma_odd_complex_homology} and the K\"{u}nneth formula, we obtain the following description of $H^{*}(M(A); \mathbb{R})$.

\begin{lemma}\label{lemma_complex_homology}
Let $s_{1} \in H^{1}(S^{1}; \mathbb{Z})$ and $[K]^{*}\in H^{4}(K; \mathbb{Z})$ be the generators corresponding to the positive orientations.

For $M(A) = S^{1} \times N$, we have
\begin{eqnarray*}
H^{0}(M(A); \mathbb{R}) & = & \mathbb{R}, \\
H^{1}(M(A); \mathbb{R}) & = & \mathbb{R} \langle s_{1} \times 1 \rangle \oplus \mathbb{R} \langle 1 \times s_{2} \rangle, \\
H^{2}(M(A); \mathbb{R}) & = & \mathbb{R} \langle s_{1} \times s_{2} \rangle \oplus (1 \times V), \\
H^{5}(M(A); \mathbb{R}) & = & \mathbb{R} \langle s_{1} \times [K]^{*} \rangle \oplus \mathbb{R} \langle 1 \times (s_{2} \cup [K]^{*}) \rangle, \\
H^{6}(M(A); \mathbb{R}) & = & \mathbb{R} \langle s_{1} \times (s_{2} \cup [K]^{*}) \rangle.
\end{eqnarray*}

Assume additionally that $A$ is diagonalizable. Then
\begin{eqnarray*}
H^{3}(M(A); \mathbb{R}) & = & (s_{1} \times V) \oplus [1 \times (s_{2} \cup V)], \\
H^{4}(M(A); \mathbb{R}) & = & [s_{1} \times (s_{2} \cup V)] \oplus \mathbb{R} \langle 1 \times [K]^{*} \rangle.
\end{eqnarray*}

Here $s_{2} \in H^{1} (N; \mathbb{Z})$ is the element defined in Lemma \ref{lemma_odd_complex_homology}, and $V$ is the subspace of $H^{2}(K; \mathbb{R})$ defined in Lemma \ref{lemma_even_complex_homology}. We identify $H^{2}(N; \mathbb{R})$ with $V$ via the isomorphism $i^{*}: H^{2}(N; \mathbb{R}) \rightarrow V$, and we also consider $[K]^{*}$ as an element in $H^{4}(N; \mathbb{R})$ via the isomorphism $i^{*}: H^{4}(N; \mathbb{R}) \rightarrow H^{4}(K; \mathbb{R})$.
\end{lemma}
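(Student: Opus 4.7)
My plan is to compute $H^*(M(A); \mathbb{R})$ directly from the Künneth decomposition
\[
H^{k}(M(A); \mathbb{R}) \;=\; \bigoplus_{p+q=k} H^{p}(S^{1}; \mathbb{R}) \otimes H^{q}(N; \mathbb{R}),
\]
so the whole task reduces to writing down $H^{q}(N; \mathbb{R})$ in terms of the generators used in the statement, and then assembling the pieces degree by degree.

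First I would use Lemma \ref{lemma_even_complex_homology} to identify $H^{2j}(N; \mathbb{R})$ via $i^{*}$ for $j=0,1,2$: the map is an isomorphism onto $H^{0}(K;\mathbb{R}) \cong \mathbb{R}$ for $j=0$, onto the subspace $V$ of $A_{K}^{*}$-fixed classes in $H^{2}(K;\mathbb{R})$ for $j=1$, and onto $H^{4}(K;\mathbb{R}) = \mathbb{R}\langle [K]^{*}\rangle$ for $j=2$ (since $A_{K}$ preserves the orientation and is holomorphic). Next I would use Lemma \ref{lemma_odd_complex_homology} to identify the odd cohomology: cupping with $s_{2}$ gives an isomorphism $H^{2j}(N;\mathbb{R}) \xrightarrow{\;s_{2}\cup\;} H^{2j+1}(N;\mathbb{R})$ unconditionally for $j=0,2$, so $H^{1}(N;\mathbb{R}) = \mathbb{R}\langle s_{2}\rangle$ and $H^{5}(N;\mathbb{R}) = \mathbb{R}\langle s_{2}\cup [K]^{*}\rangle$; under the additional hypothesis that $A$ is diagonalizable, Lemma \ref{lemma_odd_complex_homology} also gives $H^{3}(N;\mathbb{R}) = s_{2}\cup V$.

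Finally I would plug these into the Künneth decomposition, using $H^{0}(S^{1};\mathbb{R}) = \mathbb{R}\langle 1 \rangle$, $H^{1}(S^{1};\mathbb{R}) = \mathbb{R}\langle s_{1}\rangle$, and $H^{p}(S^{1};\mathbb{R}) = 0$ for $p\geq 2$. The five unconditional identities (degrees $0,1,2,5,6$) use only the $j=0$ and $j=2$ parts of Lemma \ref{lemma_odd_complex_homology}, while the two identities in degrees $3$ and $4$ pick up the diagonalizability hypothesis precisely because they require $H^{3}(N;\mathbb{R}) = s_{2}\cup V$, which is the $j=1$ case of Lemma \ref{lemma_odd_complex_homology}.

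The proof is essentially a bookkeeping exercise with no real obstacle; the only point deserving care is making sure the external products $s_{1}\times(\cdot)$, $1\times(\cdot)$ are interpreted consistently with the stated identifications $H^{2}(N;\mathbb{R}) \cong V$ and $H^{4}(N;\mathbb{R}) \cong \mathbb{R}\langle [K]^{*}\rangle$ coming from $i^{*}$, and that the classes $s_{2}\cup V$ and $s_{2}\cup [K]^{*}$ are genuinely independent generators (which follows from the isomorphism statements of Lemma \ref{lemma_odd_complex_homology}, and, in the case involving $V$, from the diagonalizability hypothesis on $A$).
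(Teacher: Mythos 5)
Your proposal is correct and is essentially the paper's own argument: the authors likewise deduce the lemma directly from Lemma \ref{lemma_even_complex_homology}, Lemma \ref{lemma_odd_complex_homology} and the K\"unneth formula, with the diagonalizability hypothesis entering exactly where you place it (the $j=1$ case needed for $H^{3}(N;\mathbb{R})$, which feeds into the degree $3$ and $4$ descriptions). The bookkeeping in your degree-by-degree assembly matches the stated decompositions.
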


Now we study the hard Lefschetz property of $M_{S}(A)$.
\begin{lemma}\label{lemma_symplectic_class}
Let $[\omega]\in H^{2}(M_{S}(A); \mathbb{R})$ be the cohomology class represented by the symplectic form $\omega$ of $M_S(A)$. Then
\[
[\omega]= s_{1} \times s_{2} + 1 \times \theta
\]
where $\theta \in V$ and $\theta^{2} = d [K]^{*}$ with $d>0$. Here $s_{1}$, $s_{2}$, $[K]^{*}$ and $V$ are as in Lemma \ref{lemma_complex_homology}.

\end{lemma}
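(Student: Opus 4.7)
The plan is to compute $[\omega]$ by lifting to the universal cover and applying the Künneth decomposition of $H^*(M(A); \mathbb{R}) = H^*(S^1) \otimes H^*(N)$ recorded in Lemma~\ref{lemma_complex_homology}. By construction the symplectic form on $M_S(A)$ is induced from the product form $\tilde{\omega} = da \wedge db + \re \varpi$ on $S^1 \times \mathbb{R}^1 \times K$, where $a, b$ are the coordinates on $S^1$ and $\mathbb{R}^1$ and $da \wedge db$ assigns symplectic area one to $S^1 \times [0,1]$. The Künneth formula then forces $[\omega] = c\,(s_1 \times s_2) + 1 \times \theta$ for some $c \in \mathbb{R}$ and $\theta \in H^2(N;\mathbb{R})$, and my task becomes to pin down $c$, show $\theta \in V$, and control $\theta^2$.

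To obtain $c = 1$, I would integrate $\omega$ over the 2-cycle $S^1 \times \gamma$, where $\gamma \subset N$ is the image of a smooth path $t \mapsto (t, c(t))$ in $\mathbb{R}^1 \times K$ from $(0, c_0)$ to $(1, A_K(c_0))$. The endpoints are identified by the $\mathbb{Z}$-action, so $\gamma$ is a closed loop, and it projects with degree one under $p_2\colon N \to S^1$, giving $\int_\gamma s_2 = 1$. When $\tilde{\omega}$ is restricted to $S^1 \times \{(t, c(t))\}$, the $\re \varpi$ term vanishes because it is pulled back along the one-parameter family $c(t) \subset K$, while $\int_{S^1 \times [0,1]} da \wedge db = 1$. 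On the Künneth side, only the $s_1 \times s_2$ component contributes to integration over $S^1 \times \gamma$, so $c = 1$.

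To identify $\theta$, restrict $\omega$ to a fiber $K$ via $i\colon K \hookrightarrow M(A)$; by construction $i^*\omega = \re \varpi$. By Proposition~\ref{proposition_kummer_symplectic_action} the class $[\re \varpi]$ is $A_K^*$-invariant and hence lies in $V$. Under the identification $H^2(N;\mathbb{R}) \overset{i_N^*}{\cong} V$ of Lemma~\ref{lemma_even_complex_homology}, this forces $\theta = [\re \varpi] \in V$.

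Finally, the equality $\theta^2 = d [K]^*$ with $d > 0$ reduces via the isomorphism $i^*\colon H^4(N;\mathbb{R}) \to H^4(K;\mathbb{R})$ to showing $\int_K (\re \varpi)^2 > 0$. Since $\varpi$ is a holomorphic $(2,0)$-form on a complex surface, $\varpi \wedge \varpi = 0$ by type, so splitting $\varpi = \re\varpi + i\,\im\varpi$ yields $(\re\varpi)^2 = (\im\varpi)^2$ and $\re\varpi \wedge \im\varpi = 0$; consequently $(\re\varpi)^2 = \tfrac{1}{2}\,\varpi \wedge \bar\varpi$. Locally $\varpi = f\,dz_1 \wedge dz_2$ with $f$ a nowhere-vanishing holomorphic function (nondegeneracy), and a direct computation gives $\varpi \wedge \bar\varpi = 4|f|^2\,dx_1 \wedge dy_1 \wedge dx_2 \wedge dy_2$, a strictly positive multiple of the volume form pointwise. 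Hence $d = \int_K (\re\varpi)^2 > 0$. The only mildly delicate step is arranging the 2-cycle so that the integration isolates the $s_1 \times s_2$ component cleanly; everything else follows mechanically from the product description of $\tilde{\omega}$.
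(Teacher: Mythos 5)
Your proof is correct and follows essentially the same route as the paper: both decompose the descended product form into the base part $p^*\omega_1$ (giving $s_1\times s_2$) and the fiberwise part $\re\varpi$ (giving $\theta\in V$), and both reduce $\theta^2=d[K]^*$ to the fact that $(\re\varpi)^2$ is a positive volume form on $K$. The only differences are cosmetic: you pin down the coefficient of $s_1\times s_2$ by integrating over an explicit $2$-cycle where the paper simply normalizes the area of the base torus, and you supply the type-decomposition computation $(\re\varpi)^2=\tfrac12\varpi\wedge\bar\varpi>0$ that the paper asserts without proof.
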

\begin{proof}
Putting together the standard symplectic form $\omega_{1}$ on $S^{1} \times \mathbb{R}^{1}$ and the $\re \varpi$ on $K$, we obtain a product symplectic form on $S^{1} \times \mathbb{R}^{1} \times K$. This symplectic form descends to the symplectic form $\omega$ on $M_S(A)$. The form $\omega_{1}$ also descends to a symplectic form on $S^{1} \times S^{1}$, which we also denote by $\omega_{1}$. Then, $\int_{S^{1} \times S^{1}} \omega_{1} = 1$.

Clearly, $1 \times \re \varpi$ is a closed form on $\mathbb{R}^{1} \times K$. By Proposition \ref{proposition_kummer_symplectic_action}, $1 \times \re \varpi$ descends to a closed form $\omega_{2}$ on $N$. Then
\[
\omega = p^{*} \omega_{1} + 1 \times \omega_{2},
\]
where $p: M_{S}(A) \rightarrow S^{1} \times S^{1}$ is the projection. This implies $[\omega] = s_{1} \times s_{2} + 1 \times \theta$, where $\theta = [\omega_{2}] \in H^{2}(N; \mathbb{R})$, and $H^{2}(N; \mathbb{R})$ is identified with the $V$ in Lemma \ref{lemma_complex_homology}.

Since $(\re \varpi)^{2}$ is a volume form on $K$, we have $[\re \varpi]^{2} = d [K]^{*} \in H^{4}(K; \mathbb{R})$ with $d>0$. Furthermore, the restriction of $\omega_{2}$ on a fiber $K$ is equal to $\re \varpi$. Therefore, $\theta^{2} = d [K]^{*}$, where $H^{4}(N; \mathbb{R})$ is identified with $H^{4}(K; \mathbb{R})$ by Lemma \ref{lemma_complex_homology}.
\end{proof}

\begin{prop}\label{proposition_symplectic_Lefschetz}
Assume $A$ is diagonalizable. Then, for any $j$,
\[
L_{[\omega]}^{j}: H^{3-j}(M_{S}(A); \mathbb{R}) \overset{[\omega]^{j} \cup}{\longrightarrow} H^{3+j}(M_{S}(A); \mathbb{R})
\]
is an isomorphism, where $[\omega] \in H^{2}(M_{S}(A); \mathbb{R})$ is the cohomology class represented by the symplectic form $\omega$ of $M_{S}(A)$.
\end{prop}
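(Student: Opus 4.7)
The plan is to pick explicit bases of $H^{*}(M_{S}(A); \mathbb{R})$ from Lemma~\ref{lemma_complex_homology} and to verify invertibility of $L^{j}_{[\omega]}$ for $j = 0, 1, 2, 3$ by direct computation. By Lemma~\ref{lemma_symplectic_class}, write $[\omega] = s_{1} \times s_{2} + 1 \times \theta$ with $\theta \in V$ and $\theta^{2} = d \cdot [K]^{*}$ for some $d > 0$. Throughout we use the identities $s_{1}^{2} = 0$ (as $s_{1} \in H^{1}(S^{1})$), $s_{2}^{2} = 0$ (as $s_{2} = p_{2}^{*} s$ with $s^{2} = 0$), the vanishing $[K]^{*} \cup \theta \in H^{6}(N) = 0$, and the graded commutativity of the cup product.

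The case $j = 0$ is trivial. For $j = 3$ a direct expansion using these identities yields $[\omega]^{3} = 3d \cdot s_{1} \times (s_{2} \cup [K]^{*})$, a nonzero multiple of the generator of $H^{6}(M_{S}(A); \mathbb{R})$. For $j = 2$ one first computes $[\omega]^{2} = 2\, s_{1} \times (s_{2} \cup \theta) + d \cdot (1 \times [K]^{*})$; cupping this with the basis vectors $s_{1} \times 1$ and $1 \times s_{2}$ of $H^{1}$, the first summand always contributes zero (because of $s_{1}^{2} = 0$ or $s_{2}^{2} = 0$), while the second summand sends each basis vector of $H^{1}$ to $d$ times the corresponding basis vector of $H^{5}$. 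Hence $L^{2}_{[\omega]} = d \cdot \mathrm{Id}$ in these bases.

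The substantive case is $j = 1$. Both $H^{2}(M_{S}(A); \mathbb{R})$ and $H^{4}(M_{S}(A); \mathbb{R})$ have dimension $1 + \dim V$. Choose a basis $v_{1}, \dots, v_{n}$ of $V$, write $\theta = \sum_{i} \theta_{i} v_{i}$, and define a linear functional $\eta \colon V \to \mathbb{R}$ by $\theta \cup v = \eta(v) \cdot [K]^{*}$, so that $\eta(\theta) = d$. In the bases $\{s_{1} \times s_{2}\} \cup \{1 \times v_{i}\}$ of $H^{2}$ and $\{1 \times [K]^{*}\} \cup \{s_{1} \times (s_{2} \cup v_{i})\}$ of $H^{4}$ the matrix of $L_{[\omega]}$ becomes
\[
\begin{pmatrix}
0 & \eta(v_{1}) & \cdots & \eta(v_{n}) \\
\theta_{1} & 1 & & \\
\vdots & & \ddots & \\
\theta_{n} & & & 1
\end{pmatrix},
\]
whose determinant (by subtracting $\eta(v_{i})$ times the $(i+1)$-st row from the first row) equals $-\sum_{i} \eta(v_{i}) \theta_{i} = -\eta(\theta) = -d \neq 0$.

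The only step requiring genuine care is $j = 1$: one must correctly identify the off-diagonal interaction between the two K\"{u}nneth summands of $H^{2}$ and then exploit $\eta(\theta) = d \neq 0$ (which is exactly the nondegeneracy $\theta^{2} \neq 0$) to see that the matrix is nonsingular. The remaining cases reduce immediately to the relations $s_{1}^{2} = s_{2}^{2} = 0$ and $\theta^{2} = d\,[K]^{*}$.
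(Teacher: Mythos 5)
Your proof is correct and follows essentially the same route as the paper: expand $[\omega]=s_{1}\times s_{2}+1\times\theta$ using Lemmas \ref{lemma_symplectic_class} and \ref{lemma_complex_homology} and verify each $j$ by direct computation. The only difference is cosmetic: for $j=1$ you compute a determinant in an arbitrary basis of $V$, whereas the paper checks injectivity via the adapted splitting $V=\mathbb{R}\langle\theta\rangle\oplus V_{2}$ with $V_{2}=\{\gamma\in V\mid\theta\cup\gamma=0\}$; both arguments turn on the same nondegeneracy $\theta^{2}=d\,[K]^{*}\neq 0$.
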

\begin{proof}
Since $A$ is diagonalizable, we can use all the conclusions of Lemmas \ref{lemma_odd_complex_homology} and \ref{lemma_complex_homology}.

The statement is trivial if $j\neq 1$, $2$ or $3$. Moreover, it suffices to show that $L_{[\omega]}^{j}$ is injective, because $\dim (H^{3-j}) = \dim (H^{3+j})$.  By Lemma \ref{lemma_symplectic_class}, we have $[\omega]= s_{1} \times s_{2} + 1 \times \theta$. Note that $s_{1}^{2} = 0$, $s_{2}^{2} = 0$, $\theta^{3} = 0$, and $\theta^{2} = d[K]^{*}$ with $d>0$.

When $j=3$, since $\omega^{3}$ is a volume form of $M_{S}(A)$, $L_{[\omega]}^{3}$ is an isomorphism.

When $j=2$,
\[
[\omega]^{2} = 2 s_{1} \times (s_{2} \cup \theta) + 1 \times \theta^{2} = 2 s_{1} \times (s_{2} \cup \theta) + d \times [K]^{*}.
\]
By Lemma \ref{lemma_complex_homology}, for any $\beta \in H^{1}(M_{S}(A); \mathbb{R})$, it is of the form $\beta = s_{1} \times a + b \times s_{2}$, with $a,b \in \mathbb{R}$. Then
\[
L_{[\omega]}^{2} (\beta) = ad s_{1} \times [K]^{*} + bd \times (s_{2} \cup [K]^{*}).
\]
By Lemma \ref{lemma_odd_complex_homology}, $s_{2} \cup [K]^{*} \neq 0$. By Lemma \ref{lemma_complex_homology}, $s_1\times [K]^{*}$ and $1\times (s_2\cup [K]^{*})$ are linearly independent in $H^5(K; \mathbb{R})$. Since $d \neq 0$, if $L_{[\omega]}^{2} (\beta) = 0$, then $a=b=0$ and $\beta = 0$. Therefore, $L_{[\omega]}^{2}$ is injective.

When $j=1$, note that: (i) $\theta \in V$; (ii) $\theta \cup V \subseteq H^{4}(K; \mathbb{R})$; (iii) $\theta^{2} \neq 0$; and (iv) $\dim \left( H^{4}(K; \mathbb{R}) \right) = 1$. We infer that $V = \mathbb{R} \langle \theta \rangle \oplus V_{2}$, where $V_{2} = \{ \gamma \in V \mid \theta \cup \gamma = 0 \}$. By Lemma \ref{lemma_complex_homology}, for any $\beta \in H^{2}(M_{S}(A); \mathbb{R})$, it is of the form $\beta = a s_{1} \times s_{2} + b \times \theta + 1 \times \gamma$, where $a,b \in \mathbb{R}$, and $\gamma \in V_{2}$. Since $\theta^2=d[K]^*$ and $\theta\cup\gamma=0$, we have
\begin{eqnarray*}
&   & L_{[\omega]}^{1} (\beta) \\
& = & (a+b) s_{1} \times (s_{2} \cup \theta) + s_{1} \times (s_{2} \cup \gamma) + b \times \theta^{2} + 1 \times (\theta \cup \gamma) \\
& = & (a+b) s_{1} \times (s_{2} \cup \theta) + s_{1} \times (s_{2} \cup \gamma) + bd \times [K]^{*}.
\end{eqnarray*}

Since $\theta \neq 0$ and by Lemma \ref{lemma_odd_complex_homology}, we have $s_{2} \cup \theta \neq 0$. By Lemma \ref{lemma_complex_homology}, $s_{1} \times (s_{2} \cup \theta)$ and $1\times [K]^{*}$ are linearly independent in $H^4(K;\mathbb{R})$. If $\gamma\neq 0$, then $s_{1} \times (s_{2} \cup \theta)$, $s_{1} \times (s_{2} \cup \gamma)$ and $1\times [K]^{*}$ are also linearly independent. Recall that $d\neq 0$. Suppose $L_{[\omega]}^{1} (\beta) = 0$. Then $a+b=0$, $b=0$ and $\gamma=0$, and hence $\beta =0$. Therefore, $L_{[\omega]}^{1}$ is injective.
\end{proof}

Now we are ready to prove Theorem \ref{theorem_complex-symplectic}.
\begin{proof}[Proof of Theorem \ref{theorem_complex-symplectic}]
Since $|\tr(A)|>2$, we know that $A$ has two distinct eigenvalues $\lambda$ and $\lambda^{-1}$ with $|\lambda|>1$. Therefore, $A$ is diagonalizable.

Take $M_{C} = M_{C}(A)$ and $M_{S} = M_{S}(A)$. Then the properties (1) - (7) follow from Propositions \ref{proposition_M(A)}, \ref{proposition_holomorphic_CY}, \ref{proposition_symplectic_CY}, \ref{proposition_complex_Betti}, \ref{proposition_symplectic_Lefschetz}, and \ref{proposition_formal}.

We prove (8) by contradiction. Suppose $M(A) \times Y$ is homotopy equivalent to a compact K\"{a}hler manifold $X$.

Since $H^{*}(M(A); \mathbb{C})$ is finite dimensional, we have the K\"{u}nneth formula
\[
H^{k}(X; \mathbb{C}) \cong \bigoplus_{i+j=k} H^{i}(M(A); \mathbb{C}) \otimes H^{j}(Y; \mathbb{C}).
\]
Since $H^{*} (X; \mathbb{C})$ is finite dimensional, so is $H^{*}(Y; \mathbb{C})$. As the argument can be applied on each path component of $Y$, we may assume that $Y$ is path-connected.

Recall that $M(A) = S^{1} \times N$ and $N$ is a fiber bundle over $S^{1}$ with fiber a Kummer surface $K$. Then $M(A) \times Y$ is a fiber bundle over $S^{1}$ with path-connected fiber $S^{1} \times K \times Y$. By the K\"{u}nneth formula, $H^{*}(S^{1} \times K \times Y; \mathbb{C})$ are finite dimensional. Hence we can apply Theorem \ref{theorem_monodromy} to $\phi: M(A) \times Y \rightarrow S^{1}$, where $\phi$ is the composition of the projection $M(A)\times Y\to M(A)$ and the bundle map $M(A) \rightarrow S^1$. By (\ref{complex_action}), there is a monodromy map
\[
\Id \times A_{K} \times \Id: S^{1} \times K \times Y \rightarrow S^{1} \times K \times Y
\]
which yields a monodromy action on $H^{*}(S^{1} \times K \times Y; \mathbb{C})$, where $\Id$ are the identity maps. By the K\"{u}nneth formula again, $H^{2}(S^{1} \times K \times Y; \mathbb{C})$ has an $(\Id \times A_{K} \times \Id)^{*}$ invariant subspace $H^{0}(S^{1}; \mathbb{C}) \otimes H^{2}(K; \mathbb{C}) \otimes H^{0}(Y; \mathbb{C})$, on which $(\Id \times A_{K} \times \Id)^{*} = 1 \otimes A_{K}^{*} \otimes 1$. Since $A$ has an eigenvalue $\lambda$, by Lemma \ref{lemma_gA_eigenvalue}, $(\Id \times A_{K} \times \Id)^{*}$ has a real eigenvalue $|\lambda|^{2}$ on $H^{2}(S^{1} \times K \times Y; \mathbb{C})$. Therefore, the monodromy action of the fiber bundle $\phi: M(A) \times Y \rightarrow S^{1}$ has an eigenvalue $|\lambda|^2$. Since $|\lambda|>1$, we have $|\lambda|^2>1$.

However, we have assumed that $M(A) \times Y$ is homotopy equivalent to a compact K\"{a}hler manifold. By Theorem \ref{theorem_monodromy}, the eigenvalues of this monodromy action have to be roots of unity. This is a contradiction to the fact that $|\lambda|^2>1$.
\end{proof}

\begin{ex}\label{example_complex-symplectic}
There is a large family of $A \in \SL(2, \mathbb{Z}[\sqrt{-1}])$ satisfying the assumption of Theorem \ref{theorem_complex-symplectic}. For instance, we can choose $A$ to be
\[
\begin{bmatrix}
1 + \sqrt{-1} & \sqrt{-1} \\
1 & 1
\end{bmatrix},
\qquad
\begin{bmatrix}
2 & -1 + 4 \sqrt{-1} \\
1 & 2\sqrt{-1}
\end{bmatrix}
\qquad \text{or} \qquad
\begin{bmatrix}
1 & n-2 \\
1 & n-1
\end{bmatrix}
\]
for any integer $n>2$.
\end{ex}

\section{Formality}\label{section_formality}
In this section, we prove the following proposition which is Theorem \ref{theorem_complex-symplectic} (7).

\begin{prop}\label{proposition_formal}
The real homotopy type of $M(A)$ in Theorem \ref{theorem_complex-symplectic} is formal.
\end{prop}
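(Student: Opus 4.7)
The plan is to reduce formality of $M(A)$ to an $A_{K}^{*}$-equivariant version of the Deligne--Griffiths--Morgan--Sullivan (DGMS) formality theorem applied to the Kummer surface $K$. First, I would observe that $M(A) = S^{1} \times N$, where $N = (\mathbb{R} \times K)/\mathbb{Z}$ is the mapping torus of $A_{K}$ used in Lemma \ref{lemma_wang}. Since $S^{1}$ is formal and formality is preserved under products with formal spaces of finite type, it suffices to prove that $N$ is formal.

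Since $K$ is compact K\"ahler, DGMS furnishes a zig-zag of CDGA quasi-isomorphisms
\[
(H^{*}(K; \mathbb{R}), 0) \xleftarrow{\sim} (\ker d^{c}, d) \xhookrightarrow{\sim} (\mathcal{A}^{*}_{dR}(K), d),
\]
where $d^{c} = J^{-1} d J$ is the conjugate differential. Because $A_{K}$ is biholomorphic (Proposition \ref{proposition_biholomorphic}), the pullback $A_{K}^{*}$ commutes with both $d$ and $d^{c}$, so every term above is $A_{K}^{*}$-stable and the arrows are $A_{K}^{*}$-equivariant. The plan is then to propagate this equivariant zig-zag through a mapping torus construction.

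For any CDGA $C$ equipped with a CDGA automorphism $\phi$, I would define the mapping torus CDGA $\mathrm{MT}(C, \phi)$ as the subalgebra of $\mathcal{A}^{*}(\mathbb{R}) \otimes C$ fixed under the diagonal $\mathbb{Z}$-action (unit translation on $\mathbb{R}$ coupled with $\phi^{-1}$ on $C$); when $(C, \phi) = (\mathcal{A}^{*}_{dR}(K), A_{K}^{*})$ this recovers $\mathcal{A}^{*}_{dR}(N)$, and in general the cohomology of $\mathrm{MT}(C, \phi)$ fits into a Wang-type long exact sequence analogous to Lemma \ref{lemma_wang}. Via the five-lemma, any equivariant CDGA quasi-isomorphism $f \colon C_{1} \to C_{2}$ induces a quasi-isomorphism $\mathrm{MT}(f)$, so the equivariant DGMS zig-zag yields
\[
\mathcal{A}^{*}_{dR}(N) \xleftarrow{\sim} \mathrm{MT}(\ker d^{c}, A_{K}^{*}) \xrightarrow{\sim} \mathrm{MT}(H^{*}(K; \mathbb{R}), A_{K}^{*}).
\]
Finally, setting $V_{*} := \ker\bigl((A_{K}^{*} - \Id)|_{H^{*}(K; \mathbb{R})}\bigr)$, the fact that $A_{K}^{*}$ is a ring homomorphism makes $V_{*}$ a sub-CDGA of $H^{*}(K; \mathbb{R})$ with zero differential, and the natural inclusion $V_{*} \otimes H^{*}(S^{1}; \mathbb{R}) \hookrightarrow \mathrm{MT}(H^{*}(K; \mathbb{R}), A_{K}^{*})$ is a CDGA map whose source and target both compute $H^{*}(N; \mathbb{R})$ (compare Lemma \ref{lemma_complex_homology}) and is therefore a quasi-isomorphism. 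Chaining everything would give a zig-zag of CDGA quasi-isomorphisms from $\mathcal{A}^{*}_{dR}(N)$ to a CDGA with zero differential, so $N$, and hence $M(A)$, is formal.

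The main technical hurdle will be setting up the mapping torus CDGA construction rigorously and verifying that it preserves equivariant quasi-isomorphisms via the Wang-type sequence and the five-lemma; once this functoriality is in place, the rest is bookkeeping using the cohomology already computed in Section \ref{section_main}.
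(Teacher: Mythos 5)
Your argument is correct in outline but takes a genuinely different route from the paper's. The paper also reduces to the mapping torus $N$, but then invokes the Fern\'andez--Mu\~noz criterion (Theorem \ref{theorem_FM-(n-1)-formal}): since $\dim N=5$ it suffices to check $2$-formality, which is done by writing down the degree $\leq 2$ part of a minimal model of $\Omega N$ explicitly and observing that all generators in degrees $1$ and $2$ may be taken closed; the key input is Lemma \ref{lemma_odd_complex_homology}, i.e.\ that $s_2\cup\colon H^{2}(N;\mathbb{R})\to H^{3}(N;\mathbb{R})$ is surjective, so no non-closed generators are needed in degree $2$. You instead build an explicit zig-zag of CDGA quasi-isomorphisms from $\Omega N$ to a CDGA with zero differential by pushing the $A_K^*$-equivariant DGMS zig-zag for $K$ through an algebraic mapping-torus construction. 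Two points deserve emphasis. First, both proofs hinge on the same nontrivial fact, the semisimplicity of $A_K^*$ on $H^{2}(K;\mathbb{C})$ (Lemma \ref{lemma_diagoanl_complex}, coming from $|\tr A|>2$): your final inclusion $V_*\otimes H^{*}(S^1;\mathbb{R})\hookrightarrow \mathrm{MT}(H^{*}(K;\mathbb{R}),A_K^*)$ is a quasi-isomorphism precisely because $\ker(A_K^*-\Id)$ is a complement of $\mathrm{im}(A_K^*-\Id)$, and it must fail when the monodromy has a nontrivial unipotent part (the Kodaira--Thurston manifold is the mapping torus of $T^3$ with unipotent monodromy and is not formal), so this hypothesis should be stated explicitly rather than hidden in the reference to Lemma \ref{lemma_complex_homology}. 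Second, the technical hurdle you flag is real but surmountable: $\mathcal{A}^{*}(\mathbb{R})\otimes C$ is only a quasi-isomorphic subalgebra of the forms on the product, taking $\mathbb{Z}$-invariants does not obviously commute with passage to cohomology, and for $C=\ker d^{c}$ the coefficients are infinite-dimensional, so the Wang sequence for $\mathrm{MT}(C,\phi)$ must be established by hand (for instance by realizing $\mathrm{MT}(\ker d^{c},A_K^*)$ as the sub-CDGA of $\Omega N$ of forms whose fiberwise components are $d^{c}_K$-closed and filtering by $dt$-degree). What your route buys is independence from Theorem \ref{theorem_FM-(n-1)-formal} and a statement that generalizes to mapping tori of compact K\"ahler manifolds by biholomorphisms acting semisimply on cohomology; what the paper's route buys is brevity, since the Fern\'andez--Mu\~noz theorem absorbs all the work above degree $2$.
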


Let us first recall the definition of the real homotopy type of a manifold being formal.

A commutative differential graded algebra (CDGA) over a field $\mathbb{K}$ of characteristic $0$, denoted by $(\mathcal{A}, d)$, is a graded algebra $\mathcal{A} = \bigoplus_{k \geq 0} \mathcal{A}^{k}$ with a differential $d$ which satisfies the following conditions:
\begin{enumerate}[(1)]
\item for all $x \in \mathcal{A}^{k}$ and $y \in \mathcal{A}^{l}$, we have $xy = (-1)^{kl} yx$;

\item the differential $d: \mathcal{A} \rightarrow \mathcal{A}$ has degree $1$, i.e. for any $k$, we have $d (\mathcal{A}^{k}) \subseteq \mathcal{A}^{k+1}$, and $d^2=0$;

\item for all $x \in \mathcal{A}^{k}$ and $y \in \mathcal{A}$, we have $d(xy) = dx \cdot y + (-1)^{k} x \cdot dy$.
\end{enumerate}

If $x \in \mathcal{A}^{k}$, we say $x$ is a homogenous element with degree $k$, and denote by $\deg (x) =k$.

With differential $d$, a CDGA $(\mathcal{A}, d)$ is naturally a complex of $\mathbb{K}$-vector spaces. Hence we can define its cohomology groups $H^{i}(\mathcal{A}, d)$. Notice that with trivial differential, $(H^{*}(\mathcal{A}, d), 0)$ has a natural CDGA structure. A homomorphism of CDGA is a homomorphism of graded algebra which is also a map of complexes.

\begin{defn}\label{definition_minimal}
A CDGA $(\mathcal{M}, d)$ is called minimal if it satisfies the following conditions:
\begin{enumerate}[(1)]
\item it is a free commutative graded algebra;

\item there exists a collection of homogeneous free generators $\{ a_{\tau} \mid \tau \in \mathcal{I} \}$, for some well ordered index set $\mathcal{I}$, such that $\deg (a_{\mu}) \leq \deg (a_{\tau})$ if $\mu, \tau \in \mathcal{I}$ and $\mu < \tau$. Moreover, $d a_{\tau}$ is expressed in terms of finitely many $a_{\mu}$ with $\mu < \tau$.
\end{enumerate}
\end{defn}

Here a free commutative graded algebra is the tensor product of the polynomial algebra generated by its free generators of even degrees and the exterior algebra generated by its free generators of odd degrees. For more discussions about minimal CDGAs, see \cite[p.\ 14]{Halperin} and \cite[p.\ 152]{Fernandez_Mumoz}.

\begin{defn}\label{definition_minimal_model}
Suppose $\mathcal{A}$ is a CDGA and $\mathcal{M}$ is a minimal CDGA. If there is a homomorphism of CDGA $\varphi: \mathcal{M} \rightarrow \mathcal{A}$ which induces isomorphisms $\varphi^{*}: H^{*}(\mathcal{M}) \rightarrow H^{*}(\mathcal{A})$ of cohomology groups, then $\mathcal{M}$ is a minimal model of $\mathcal{A}$.
\end{defn}

A CDGA $\mathcal{A}$ over $\mathbb{K}$ is called connected if $H^{0}(\mathcal{A}) = \mathbb{K}$. (Here, $\mathbb{K} \subseteq \mathcal{A}^{0}$ as a subalgebra and the differential $d$ vanishes on $\mathbb{K}$.) If $\mathcal{A}$ is connected, then there exists a unique minimal model $\mathcal{M}$ of $\mathcal{A}$ up to isomorphisms (see \cite[chapter 6]{Halperin}).

\begin{defn}\cite[p.\ 260]{D_G_M_S}\label{definition_formal}
Suppose $\mathcal{M}$ is a minimal model of $\mathcal{A}$. If there exists a homomorphism of CDGA $\psi: (\mathcal{M}, d) \rightarrow (H^{*}(\mathcal{A}), d=0)$ which induces the identity isomorphism of $H^{*} (\mathcal{A})$, then $\mathcal{A}$ is called formal.
\end{defn}

Suppose $M$ is a smooth manifold. Denote by $\Omega M$ the de Rham complex of differential forms on $M$. Then $\Omega M$ is a CDGA over $\mathbb{R}$ with cohomology $H^{*}(\Omega M) \cong H^{*} (M; \mathbb{R})$.

\begin{defn}\label{definition_formal_manifold}
The real homotopy type of a smooth manifold $M$ is formal, or briefly $M$ is formal, if $\Omega M$ is formal.
\end{defn}

We have explained the content of Proposition \ref{proposition_formal}. To prove this proposition, we need the definition of $s$-formal, as in \cite[definition 2.2]{Fernandez_Mumoz}. Suppose $\mathcal{M}$ has homogeneous free generators $a_{\tau}$ ($\tau \in \mathcal{I}$). Denote by $\mathcal{V}^{i}$ the linear space spanned by $\{ a_{\tau} \mid \tau \in \mathcal{I}, \deg (a_{\tau}) =i \}$, i.e. $\mathcal{V}^{i}$ is spanned by the homogeneous free generators of degree $i$. Clearly,  $\mathcal{V}^{i} \subseteq \mathcal{M}^{i}$. It is necessary to point out that, for a fixed $\mathcal{M}$, there are different choices of homogeneous free generators. The space $\mathcal{M}^{i}$ is independent of these choices. However, $\mathcal{V}^{i}$ does depend on the choices. In fact, this observation has been employed in the proof of Theorem \ref{theorem_FM-(n-1)-formal} below. Let $X$ be a subset of $\mathcal{M}$. Denote by $\bigwedge (X)$ the subalgebra over $\mathbb{K}$ generated by $X$ in $\mathcal{M}$.

\begin{defn}\label{definition_s-formal}
Suppose $(\mathcal{A}, d)$ has a minimal model $(\mathcal{M}, d)$. We say $\mathcal{A}$ is $s$-formal for some integer $s \geq 0$ or $s = + \infty$ if we can choose homogeneous free generators of $\mathcal{M}$ such that $\mathcal{V}^{i} = \mathcal{C}^{i} \oplus \mathcal{N}^{i}$, where $\mathcal{C}^{i}$ and $\mathcal{N}^{i}$ satisfy the following conditions:
\begin{enumerate}[(1)]
\item $d (\mathcal{C}^{i}) = 0$;

\item $d: \mathcal{N}^{i} \rightarrow \mathcal{M}$ is injective;

\item any cocycle in the ideal $I (\bigoplus_{i \leq s} \mathcal{N}^{i})$, generated by $\bigoplus_{i \leq s} \mathcal{N}^{i}$ in $\bigwedge \left( \bigoplus_{i \leq s} \mathcal{V}^{i} \right)$, is exact in $\mathcal{M}$.
\end{enumerate}
We say a smooth manifold $M$ is $s$-formal if its de Rham complex $\Omega M$ is $s$-formal.
\end{defn}

\begin{theorem}\cite[(4.1)]{D_G_M_S}\label{theorem_DGMS_formal}
A CDGA $\mathcal{A}$ is formal if and only if it is $+\infty$-formal.
\end{theorem}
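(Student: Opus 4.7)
The plan is to prove both implications of the equivalence separately, with the harder part being the forward direction (formal implies $+\infty$-formal), which requires a careful choice of minimal model generators.

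For the backward direction ($+\infty$-formal implies formal), I would define $\psi : (\mathcal{M}, d) \to (H^{*}(\mathcal{A}), 0)$ on the homogeneous free generators by setting $\psi(a_{\tau}) = \varphi^{*}([a_{\tau}])$ for $a_{\tau} \in \mathcal{C}^{i}$ (well-defined since $da_{\tau} = 0$ by condition (1), where $\varphi : \mathcal{M} \to \mathcal{A}$ is the given minimal model quasi-isomorphism) and $\psi(a_{\tau}) = 0$ for $a_{\tau} \in \mathcal{N}^{i}$, then extending multiplicatively. To confirm that $\psi$ is a CDGA morphism I would verify $\psi(da_{\tau}) = 0$ on each generator (the target has zero differential). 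For $a_{\tau} \in \mathcal{N}^{i}$, decompose $da_{\tau} = y + z$ uniquely with $y$ a polynomial in $\mathcal{C}$-generators only and $z \in I\left(\bigoplus_{i} \mathcal{N}^{i}\right)$; the Leibniz rule and condition (1) give $dy = 0$, hence $dz = d(da_{\tau}) - dy = 0$, so condition (3) with $s = +\infty$ makes $z$ exact, whence $y = da_{\tau} - z$ is exact, giving $\psi(y) = \varphi^{*}[y] = 0$. The same decomposition trick shows that every closed $\zeta \in \mathcal{M}$ is cohomologous to a $\mathcal{C}$-polynomial, which lets me verify $\psi^{*} = \varphi^{*}$ on $H^{*}(\mathcal{M})$, realizing the formality of $\mathcal{A}$.

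For the forward direction, I would exploit the observation that a minimal model of $\mathcal{A}$ is also a minimal model of $(H^{*}(\mathcal{A}), 0)$, since formality provides a quasi-isomorphism $\psi : \mathcal{M} \to (H^{*}(\mathcal{A}), 0)$ and minimal models are unique up to isomorphism. Sullivan's step-by-step Hirsch-extension construction of the minimal model of $(H^{*}(\mathcal{A}), 0)$ naturally produces generators in two families at each stage: closed generators added to hit new cohomology classes (grouped as $\mathcal{C}^{i}$) and generators with injective differential added to kill spurious cocycles (grouped as $\mathcal{N}^{i}$). The canonical map $\tilde\psi$ built alongside this construction sends every $\mathcal{N}$-generator to $0$, so $I\left(\bigoplus_{i} \mathcal{N}^{i}\right) \subset \ker \tilde\psi$; any cocycle $z$ in this ideal therefore satisfies $\tilde\psi^{*}[z] = [\tilde\psi(z)] = 0$, and since $\tilde\psi^{*}$ is an isomorphism, $[z] = 0$ in $H^{*}(\mathcal{M})$, so $z$ is exact. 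Conditions (1) and (2) are built in by construction, so this yields $+\infty$-formality.

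The main obstacle is the forward direction, specifically the bookkeeping at each Hirsch-extension stage to ensure the $\mathcal{C}/\mathcal{N}$ partition is genuinely preserved as new generators are adjoined (i.e., that relations among earlier $\mathcal{N}$-generators never force one to redefine a $\mathcal{C}$-generator in a way that would introduce a nonzero differential). A subsidiary point is the transfer of the good decomposition from the explicitly constructed minimal model of $(H^{*}(\mathcal{A}), 0)$ to an arbitrary minimal model of $\mathcal{A}$, which rests on the standard fact that isomorphisms of minimal CDGAs respect the free-generator filtration and carry any decomposition satisfying (1) and (2) to one of the same form.
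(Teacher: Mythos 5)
The paper offers no proof of this statement: it is quoted directly from Deligne--Griffiths--Morgan--Sullivan \cite[(4.1)]{D_G_M_S}, so there is no in-paper argument to compare against; your proposal essentially reconstructs the classical proof and is correct. Your backward direction is complete as written and yields exactly what Definition \ref{definition_formal} requires, since you check $\psi^{*}=\varphi^{*}$ rather than merely that $\psi^{*}$ is an isomorphism. In the forward direction your outline is the standard one, and the ``bookkeeping'' you flag as the main obstacle is routine rather than a genuine difficulty: the inductive construction never redefines earlier generators, so no relation can force a $\mathcal{C}$-generator to acquire a nonzero differential; the only points that need writing out are (i) injectivity of $d$ on the span of the non-closed generators $a_{p,\mu}$, $p\geq 1$, adjoined across the sub-stages --- if $\sum c_{p,\mu}\,d a_{p,\mu}=0$, take the largest $p$ occurring with a nonzero coefficient: the lower-stage differentials $b_{q,\mu}=d a_{q,\mu}$, $q<p$, are exact in $\mathcal{X}_{p-1}$, while the classes of the $b_{p,\mu}$ are linearly independent in $H^{n+2}(\mathcal{X}_{p-1})$, forcing all coefficients to vanish --- and (ii) the observation that in a CDGA with zero differential ``exact'' means zero, so one may choose $\alpha_{p,\mu}=0$ at every stage, which is precisely your claim that $\tilde\psi$ annihilates every $\mathcal{N}$-generator. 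Your final transfer step is also sound: an isomorphism of minimal CDGAs carries a choice of homogeneous free generators to another such choice (it induces an isomorphism on indecomposables) and visibly preserves conditions (1)--(3) of Definition \ref{definition_s-formal}, so $+\infty$-formality passes from the explicitly constructed minimal model of $(H^{*}(\mathcal{A}),0)$ to any minimal model of $\mathcal{A}$.
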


For a manifold $M$ of dimension $n$, we have $H^{i}(\Omega M) = 0$ for $i>n$. Thus when $i>n$, a cocycle of degree $i$ in a minimal model of $\Omega M$ must be exact. Theorem \ref{theorem_DGMS_formal} immediately implies the following result.
\begin{theorem}\cite[lemma 2.10]{Fernandez_Mumoz}\label{theorem_FM_n-formal}
Suppose $M$ is an $n$-dimensional smooth manifold. Then $M$ is formal if and only if it is $n$-formal.
\end{theorem}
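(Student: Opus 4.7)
The plan is to invoke Theorem \ref{theorem_DGMS_formal}, which identifies formality with $+\infty$-formality, and then to show that for an $n$-manifold the conditions of $n$-formality and $+\infty$-formality coincide. The direction ``$+\infty$-formal $\Rightarrow$ $n$-formal'' is essentially tautological: any decomposition $\mathcal{V}^{i} = \mathcal{C}^{i} \oplus \mathcal{N}^{i}$ witnessing $+\infty$-formality also witnesses $n$-formality, since $I\bigl(\bigoplus_{i \leq n} \mathcal{N}^{i}\bigr) \subseteq I\bigl(\bigoplus_{i} \mathcal{N}^{i}\bigr)$ and so condition (3) of Definition \ref{definition_s-formal} at $s = n$ is a weakening of condition (3) at $s = +\infty$.

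For the non-trivial direction I would assume $M$ is $n$-formal, fix a decomposition $\mathcal{V}^{i} = \mathcal{C}^{i} \oplus \mathcal{N}^{i}$ for $i \leq n$ supplied by $n$-formality, and extend it to $i > n$ by setting $\mathcal{C}^{i} := \ker(d|_{\mathcal{V}^{i}})$ with $\mathcal{N}^{i}$ any linear complement, so that conditions (1) and (2) automatically persist for all $i$. To verify condition (3) at $s = +\infty$, I would take an arbitrary cocycle $\alpha \in I\bigl(\bigoplus_{i} \mathcal{N}^{i}\bigr) \subseteq \mathcal{M}$ and split by $\deg(\alpha)$. When $\deg(\alpha) > n$, exactness of $\alpha$ in $\mathcal{M}$ is immediate from $H^{\deg(\alpha)}(\mathcal{M}) \cong H^{\deg(\alpha)}(M;\mathbb{R}) = 0$, which is precisely the hint given just before the statement. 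When $\deg(\alpha) \leq n$, I would expand $\alpha$ as a sum of monomials each of the form $m \cdot b$ with $b \in \mathcal{N}^{j}$; because the minimal model of a connected CDGA has all free generators of degree $\geq 1$, every factor of a monomial of total degree $\leq n$ has degree $\leq n$, so $\alpha$ already lies in the ideal $I\bigl(\bigoplus_{i \leq n} \mathcal{N}^{i}\bigr)$ inside $\bigwedge\bigl(\bigoplus_{i \leq n} \mathcal{V}^{i}\bigr)$, and $n$-formality yields exactness of $\alpha$ in $\mathcal{M}$.

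The main (though modest) obstacle I anticipate is this degree-counting step in the last case: one has to argue that a cocycle of degree $\leq n$ sitting in the full ideal is in fact exhibited using only generators of degree $\leq n$, so that the $n$-formal hypothesis can be applied verbatim. Assuming $M$ is connected makes the ``generators of degree $\geq 1$'' fact free; the disconnected case reduces to this by working one component at a time. Apart from this bookkeeping, the argument is a direct combination of the definition of $s$-formality, the vanishing of $H^{i}(M;\mathbb{R})$ for $i > n$, and Theorem \ref{theorem_DGMS_formal}.
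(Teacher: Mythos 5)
Your proof is correct and follows essentially the same route as the paper, which also deduces the statement from Theorem \ref{theorem_DGMS_formal} together with the vanishing $H^{i}(\Omega M)=0$ for $i>n$ (deferring details to Fern\'{a}ndez--Mu\~{n}oz). Your extension of the decomposition in degrees above $n$ and the degree-counting step showing that a cocycle of degree $\leq n$ in the full ideal already lies in $I\bigl(\bigoplus_{i\leq n}\mathcal{N}^{i}\bigr)$ are exactly the routine details the paper leaves implicit.
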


Furthermore, for a connected and orientable compact manifold, Fern\'{a}ndez and Mu\~{n}oz proved in \cite{Fernandez_Mumoz} the following powerful theorem which is important for our proof of Proposition \ref{proposition_formal}. The key idea of their proof is that one can improve Theorem \ref{theorem_FM_n-formal} by taking advantage of Poincar\'{e} duality.
\begin{theorem}\cite[theorem 3.1]{Fernandez_Mumoz}\label{theorem_FM-(n-1)-formal}
Suppose $M$ is a connected and orientable compact smooth manifold of dimension $2n$ or $2n-1$. Then $M$ is formal if and only if it is $(n-1)$-formal.
\end{theorem}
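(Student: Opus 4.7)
The forward implication is immediate: if $M$ is formal, then by Theorem \ref{theorem_DGMS_formal} it is $(+\infty)$-formal, which trivially restricts to $(n-1)$-formality. So the real content is the reverse implication, and I would set up the whole argument as a reduction to Theorem \ref{theorem_FM_n-formal}. That is, assuming $M$ is $(n-1)$-formal, I want to upgrade this to $(\dim M)$-formality, whence Theorem \ref{theorem_FM_n-formal} gives formality. Fix a minimal model $(\mathcal{M}, d)$ of $\Omega M$ together with a decomposition $\mathcal{V}^{i} = \mathcal{C}^{i} \oplus \mathcal{N}^{i}$ satisfying (1)--(3) of Definition \ref{definition_s-formal} for $s = n-1$. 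First, I would extend the decomposition to all degrees by choosing, for each $i \geq n$, a direct sum complement $\mathcal{C}^{i}$ of $\mathcal{N}^{i} = (\ker d|_{\mathcal{V}^{i}})^{\perp}$ in $\mathcal{V}^{i}$; conditions (1) and (2) hold automatically by construction. The entire game is then to verify condition (3) at the level $s = \dim M$, that is, that every cocycle $\xi$ of some degree $k$ belonging to the ideal $I\bigl(\bigoplus_{i \leq \dim M} \mathcal{N}^{i}\bigr)$ is exact in $\mathcal{M}$.

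The plan for condition (3) uses Poincaré duality as the central lever. If $k > \dim M$, then $H^{k}(M) = 0$ and there is nothing to do. If $k \leq n-1$, the generators appearing in $\xi$ all have degree at most $n-1$, so $\xi$ already lies in $I\bigl(\bigoplus_{i \leq n-1} \mathcal{N}^{i}\bigr)$ and exactness follows from the hypothesis of $(n-1)$-formality. The interesting range is $n \leq k \leq \dim M$, and here I would argue by Poincaré duality: $[\xi] = 0$ in $H^{k}(M)$ iff $\int_{M} \xi \cup \alpha = 0$ for every cocycle $\alpha$ representing any class in $H^{\dim M - k}(M)$. Since $\dim M - k \leq \dim M - n$ falls in the range $\{0, 1, \dots, n\}$ (respectively $\{0, \dots, n-1\}$ in the odd-dimensional case), the complementary degree lies in the $(n-1)$-formal range, up to the single boundary degree $n$ in the even case.

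The crucial intermediate lemma I would establish is: under $(n-1)$-formality, every class in $H^{j}(\mathcal{M})$ with $j \leq n-1$ admits a representative cocycle inside the subalgebra $\bigwedge\bigl(\bigoplus_{i \leq n-1} \mathcal{C}^{i}\bigr)$, i.e., with no $\mathcal{N}^{i}$ factor. Granting this, choose such a representative $\alpha$ for the Poincaré-dual class. Then $\xi \cup \alpha$ is a cocycle of top degree still belonging to $I\bigl(\bigoplus_{i \leq \dim M} \mathcal{N}^{i}\bigr)$, since $\xi$ is in the ideal and $\alpha$ does not introduce any new generators. The point is that $\alpha$ lives in the subalgebra $\bigwedge\bigl(\bigoplus_{i \leq n-1} \mathcal{C}^{i}\bigr)$ on which the quotient by the ideal of all $\mathcal{N}^{i}$ is an algebra map, so the product $\xi \cup \alpha$ projects to zero in the quotient. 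Combining this with a careful bookkeeping of how $\int_{M}$ factors through cohomology, one concludes $\int_{M} \xi \cup \alpha = 0$, hence $[\xi] = 0$, hence $\xi$ is exact.

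The main obstacle lies in the boundary case $k = n$ (only relevant when $\dim M = 2n$), because both $k$ and $\dim M - k$ equal $n$, which is just above the $(n-1)$-formal range. Here one cannot directly replace $\alpha$ by a cocycle avoiding $\mathcal{N}^{n}$. I would handle this by a symmetrization trick: pair $\xi$ with itself (or with an arbitrary degree-$n$ cocycle $\alpha$) and show that the $\mathcal{N}^{n}$-contributions cancel in pairs after exploiting the decomposition $\mathcal{V}^{n} = \mathcal{C}^{n} \oplus \mathcal{N}^{n}$ together with the injectivity of $d$ on $\mathcal{N}^{n}$, reducing matters to a Poincaré-duality computation entirely inside $\bigwedge\bigl(\bigoplus_{i \leq n-1} \mathcal{V}^{i}\bigr)$, where $(n-1)$-formality applies. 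Once this middle-degree case is dispatched, all remaining degrees are settled by the previous paragraph and condition (3) holds for $s = \dim M$, completing the upgrade to $(\dim M)$-formality and hence, by Theorem \ref{theorem_FM_n-formal}, to formality.
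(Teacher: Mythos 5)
You should first note that the paper itself does not prove this statement: it is quoted from Fern\'andez--Mu\~noz \cite{Fernandez_Mumoz}, and the paper only records the guiding idea, namely that one improves Theorem \ref{theorem_FM_n-formal} by exploiting Poincar\'e duality. Your overall strategy -- extend the decomposition $\mathcal{V}^i=\mathcal{C}^i\oplus\mathcal{N}^i$ to all degrees, verify condition (3) of Definition \ref{definition_s-formal} for $s=\dim M$, and invoke Theorem \ref{theorem_FM_n-formal} -- is consistent with that idea, and your intermediate lemma is correct: any cocycle of degree $\le n-1$ lies in $\bigwedge\bigl(\bigoplus_{i\le n-1}\mathcal{V}^i\bigr)$, its component in the ideal is itself a cocycle (since $d$ vanishes on $\bigwedge\bigl(\bigoplus_{i\le n-1}\mathcal{C}^i\bigr)$), hence is exact by $(n-1)$-formality, so every class of degree $\le n-1$ has a representative built only from the $\mathcal{C}^i$. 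The forward implication via Theorem \ref{theorem_DGMS_formal} is also fine.

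The genuine gap sits exactly where the theorem is hard. For $n\le k\le\dim M$ you pair a closed $\xi\in I\bigl(\bigoplus_{i\le\dim M}\mathcal{N}^i\bigr)$ with a representative $\alpha\in\bigwedge\bigl(\bigoplus_{i\le n-1}\mathcal{C}^i\bigr)$ of a complementary class and assert $\int_M\xi\cup\alpha=0$ on the grounds that $\xi\cup\alpha$ maps to zero in the quotient of $\mathcal{M}$ by the ideal generated by all $\mathcal{N}^i$. But that quotient is merely an algebra, not quasi-isomorphic to $\Omega M$, and the integration functional on $H^{\dim M}$ does not factor through it; the statement ``a closed top-degree element of the ideal represents the zero class'' is precisely the $k=\dim M$ instance of condition (3) that you are trying to establish, so your reduction is circular: every case $n+1\le k\le\dim M-1$ is pushed into the top-degree case, which is never proved. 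Similarly, the middle-degree case $k=n$ (when $\dim M=2n$) is dismissed by an unspecified ``symmetrization trick'' with no actual cancellation mechanism exhibited. These degree-$\ge n$ cases are the substance of Fern\'andez and Mu\~noz's argument -- their use of Poincar\'e duality is a genuinely delicate analysis of closed elements of the ideal in those degrees, not a formal quotient argument -- so as written your proposal assumes the hard part rather than proving it.
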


Recall that $M(A) = S^{1} \times N$. We shall apply Theorem \ref{theorem_FM-(n-1)-formal} to $N$ by constructing a minimal model of $\Omega N$. As mentioned above, every connected CDGA $\mathcal{A}$ has a minimal model $\mathcal{M}$. Let us briefly describe the construction of such a minimal model $\mathcal{M}$. For more details, see \cite[chapter 6]{Halperin}.

We shall inductively construct a minimal CDGA $\mathcal{M}_{n}$ ($n \in \mathbb{Z}$, $n \geq 0$) and a CDGA homomorphism $\varphi_{n}: \mathcal{M}_{n} \rightarrow \mathcal{A}$ such that $\varphi_{n}$ is $n$-regular and $(\mathcal{M}_{n}, \varphi_{n})$ is an extension of $(\mathcal{M}_{n-1}, \varphi_{n-1})$. Here $n$-regular means $\varphi_{n}^{*}: H^{i}(\mathcal{M}_{n}) \rightarrow H^{i}(\mathcal{A})$ is an isomorphism for $i \leq n$ and an injection for $i=n+1$. Given such $\mathcal{M}_n$ and $\varphi_n$, we define $\mathcal{M}= \bigcup_{n \geq 0} \mathcal{M}_{n}$, and $\varphi: \mathcal{M} \rightarrow \mathcal{A}$ by $\varphi|_{\mathcal{M}_{n}} = \varphi_{n}$. Certainly, $(\mathcal{M}, \varphi)$ is a minimal model of $\mathcal{A}$.

To construct $\mathcal{M}_n$ and $\varphi_n$, we start by putting $\mathcal{M}_{0} = \mathcal{M}_{0}^{0} = \mathbb{K}$,  and $\varphi_{0}: \mathcal{M}_{0}^{0} = \mathbb{K} \rightarrow \mathbb{K} \subseteq \mathcal{A}^{0}$ such that it is the identity isomorphism of $\mathbb{K}$. Then $\varphi_{0}$ is $0$-regular.

Next, assuming the existence of $(\mathcal{M}_{n}, \varphi_{n})$, we now construct $(\mathcal{M}_{n+1}, \varphi_{n+1})$. If $\varphi_{n}^{*}: H^{n+1}(\mathcal{M}_{n}) \rightarrow H^{n+1}(\mathcal{A})$ is surjective, then it is an isomorphism. Otherwise, choose a collection of cocycles $\{ \alpha_{0, \mu} \mid \mu \in \mathcal{I}_{0} \} \subseteq \mathcal{A}^{n+1}$ which represent a basis of the cokernel of $\varphi_{n}^{*}: H^{n+1}(\mathcal{M}_{n}) \rightarrow H^{n+1}(\mathcal{A})$. Here $\mathcal{I}_{0}$ is an ordered index set. Introducing free homogeneous generators $a_{0, \mu}$ of degree $n+1$ to $\mathcal{M}_{n}$, we get a CDGA $\mathcal{X}_{0}$. More precisely, $\mathcal{X}_{0} = \mathcal{M}_{n}\otimes \mathcal{F}$ where $\mathcal{F}$ is the free CDGA generated by these $a_{0, \mu}$ with zero differentials. We identify the new generators $1 \otimes a_{0, \mu} \in \mathcal{X}_{0}$ with $a_{0, \mu}$, and they succeed those old generators. Extend $\varphi_{n}$ to $\psi_{0}: \mathcal{X}_{0} \rightarrow \mathcal{A}$ such that $\psi_{0} (a_{0, \mu}) = \alpha_{0, \mu}$. Then $\psi_{0}^{*}: H^{i} (\mathcal{X}_{0}) \rightarrow H^{i}(\mathcal{A})$ is an isomorphism for $i \leq n+1$. Denote by $\mathcal{Y}_{0}$ the kernel of $\psi_{0}^{*}: H^{n+2}(\mathcal{X}_{0}) \rightarrow H^{n+2}(\mathcal{A})$. If $\mathcal{Y}_{0} \neq 0$, choose a collection of cocycles $\{ b_{1, \mu} \mid \mu \in \mathcal{I}_{1} \} \subseteq \mathcal{X}_{0}^{n+2}$ which represent a basis of $\mathcal{Y}_{0}$. Introducing free homogeneous generators $a_{1, \mu}$ of degree $n+1$ to $\mathcal{X}_{0}$, we get a CDGA extension $\mathcal{X}_{1}$ such that $d a_{1, \mu} = b_{1, \mu}$. Here the elements in $\mathcal{I}_{1}$ succeed those in $\mathcal{I}_{0}$. Since $\psi_{0} (b_{1, \mu})$ is exact in $\mathcal{A}$, we have $\psi_{0} (b_{1, \mu}) = d \alpha_{1, \mu}$ for some $\alpha_{1, \mu} \in \mathcal{A}^{n+1}$. Extend $\psi_{0}$ to $\psi_{1}: \mathcal{X}_{1} \rightarrow \mathcal{A}$ such that $\psi_{1} (a_{1, \mu}) = \alpha_{1, \mu}$. Then $\mathcal{Y}_{0}$ is killed in $H^{n+2}(\mathcal{X}_{1})$. Denote by $\mathcal{Y}_{1}$ the kernel of $\psi_{1}^{*}: H^{n+2}(\mathcal{X}_{1}) \rightarrow H^{n+2}(\mathcal{A})$. If $\mathcal{Y}_{1} \neq 0$, then we extend $(\mathcal{X}_{1}, \psi_{1})$ to be $(\mathcal{X}_{2}, \psi_{2})$ which kills $\mathcal{Y}_{1}$. Repeating this procedure, we get $\mathcal{M}_{n+1} = \bigcup_{p \geq 0} \mathcal{X}_{p}$ and $\varphi_{n+1}: \mathcal{M}_{n+1} \rightarrow \mathcal{A}$ such that $\varphi_{n+1}|_{\mathcal{X}_{p}} = \psi_{p}$. Clearly, $\varphi_{n+1}^{*}: H^{i}(\mathcal{M}_{n+1}) \rightarrow H^{i}(\mathcal{A})$ is an isomorphism for $i \leq n+1$. Suppose $b \in \mathcal{M}_{n+1}^{n+2}$ is a cocycle such that $\varphi_{n+1}(b)$ is exact in $\mathcal{A}$. This $b$ must be in $\mathcal{X}_{p}$ for some $p$. Then its cohomology class is killed in $\mathcal{X}_{p+1}$. Thus $\varphi_{n+1}^{*}: H^{n+2}(\mathcal{M}_{n+1}) \rightarrow H^{n+2}(\mathcal{A})$ is injective. We see that $\varphi_{n+1}$ is $(n+1)$-regular.

\begin{proof}[Proof of Proposition \ref{proposition_formal}]
Since $M(A) = S^{1} \times N$ and $S^{1}$ is formal, it suffices to show that $N$ is formal. We shall first construct a minimal model of $\Omega N$.

Since the matrix $A$ is diagonalizable, by Lemma \ref{lemma_odd_complex_homology}, $\dim H^{1}(\Omega N) = 1$ and for any $i$
\[
s_{2} \cup: H^{2i}(\Omega N) \rightarrow H^{2i+1}(\Omega N)
\]
is an isomorphism. Here $s_{2}$ is a basis of $H^{1}(\Omega N)$. Choose $\alpha \in (\Omega N)^{1}$ and $\beta_{j} \in (\Omega N)^{2}$ ($1 \leq j \leq k$) such that $[\alpha] = s_{2}$ and $\beta_{1}, \dots, \beta_{k}$ represent a basis of $H^{2}(\Omega N)$. Then $\alpha \wedge \beta_{1}, \dots, \alpha \wedge \beta_{k}$ represent a basis of $H^{3}(\Omega N)$.

Construct a minimal CDGA $\mathcal{M}_{2}$ which is generated by one homogeneous element $a$ of degree $1$ and homogeneous elements $b_{1}, \dots, b_{k}$ of degree $2$. Define the differential $d$ of $\mathcal{M}_2$ to be zero. Then $\mathcal{M}_{2}$ is the tensor product of the exterior algebra $\bigwedge(a)$ and the polynomial algebra $\bigwedge(b_{1}, \dots, b_{k})$. Since $(\bigwedge(a))^{i} = 0$ for $i>1$, the linear space $\mathcal{M}_{2}^{1}$ has a basis $a$, $\mathcal{M}_{2}^{2}$ has a basis $b_{1}, \dots, b_{k}$, and $\mathcal{M}_{2}^{3}$ has a basis $ab_{1}, \dots, ab_{k}$. Define $\varphi_{2}: \mathcal{M}_{2} \rightarrow \Omega N$ such that $\varphi_{2} (a) = \alpha$ and $\varphi_{2} (b_{j}) = \beta_{j}$. Then $\varphi_{2} (ab_{j}) = \alpha \wedge \beta_{j}$. We see that $\varphi_{2}^{*}: H^{i}(\mathcal{M}_{2}) \rightarrow H^{i}(\Omega N)$ is an isomorphism for $0 \leq i \leq 3$. In particular, $\varphi_{2}$ is $2$-regular.

Apply the argument before this proof, we can extend $\mathcal{M}_{2}$ to be a minimal model $\mathcal{M}\\$ for $\Omega N$. Note that the extension only introduces new generators of degrees greater than $2$. Therefore, $\mathcal{M}^{i} = \mathcal{M}_{2}^{i}$ for $0 \leq i \leq 2$. In particular, the differential $d$ vanishes on $\mathcal{M}^{i}$ for $0 \leq i \leq 2$. We further infer that $\mathcal{N}^{i} = \{ 0 \}$ for $0 \leq i \leq 2$, where $\mathcal{N}^{i}$ is defined as in Definition \ref{definition_s-formal}. Thus $N$ is $2$-formal.

Since $\dim N = 5$, by Theorem \ref{theorem_FM-(n-1)-formal}, we conclude that $N$ is formal.
\end{proof}

\section{Cohomology Jump Loci}\label{section_jump}
In this section, we will prove Theorem \ref{theorem_monodromy} and Theorem \ref{theorem_not_homotopy}. The proofs will be based on the theory of cohomology jump loci. We shall first review some basic aspects of this theory and the main result of \cite{wang}.

Let $X$ be a connected topological space that is homotopy equivalent to a finite CW complex. Obviously, $X$ is also path-connected. Define the character variety $\Char (X)= \homo (\pi_{1}(X), \mathbb{C}^{*})$ to be the set of rank one characters of $\pi_{1}(X)$, where $\mathbb{C}^{*} = \mathbb{C} \setminus \{0\}$. Then $\Char(X)$ is naturally isomorphic to the moduli space of rank one local systems on $X$. (See for example \cite{dimca} for the definition of local systems.) For each $\rho\in \Char (X)$, there is a unique rank one local system $L_{\rho}$ whose monodromy action is isomorphic to $\rho$. The character variety $\Char (X)$ is naturally an abelian complex linear algebraic group. In fact, since $\homo (\pi_{1}(X), \mathbb{C}^{*}) \cong \homo (H_{1}(X; \mathbb{Z}), \mathbb{C}^{*})$, we infer $\Char(X)$ is isomorphic to the product of $\left( \mathbb{C}^{*} \right)^{b_{1}(X)}$ and a finite abelian group, where $b_{1}(X)$ is the first Betti number of $X$.

The cohomology jump loci $\Sigma^j_k(X)$ of $X$ are defined by
\[
\Sigma^{j}_{k} (X)=\{\rho\in \Char (X)|\dim_{\mathbb{C}} H^{j}(X; L_{\rho}) \geq k\}.
\]
They are algebraic subsets of $\Char(X)$. It is easy to see that both $\Char(X)$ and $\Sigma^j_k(X)$ are homotopy invariants. More precisely, if $h: X \rightarrow Y$ is a homotopy equivalence, then $h$ induces an isomorphism $h^{*}: \Char (Y) \rightarrow \Char (X)$ of algebraic groups which maps $\Sigma^{j}_{k} (Y)$ onto $\Sigma^{j}_{k} (X)$.

\begin{theorem}\cite[theorem 1.3]{wang}\label{theorem_torsiontranslate}
Suppose $X$ is homotopy equivalent to a compact K\"ahler manifold. Then for any $j, k$, each irreducible component of $\Sigma^{j}_{k} (X)$ is of the form $\tau \cdot T$, where $\tau$ is a torsion element, i.e. an element of finite order, in $\Char (X)$ and $T$ is an irreducible linear subgroup of $\Char (X)$.
\end{theorem}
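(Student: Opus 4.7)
\textbf{Proof proposal for Theorem \ref{theorem_torsiontranslate}.} The plan is to reduce to the case when $X$ itself is a compact K\"ahler manifold (permissible since both $\Char(X)$ and $\Sigma^{j}_{k}(X)$ are homotopy invariants) and then apply non-abelian Hodge theory. The central tool is the Simpson--Corlette correspondence: there is a real-analytic diffeomorphism between the Betti character variety $\Char(X) \cong H^{1}(X;\mathbb{C}^{*})$ and the rank-one Dolbeault moduli space $M_{\mathrm{Dol}}(X) \cong \Pic^{0}(X) \times H^{0}(X,\Omega^{1}_{X})$, sending a character $\rho$ to a pair $(L_{\rho},\theta_{\rho})$ consisting of a degree-zero line bundle and a holomorphic $1$-form.

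First, I would translate the problem to the Dolbeault side using Simpson's identification of twisted cohomology with the hypercohomology of the Higgs complex $(\Omega^{\bullet}_{X} \otimes L,\; d+\theta\wedge \cdot)$. This converts each $\Sigma^{j}_{k}(X)$ into an analogous jump locus inside $M_{\mathrm{Dol}}(X)$. The key geometric constraint is the natural $\mathbb{C}^{*}$-action $t\cdot (L,\theta)=(L,t\theta)$: for $t\neq 0$ the Higgs complexes at $(L,\theta)$ and $(L,t\theta)$ are isomorphic via a rescaling of the differential, so each Dolbeault jump locus is $\mathbb{C}^{*}$-stable; upper semicontinuity of hypercohomology dimension as $t\to 0$ shows that the closure of each component picks up only points $(L,0)\in \Pic^{0}(X)$ that already lie in the locus. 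Combined with irreducibility, this forces every irreducible component to split as $Z_{0}\times V$ with $Z_{0}\subset \Pic^{0}(X)$ and $V\subset H^{0}(X,\Omega^{1}_{X})$ a linear subspace.

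The next step is to show that $Z_{0}$ is a torsion translate of an abelian subvariety. Here one invokes the Green--Lazarsfeld--Beauville--Simpson--Arapura structure theory: each positive-dimensional component of the degree $j$ jump locus in $\Pic^{0}(X)$ is pulled back from the Picard variety of some lower-dimensional Albanese quotient or orbifold curve $f\colon X\to Y$, so the subgroup factor $T$ arises as $f^{*}\Pic^{0}(Y)$. To promote the translate $\tau$ from an arbitrary point to a torsion point, one uses that $\mathbb{C}^{*}$-fixed loci on $M_{\mathrm{Dol}}$ parametrize complex variations of Hodge structure; rigidity of such variations (together with Deligne's theorem of the fixed part) forces the components to be defined over $\overline{\mathbb{Q}}$, and compatibility with the Betti $\mathbb{Z}$-lattice then implies that $\tau$ has finite order.

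The main obstacle is precisely this last step: showing that the translate is a torsion point, not merely an algebraic one. The passage requires arithmetic input beyond pure Hodge theory, such as the Mellin transform and perverse-sheaf interpretation of jump loci (Gabber--Loeser, Budur--Wang), which packages the integral Betti structure in a form compatible with the $\mathbb{C}^{*}$-action. Extracting torsion from this combination of Hodge-theoretic rigidity and integrality is the technical heart of the argument carried out in \cite{wang}; everything above --- the Simpson transfer, the $\mathbb{C}^{*}$-invariance, and the splitting of components --- is essentially bookkeeping that sets the stage for that arithmetic conclusion.
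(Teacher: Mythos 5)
The paper does not prove this theorem --- it quotes it verbatim from \cite{wang} --- so there is no internal argument to compare yours against; your proposal has to stand on its own as a reconstruction of the external proof, and as such it is a reasonable road map rather than a proof. You correctly identify the standard ingredients: reduction to the case where $X$ is itself compact K\"ahler via homotopy invariance of $\Char(X)$ and $\Sigma^j_k(X)$, the rank-one Simpson-type correspondence $\Char(X)\cong \Pic^{0}(X)\times H^{0}(X,\Omega^{1}_{X})$ (up to a finite group), the $\mathbb{C}^{*}$-rescaling of the Higgs field, and the Green--Lazarsfeld--Beauville--Simpson--Arapura structure theory for the subtorus $T$. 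But one intermediate step is too quick: $\mathbb{C}^{*}$-invariance of an irreducible closed subvariety of $\Pic^{0}(X)\times H^{0}(X,\Omega^{1}_{X})$, together with semicontinuity as $t\to 0$, does not force a product decomposition $Z_{0}\times V$ with $V$ a fixed linear subspace --- a cone whose fibre over $L$ varies with $L\in Z_{0}$ is closed and $\mathbb{C}^{*}$-stable. The splitting needs an actual linearity statement for the resonance-type loci in $H^{0}(X,\Omega^{1}_{X})$, or the prior knowledge that the component is a translated subgroup, neither of which you supply.

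More seriously, the entire content of the theorem beyond what was classically known is the step you explicitly defer: showing that the translate $\tau$ is a torsion character rather than merely one defined over $\bar{\mathbb{Q}}$. Your final paragraph names candidate tools (rigidity of complex variations of Hodge structure, Deligne's theorem of the fixed part, Gabber--Loeser/Budur--Wang Mellin-transform methods) but does not assemble them into an argument, and the Mellin-transform machinery you invoke belongs to a different setting (constructible complexes on affine tori and quasi-projective varieties) from the one treated in \cite{wang}. A standard way to close this gap is to show that each component contains a character that is simultaneously unitary (from Hodge theory) and algebraic, that the jump loci are stable under the Galois action on coefficients so that every conjugate of that character is again unitary, and then to apply Kronecker's theorem to conclude it is a root of unity; nothing of this kind appears in your write-up. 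Since you yourself label the missing step as ``the technical heart of the argument carried out in \cite{wang}'', the proposal ultimately cites the result it is meant to prove, and so has a genuine gap precisely where the theorem's difficulty lies.
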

\begin{rmk}
In \cite{arapura}, it was proved by Arapura that each irreducible component of $\Sigma^{1}_{k} (X)$ is of the form $\tau \cdot T$, where $\tau$ is a unitary character. Using this result, Papadima and Suciu (\cite{papadima_suciu}) constructed a family of compact, orientable, formal 4-manifolds, which have the real homotopy type of smooth projective surfaces, but they are not homotopy equivalent to any compact K\"ahler manifold.
\end{rmk}

Let $p: E \rightarrow S^{1}$ be a fiber bundle with fiber $F$.  Suppose $E$ is connected and suppose E is homotopy equivalent to a finite CW complex. Then $p$ induces a homomorphism $p_{*}: \pi_{1}(E) \rightarrow \pi_{1}(S^{1})$ which further induces a homomorphism $p^{*}: \Char (S^{1}) \rightarrow \Char (E)$.

\begin{lemma}\label{lemma_jump_loci_injective}
Suppose $F$ is path-connected. Then $p^{*}: \Char (S^{1}) \rightarrow \Char (E)$ is injective.
\end{lemma}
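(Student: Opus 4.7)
The plan is to translate the question about characters into a question about first homology, and then use the homotopy long exact sequence of the fibration. More precisely, since $\homo(\pi_1(-), \mathbb{C}^*) \cong \homo(H_1(-; \mathbb{Z}), \mathbb{C}^*)$ (as every character factors through the abelianization), the map $p^*: \Char(S^1) \to \Char(E)$ is the same as
\[
p^* : \homo(H_1(S^1; \mathbb{Z}), \mathbb{C}^*) \longrightarrow \homo(H_1(E; \mathbb{Z}), \mathbb{C}^*).
\]
By left exactness of $\homo(-, \mathbb{C}^*)$ applied to the exact sequence
\[
H_1(E; \mathbb{Z}) \xrightarrow{p_*} H_1(S^1; \mathbb{Z}) \to 0,
\]
it suffices to prove that $p_*: H_1(E; \mathbb{Z}) \to H_1(S^1; \mathbb{Z})$ is surjective.

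To establish the surjectivity of $p_*$, I would use the homotopy long exact sequence of the fibration $p: E \to S^1$ with fiber $F$. Since $F$ is path-connected, $\pi_0(F)$ is a one-point set, so the sequence
\[
\pi_1(E) \xrightarrow{p_*} \pi_1(S^1) \longrightarrow \pi_0(F) = \ast
\]
shows that $p_*: \pi_1(E) \to \pi_1(S^1)$ is surjective. Abelianizing and applying the Hurewicz theorem, the induced map $p_*: H_1(E; \mathbb{Z}) \to H_1(S^1; \mathbb{Z}) \cong \mathbb{Z}$ is also surjective, which completes the argument.

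Because the whole proof is quite short and purely formal, there is no genuine obstacle; the only point worth being slightly careful about is the identification $\Char(X) \cong \homo(H_1(X; \mathbb{Z}), \mathbb{C}^*)$, which is exactly the identification already used in the introductory paragraph of this section. The connectedness of $E$ (so that the character variety and fundamental group are well defined with a consistent basepoint) and the path-connectedness of $F$ (so that the homotopy long exact sequence terminates at a point) are both used, and both are part of the hypotheses.
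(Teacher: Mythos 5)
Your proof is correct and follows essentially the same route as the paper: path-connectedness of $F$ gives surjectivity of $p_*$ on $\pi_1$ via the homotopy exact sequence, whence injectivity of $p^*$ on characters. (The detour through $H_1$ is harmless but unnecessary — surjectivity of $p_*:\pi_1(E)\to\pi_1(S^1)$ already implies injectivity of $\homo(-,\mathbb{C}^*)$ applied to it.)
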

\begin{proof}
Since $F$ is path-connected, $p_{*}: \pi_{1}(E) \rightarrow \pi_{1}(S^{1})$ is surjective, and hence $p^{*}: \Char (S^{1}) \rightarrow \Char (E)$ is injective.
\end{proof}

Fix an orientation of $S^1$. Then the monodromy action on $F$ is well-defined up to homotopy.

\begin{prop}\label{proposition_jump_equal}
Suppose that $H^{j}(F; \mathbb{C})$ is finite dimensional for all $j$. Denote by $\Gamma_{j}$ the set of eigenvalues of the monodromy action on $H^{j}(F; \mathbb{C})$. Define $\Gamma_{j}^{-1}= \{t^{-1} \mid t\in \Gamma_{j} \}$. Then
\[
\left( \bigcup_{j \geq 0} \Sigma^{j}_{1}(E) \right) \cap p^{*}\Char (S^{1})= p^{*} \left( \bigcup_{j \geq 0}\Gamma_{j}^{-1} \right).
\]
Here $\Char (S^{1})$ is identified with $\mathbb{C}^*$ via the fixed orientation of $S^1$.
\end{prop}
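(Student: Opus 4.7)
The plan is to reduce the statement to a twisted analog of the Wang sequence of Lemma~\ref{lemma_wang} and then read off nontriviality of $H^j(E;L_{p^*\rho})$ from the eigenvalue structure of the monodromy $\eta^*$. First I would fix a model of $E$ as the mapping torus $(F\times\mathbb{R})/\mathbb{Z}$, where $n\in\mathbb{Z}$ acts by $(x,t)\mapsto(\eta^n(x),t+n)$ and $\eta:F\to F$ is a monodromy homeomorphism compatible with the chosen positive generator $s\in\pi_1(S^1)$. Under the identification $\Char(S^1)\cong\mathbb{C}^*$, a character corresponding to $\lambda\in\mathbb{C}^*$ pulls back to the rank-one local system $L_{p^*\rho}$ on $E$ which is trivial on each fiber (this uses that $F$ is path-connected, so that $\pi_1(F)\to\pi_1(E)\to\pi_1(S^1)$ is zero on the left) and whose holonomy along a lift of $s$ is multiplication by $\lambda$.

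The next step is to establish the twisted Wang sequence
\[
\cdots\to H^j(E;L_{p^*\rho})\to H^j(F;\mathbb{C})\xrightarrow{\lambda\cdot\eta^*-\mathrm{id}} H^j(F;\mathbb{C})\to H^{j+1}(E;L_{p^*\rho})\to\cdots.
\]
This can be derived by running the Mayer--Vietoris argument that gives Lemma~\ref{lemma_wang}, now using that the generator of the deck group of the $\mathbb{Z}$-cover $F\times\mathbb{R}\to E$ acts on the coefficient line $\mathbb{C}$ of $L_{p^*\rho}$ by $\lambda$ rather than trivially; equivalently, it is the long exact sequence coming from the Leray spectral sequence $E_2^{p,q}=H^p(S^1;R^qp_*L_{p^*\rho})$, which collapses because $S^1$ has cohomological dimension one, and where $R^qp_*L_{p^*\rho}$ is the local system on $S^1$ whose stalk $H^q(F;\mathbb{C})$ carries monodromy $\lambda\cdot\eta^*$.

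Finally I would conclude as follows. The pulled-back character $p^*\rho$ lies in $\bigcup_j\Sigma^j_1(E)$ iff $H^j(E;L_{p^*\rho})\neq 0$ for some $j$; by the twisted Wang sequence, this happens iff the endomorphism $\lambda\cdot\eta^*-\mathrm{id}$ of $H^j(F;\mathbb{C})$ has nontrivial kernel for some $j$ or nontrivial cokernel for some $j$. Here the finite-dimensionality hypothesis on $H^*(F;\mathbb{C})$ is used essentially: an endomorphism of a finite-dimensional vector space has nontrivial kernel iff it has nontrivial cokernel, so the two conditions collapse to the single condition that $\eta^*$ has eigenvalue $\lambda^{-1}$ on some $H^j(F;\mathbb{C})$, i.e.\ $\lambda\in\bigcup_j\Gamma_j^{-1}$. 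Together with the injectivity of $p^*:\Char(S^1)\to\Char(E)$ from Lemma~\ref{lemma_jump_loci_injective}, which guarantees that the equality of subsets of $\Char(E)$ reflects an honest equality of subsets of $\mathbb{C}^*$, this yields the claimed identity.

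The main delicate point I anticipate is calibrating the orientation and inversion conventions in the twisted Wang sequence so that the kernel condition reads as $\lambda^{-1}\in\Gamma_j$ and not $\lambda\in\Gamma_j$: one has to keep track of whether a character with value $\lambda$ on $s$ contributes a factor of $\lambda$ or $\lambda^{-1}$ when passing to equivariant sections over the $\mathbb{Z}$-cover. Once that sign/inverse check is performed, everything else is formal bookkeeping.
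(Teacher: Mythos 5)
Your proposal is correct and follows essentially the same route as the paper: a twisted Wang sequence (the paper writes the map as $\eta^{*,j}-\hat\rho^{-1}\Id$, which agrees with your $\lambda\cdot\eta^*-\mathrm{id}$ up to the nonzero scalar $\lambda$, hence has the same kernel and cokernel), followed by the observation that finite-dimensionality makes the kernel and cokernel conditions equivalent, so that $H^j(E;L_{p^*\rho})\neq 0$ for some $j$ exactly when $\lambda^{-1}$ is an eigenvalue of the monodromy on some $H^j(F;\mathbb{C})$.
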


A more general form of Proposition \ref{proposition_jump_equal} is proved in \cite[theorem 3.6]{ps}. For reader's convenience, we give a proof here.

\begin{proof}[Proof of Proposition \ref{proposition_jump_equal}]
Given any $\hat{\rho}\in \Char (S^{1})$, denote by $\rho= p^{*}(\hat{\rho})$. We have the following Wang sequence with local system, which is a generalized version of Lemma \ref{lemma_wang}:
\[
\longrightarrow H^{j-1}(F; L_{\rho}) \overset{\delta}{\longrightarrow} H^{j}(E; L_{\rho}) \overset{i^{*}}{\longrightarrow} H^{j}(F; L_{\rho}) \overset{\eta^{*,j} - \hat{\rho}^{-1} \Id}{\longrightarrow} H^{j}(F; L_{\rho}) \longrightarrow,
\]
where the map $\eta^{*,j}: H^{j}(F; L_{\rho}) \rightarrow H^{j}(F; L_{\rho})$ is induced by the monodromy action. This result can be easily checked using the Mayer-Vietoris sequence, and it is very similar to the proposition 6.4.8 in \cite[p.\ 212]{dimca}.

By the above Wang sequence, we immediately get
\[
\dim_{\mathbb{C}} H^{j}(E; L_{\rho}) = \dim_{\mathbb{C}} \kn \left( \eta^{*,j} - \hat{\rho}^{-1} \Id \right) + \dim_{\mathbb{C}} \ckn \left( \eta^{*,j-1} - \hat{\rho}^{-1} \Id \right).
\]

By definition, $\rho \in \bigcup_{j \geq 0} \Sigma^{j}_{1}(E)$ if and only if $\dim_{\mathbb{C}} H^{j}(E; L_{\rho}) > 0$ for some $j$. By the above equation, $\dim_{\mathbb{C}} H^{j}(E; L_{\rho}) > 0$ if and only if $\hat{\rho}^{-1} \in \Gamma_{j}$ or $\hat{\rho}^{-1} \in \Gamma_{j-1}$. This completes the proof.
\end{proof}

\begin{proof}[Proof of Theorem \ref{theorem_monodromy}]
If $\lambda$ is an eigenvalue of this monodromy action, by Proposition \ref{proposition_jump_equal}, $p^{*} (\lambda^{-1}) \in \left( \bigcup_{j \geq 0} \Sigma^{j}_{1}(E) \right) \cap p^{*}\Char (S^{1})$ . By Lemma \ref{lemma_jump_loci_injective}, $p^{*}$ is injective. Thus it suffices to show that $\left( \bigcup_{j \geq 0} \Sigma^{j}_{1}(E) \right) \cap p^{*}\Char (S^{1})$ consists of torsion points, or equivalently $\Sigma^{j}_{1}(E)\cap p^{*}\Char (S^{1})$ consists of torsion points for any $j$.

By Theorem \ref{theorem_torsiontranslate}, an irreducible component of $\Sigma^{j}_{1}(E)$ is of the form $\tau \cdot T$, where $\tau$ is a torsion point. Suppose $V= (\tau \cdot T) \cap p^{*}\Char (S^{1})$ is nonempty. Then $V$ is an algebraic subset. It suffices further to verify that $V$ consists of torsion points.

By Proposition \ref{proposition_jump_equal}, $V$ is a countable set. Thus $V$ has dimension $0$, and hence contains finitely many points. Suppose $\tau$ has order $n$. Then $G = \bigcup_{0\leq r \leq n-1} \tau^{r} \cdot T$ is a subgroup, and hence so is $G \cap p^{*}\Char (S^{1})$. Let $\xi$ be a point in $V$. It is easy to see that $G \cap p^{*}\Char (S^{1}) = \bigcup_{0\leq r \leq n-1} \xi^{r} \cdot V$. So $G \cap p^{*}\Char (S^{1})$ is a finite group. Certainly, $V \subseteq G \cap p^{*}\Char (S^{1})$. Therefore, every element of $V$ is torsion.
\end{proof}

It remains to prove Theorem \ref{theorem_not_homotopy}.

\begin{lemma}\label{lemma_affine_subgroup}
Suppose $G$ is an affine group and $g \in G$ is not a torsion element. Then $g$ does not belong to two distinct irreducible affine subgroups with dimension $1$.
\end{lemma}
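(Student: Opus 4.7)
The plan is to argue by contradiction. Suppose $g$ lies in two distinct irreducible $1$-dimensional algebraic subgroups $H_{1}, H_{2} \subseteq G$, and set $H = H_{1} \cap H_{2}$, which is a closed algebraic subgroup of both.

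The key step is to show $\dim H = 0$. Since $H \subseteq H_{1}$ is a closed subvariety and $\dim H_{1} = 1$, every irreducible component of $H$ has dimension at most $1$. If some component $C$ had dimension $1$, then by irreducibility of $H_{1}$ we would have $C = H_{1}$, hence $H = H_{1}$, hence $H_{1} \subseteq H_{2}$; since $H_{2}$ is also irreducible of dimension $1$, this would force $H_{1} = H_{2}$, contradicting distinctness. Therefore every irreducible component of $H$ has dimension $0$, so $H$ is finite.

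A finite algebraic subgroup consists entirely of elements of finite order. Since $g \in H_{1} \cap H_{2} = H$, the element $g$ is torsion, contradicting the hypothesis. The only mild subtlety is the first step, which rests on the basic fact that an irreducible variety of dimension $1$ has no proper closed subvariety of dimension $1$; no structure theory of algebraic groups (e.g.\ the classification of $1$-dimensional connected affine algebraic groups as $\mathbb{G}_{a}$ or $\mathbb{G}_{m}$) is required.
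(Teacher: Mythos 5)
Your proof is correct and follows essentially the same route as the paper's: intersect the two subgroups, observe that distinctness and irreducibility force the intersection to be zero-dimensional and hence a finite subgroup, and conclude that $g$ would be torsion. The only difference is that you spell out the dimension count in more detail, which the paper leaves implicit.
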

\begin{proof}
Assume $g$ belongs to two such subgroups $G_{1}$ and $G_{2}$. Then $G_{1} \cap G_{2}$ is an algebraic subset of $G_{1}$. Since $G_{1}$ and $G_{2}$ are irreducible, $G_{1} \neq G_{2}$ and $\dim (G_{1}) = \dim (G_{2}) =1$, we infer $\dim (G_{1} \cap G_{2}) =0$. Therefore, $G_{1} \cap G_{2}$ is a finite subgroup. Thus, $g \in G_{1} \cap G_{2}$ is a torsion element, which yields a contraction.
\end{proof}

\begin{lemma}\label{lemma_jump_loci_M(A)}
The manifold $M(A)$ in Theorem \ref{theorem_complex-symplectic} satisfies the following properties.
\begin{enumerate}
\item $\Char (M(A)) \cong \mathbb{C}^{*} \times \mathbb{C}^{*}$.
\item $\bigcup_{j \geq 0} \Sigma^{j}_{1} (M(A)) \subset \{ 1 \} \times \mathbb{C}^{*}$ under the above isomorphism.
\item Suppose $|\rho|\neq 1$. Then $(1, \rho) \in \bigcup_{j \geq 0} \Sigma^{j}_{1} (M(A))$ if and only if $\rho = |\lambda|^{2}$ or $\rho = |\lambda|^{-2}$, where $\lambda$ is an eigenvalue of $A$.
\end{enumerate}
\end{lemma}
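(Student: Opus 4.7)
The plan is to exploit the product structure $M(A) = S^{1} \times N$ from Section~\ref{section_main} (where $N$ is the $K$-bundle over $S^{1}$ with monodromy $A_{K}$), apply the K\"unneth formula with twisted coefficients, and then reduce to Proposition~\ref{proposition_jump_equal} applied to $p\colon N \to S^{1}$ together with Lemma~\ref{lemma_gA_eigenvalue}.

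For (1), Proposition~\ref{proposition_complex_Betti} gives $\pi_{1}(M(A)) \cong \mathbb{Z} \oplus \mathbb{Z}$, so $\Char(M(A)) = \homo(\mathbb{Z}^{2}, \mathbb{C}^{*}) \cong (\mathbb{C}^{*})^{2}$. I would fix the isomorphism coming from the product decomposition $M(A) = S^{1} \times N$: the first factor is $\Char(S^{1})$ and the second is $\Char(N)$. The second factor is also $\mathbb{C}^{*}$ because the homotopy long exact sequence for $K \hookrightarrow N \to S^{1}$, combined with the simple-connectedness of $K$ (Proposition~\ref{proposition_kummer_1-connected}), shows $p_{*}\colon \pi_{1}(N) \to \pi_{1}(S^{1})$ is an isomorphism.

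For (2), I would invoke the K\"unneth formula for twisted cohomology on $S^{1} \times N$. If a character $\rho$ corresponds to $(a, b) \in \Char(S^{1}) \times \Char(N)$, then the local system $L_{\rho}$ is the external tensor product $L_{a} \boxtimes L_{b}$, so
\[
H^{j}(M(A); L_{\rho}) \cong \bigoplus_{p+q=j} H^{p}(S^{1}; L_{a}) \otimes H^{q}(N; L_{b}).
\]
When $a \neq 1$, the nontrivial monodromy makes both $H^{0}(S^{1}; L_{a})$ and $H^{1}(S^{1}; L_{a})$ vanish, so all $H^{j}(M(A); L_{\rho})$ vanish and $(a, b) \notin \bigcup_{j} \Sigma^{j}_{1}(M(A))$. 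This yields the inclusion $\bigcup_{j} \Sigma^{j}_{1}(M(A)) \subset \{1\} \times \mathbb{C}^{*}$.

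For (3), specializing K\"unneth to $a=1$ gives $(1, \rho) \in \bigcup_{j}\Sigma^{j}_{1}(M(A))$ if and only if $\rho \in \bigcup_{q} \Sigma^{q}_{1}(N)$. Since $p^{*}\colon \Char(S^{1}) \to \Char(N)$ is an isomorphism by step~(1), Proposition~\ref{proposition_jump_equal} applied to $p\colon N \to S^{1}$ gives
\[
\bigcup_{q} \Sigma^{q}_{1}(N) = p^{*}\Big( \bigcup_{q} \Gamma_{q}^{-1} \Big),
\]
where $\Gamma_{q}$ is the eigenvalue set of $A_{K}^{*}$ on $H^{q}(K; \mathbb{C})$. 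By the simple-connectedness of $K$ and Poincar\'e duality, $\Gamma_{1} = \Gamma_{3} = \emptyset$ and $\Gamma_{0} = \Gamma_{4} = \{1\}$. By Lemma~\ref{lemma_gA_eigenvalue}, $\Gamma_{2}$ consists of $|\lambda|^{\pm 2}$ together with numbers on the unit circle. The inverse operation preserves this description, so imposing $|\rho| \neq 1$ leaves exactly $\rho = |\lambda|^{\pm 2}$, as claimed. The only non-mechanical ingredient is the K\"unneth formula for external tensor products of rank one local systems; everything else is a direct assembly of Proposition~\ref{proposition_jump_equal} and Lemma~\ref{lemma_gA_eigenvalue}.
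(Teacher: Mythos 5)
Your proposal is correct and follows essentially the same route as the paper: decompose $M(A)=S^{1}\times N$, use the twisted K\"unneth formula to reduce to $\bigcup_{q}\Sigma^{q}_{1}(N)$, and then apply Proposition \ref{proposition_jump_equal} to $N\to S^{1}$ together with Lemmas \ref{lemma_kummer_odd_homology} and \ref{lemma_gA_eigenvalue}. The only cosmetic difference is that you spell out the vanishing of the odd-degree $\Gamma_{q}$ and the invariance of the eigenvalue set under inversion, which the paper leaves implicit.
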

\begin{proof}
By the assumption on $A$ in Theorem \ref{theorem_complex-symplectic}, $A$ has eigenvalues $\lambda$ and $\lambda^{-1}$ with $|\lambda|>1$.

According to (\ref{proposition_M(A)_1}), $M(A) = S^{1} \times N$, where $N$ is a fiber bundle over $S^{1}$ with fiber $K$, the Kummer surface. Denote by $p: M(A) \rightarrow S^{1} \times S^{1}$ the bundle projection. Since $K$ is simply connected (Proposition \ref{proposition_kummer_1-connected}), $p_{*}: \pi_{1} (M(A)) \rightarrow \pi_{1} (S^{1} \times S^{1})$ is an isomorphism (Proposition \ref{proposition_complex_Betti}), and hence
\[
\Char (M(A)) = \Char (S^{1} \times N) = p^{*} \Char (S^{1} \times S^{1}) \cong \mathbb{C}^{*} \times \mathbb{C}^{*}.
\]

By the fact that
\[
H^{0} (S^{1}; L_{\rho}) \cong H^{1} (S^{1}; L_{\rho}) \cong
\begin{cases}
\mathbb{C} & (\rho = 1) \\
0 & (\rho \neq 1)
\end{cases},
\]
and by the K\"{u}nneth formula, we infer that
\[
\bigcup_{j \geq 0} \Sigma^{j}_{1} (M(A)) = \left\{ (1, \rho) \middle| \rho \in \bigcup_{j \geq 0} \Sigma^{j}_{1} (N) \right\}.
\]

By (\ref{proposition_M(A)_1}), $A_{K}: K \rightarrow K$ in Proposition \ref{proposition_biholomorphic} is equal to the monodromy map of the bundle $N \rightarrow S^{1}$.
By Proposition \ref{proposition_jump_equal}, $\rho \in \bigcup_{j \geq 0} \Sigma^{j}_{1} (N)$ if and only if $\rho^{-1}$ is an eigenvalue of $A_{K}^{*}$ on $H^{*}(K; \mathbb{C})$. The conclusion now  follows immediately from Lemma \ref{lemma_kummer_odd_homology} and Lemma \ref{lemma_gA_eigenvalue}.
\end{proof}

\begin{proof}[Proof of Theorem \ref{theorem_not_homotopy}]
By assumption, $A_{i}$ has eigenvalues $\lambda_{i}$ and $\lambda_{i}^{-1}$ with $|\lambda_{i}| > 1$, where $i=1,2$. Since $A_{1}$ and $A_{2}$ have different spectral radii, $|\lambda_{1}| \neq |\lambda_{2}|$.

By Lemma \ref{lemma_jump_loci_M(A)}, $\Char (M(A_{i})) \cong \mathbb{C}^{*} \times \mathbb{C}^{*}$. In addition, $\bigcup_{j \geq 0} \Sigma^{j}_{1} (M(A_{i}))$ is contained in $\{ 1 \} \times \mathbb{C}^{*}$, and $(1, \rho) \in \bigcup_{j \geq 0} \Sigma^{j}_{1} (M(A_{i}))$ with $|\rho| \neq 1$ if and only if $\rho = |\lambda_{i}|^{2}$ or $\rho = |\lambda_{i}|^{-2}$.

We prove this theorem by contradiction. Assume $M(A_{1})$ and $M(A_{2})$ are of the same homotopy type. Since cohomology jump loci are homotopy invariants, we know that there is an isomorphism $\varphi: \mathbb{C}^{*} \times \mathbb{C}^{*} \rightarrow \mathbb{C}^{*} \times \mathbb{C}^{*}$ of affine groups such that
\[
\varphi \left( \bigcup_{j \geq 0} \Sigma^{j}_{1} (M(A_{1})) \right) = \bigcup_{j \geq 0} \Sigma^{j}_{1} (M(A_{2})).
\]

Therefore, by Lemma \ref{lemma_jump_loci_M(A)} (2),
\[
\bigcup_{j \geq 0} \Sigma^{j}_{1} (M(A_{2})) \subseteq \left( \{ 1 \} \times \mathbb{C}^{*} \right) \cap \varphi \left( \{ 1 \} \times \mathbb{C}^{*} \right).
\]
As $\bigcup_{j \geq 0} \Sigma^{j}_{1} (M(A_{2}))$ contains a non-torsion element $(1, |\lambda_{2}|^{2})$, by Lemma \ref{lemma_affine_subgroup},
\[
\varphi \left( \{ 1 \} \times \mathbb{C}^{*} \right) = \{ 1 \} \times \mathbb{C}^{*}.
\]
Now, $\varphi|_{\{ 1 \} \times \mathbb{C}^{*}}: \{ 1 \} \times \mathbb{C}^{*} \rightarrow \{ 1 \} \times \mathbb{C}^{*}$ is an isomorphism of linear groups. Thus, $\varphi (1, |\lambda_{1}|^{2}) = (1, |\lambda_{1}|^{2})$ or $\varphi (1, |\lambda_{1}|^{2}) = (1, |\lambda_{1}|^{-2})$.

Since $\varphi (1, |\lambda_{1}|^{2}) \in \bigcup_{j \geq 0} \Sigma^{j}_{1} (M(A_{2}))$ and $|\lambda_{1}|^{\pm 2} \neq 1$, by the above arguments, we have $|\lambda_{1}|^{2} = |\lambda_{2}|^{2}$ or $|\lambda_{1}|^{-2} = |\lambda_{2}|^{2}$. This contradicts the fact that $|\lambda_{1}| \neq |\lambda_{2}|$, $|\lambda_{1}| > 1$ and $|\lambda_{2}| > 1$.
\end{proof}

Motivated by the work of Voisin \cite{voisin} and Papadima-Suciu \cite{papadima_suciu}, we would like to end our paper with the following question.
\begin{question}
Does there exist a compact 6-manifold, which is of the same real homotopy type as a compact K\"ahler manifold and satisfies all the conclusions in Theorem \ref{theorem_complex-symplectic}?
\end{question}



\begin{thebibliography}{1234}
\bibitem{akhmedov} A. Akhmedov, Symplectic Calabi-Yau $6$-manifolds, Adv. Math. 262 (2014), 115-125.

\bibitem{A_B_C_K_T} J. Amor\'{o}s, M. Burger, K. Corlette, D. Kotschick, and D. Toledo, Fundamental Groups of Compact K\"{a}hler Manifolds, Mathematical Surveys and Monographs, 44, American Mathematical Society, Providence, RI, 1996.

\bibitem{arapura} D. Arapura, Geometry of cohomology support loci for local systems. I, J. Algebraic Geom. 6 (1997), no. 3, 563-597.

\bibitem{babenko_taimanov} I. Babenko and I. Ta\u{i}manov, On nonformal simply connected symplectic manifolds, Siberian Math. J. 41 (2000), no. 2, 204-217.

\bibitem{baldridge_kirk} S. Baldridge and P. Kirk, Coisotropic Luttinger surgery and some new examples of symplectic Calabi-Yau 6-manifolds, Indiana Univ. Math. J. 62 (2013), no. 5, 1457-1471.

\bibitem{Barth_Peters_Van de Ven}W. Barth, C. Peters, and A. Van de Ven, Compact complex surfaces, Ergebnisse der Mathematik und ihrer Grenzgebiete (3), Springer-Verlag, 1984.

\bibitem{bogomolov} F. Bogomolov, On Guan's examples of simply connected non-K\"{a}hler compact complex manifolds, Amer. J. Math. 118 (1996), no. 5, 1037-1046.

\bibitem{Bott_Tu}R. Bott and L. Tu, Differential forms in algebraic topology, Graduate Texts in Mathematics 82, Springer-Verlag, 1982.

\bibitem{burger} M. Burger, Fundamental groups of K\"{a}hler manifolds and geometric group theory, S\'{e}minaire Bourbaki, Vol. 2009/2010, Expos\'{e}s 1012-1026.

\bibitem{cordero_fernandez_gray} L. Cordero, M. Fern\'{a}ndez and A. Gray, Symplectic manifolds with no K\"{a}hler structure, Topology 25 (1986), no. 3, 375-380.

\bibitem{D_G_M_S} P. Deligne, P. Griffiths, J. Morgan, and D. Sullivan, Real homotopy theory of K\"{a}hler manifolds, Invent. Math. 29 (1975), no. 3, 245-274.

\bibitem{dimca}A. Dimca, Sheaves in Topology, Universitext, Springer-Verlag, Berlin, 2004.

\bibitem{fernandez_leon_saralegui} M. Fern\'{a}ndez, M. de Le\'{o}n and M. Saralegui, A six-dimensional compact symplectic solvmanifold without K\"{a}hler structures, Osaka J. Math. 33 (1996), no. 1, 19-35.

\bibitem{Fernandez_Mumoz} M. Fern\'{a}ndez and V. Mu\~{n}oz, Formality of Donaldson submanifolds, Math. Z. 250 (2005), no. 1, 149-175.

\bibitem{Fernandez_Mumoz2} M. Fern\'{a}ndez and V. Mu\~{n}oz, An 8-dimensional nonformal, simply connected, symplectic manifold, Ann. of Math. 167 (2008), no. 3, 1045-1054.

\bibitem{fine_panov1} J. Fine and D. Panov, Symplectic Calabi-Yau manifolds, minimal surfaces and the hyperbolic geometry of the conifold, J. Differential Geom. 82 (2009), no. 1, 155-205.

\bibitem{fine_panov2} J. Fine and D. Panov, Hyperbolic geometry and non-K\"{a}hler manifolds with trivial canonical bundle, Geom. Topol. 14 (2010), no. 3, 1723-1763.

\bibitem{friedman} R. Friedman, On threefolds with trivial canonical bundle, Complex Geometry and Lie Theory (Sundance, UT, 1989), 103-134, Proc. Sympos. Pure Math. 53, Amer. Math. Soc., Providence, RI, 1991.

\bibitem{goldstein_prokushkin} E. Goldstein and S. Prokushkin, Geometric model for complex non-K\"{a}hler manifolds with SU(3) structure, Comm. Math. Phys. 251 (2004), no. 1, 65-78.

\bibitem{gompf} R. Gompf, A new construction of symplectic manifolds, Ann. of Math. 142 (1995), no. 3, 527-595.

\bibitem{grantcharov} G. Grantcharov, Geometry of compact complex homogeneous spaces with vanishing first Chern class, Adv. Math. 226 (2011), no. 4, 3136-3159.

\bibitem{griffiths_harris}P. Griffiths and J. Harris, Principles of Algebraic Geometry, John Wiley \& Sons, Inc., 1994.

\bibitem{guan1} D. Guan, Examples of compact holomorphic symplectic manifolds which are not K\"{a}hlerian. II, Invent. Math. 121 (1995), no. 1, 135-145.

\bibitem{guan2} D. Guan, Examples of compact holomorphic symplectic manifolds which are not K\"{a}hlerian. III, Internat. J. Math. 6 (1995), no. 5, 709-718.

\bibitem{gutowski_ivanov_papadopoulos} J. Gutowski, S. Ivanov, and G. Papadopoulos, Deformations of generalized calibrations and compact non-K\"{a}hler manifolds with vanishing first Chern class, Asian J. Math. 7 (2003), no. 1, 39-79.

\bibitem{Halperin} S. Halperin, Lectures on minimal models, M\'{e}m. Soc. Math. France (N.S.), No. 9-10 (1983).

\bibitem{lu_tian} P. Lu and G. Tian, The complex structures on connected sums of $S^{3} \times S^{3}$, Manifolds and Geometry (Pisa, 1993), 284-293, Sympos. Math., XXXVI, Cambridge Univ. Press, Cambridge, 1996.

\bibitem{magnusson} G. Magn\'{u}sson, Automorphisms and examples of compact non-K\"{a}hler manifolds, arXiv:1204.3165.

\bibitem{mcduff} D. McDuff, Examples of simply-connected symplectic non-K\"{a}hlerian manifolds, J. Differential Geom. 20 (1984), no. 1, 267-277.

\bibitem{milnor_stasheff}J. Milnor and J. Stasheff, Characteristic Classes, Princeton University Press, 1974.

\bibitem{morrow_kodaira}J. Morrow and K. Kodaira, Complex Manifolds, Reprint of the 1971 edition with errata, AMS Chelsea Publishing, Providence, RI, 2006.

\bibitem{papadima_suciu}S. Papadima, A. Suciu, Geometric and algebraic aspects of 1-formality, Bull. Math. Soc. Sci. Math. Roumanie (N.S.) 52(100) (2009), no. 3, 355-375.

\bibitem{ps} S. Papadima, A. Suciu, Bieri-Neumann-Strebel-Renz invariants and homology jumping loci, Proc. London Math. Soc. 100 (2010), no. 3, 795-834.

\bibitem{park} J. Park, Non-complex symplectic $4$-manifolds with $b^{+}_{2}=1$, Bull. London Math. Soc. 36 (2004), no. 2, 231-240.

\bibitem{smith_thomas_yau} I. Smith, R. Thomas, and S. Yau, Symplectic conifold transitions, J. Differential Geom. 62 (2002), no. 2, 209-242.

\bibitem{thurston} W. Thurston, Some simple examples of symplectic manifolds, Proc. Amer. Math. Soc. 55 (1976), no. 2, 467-468.

\bibitem{torres_yazinski} R. Torres and J. Yazinski, Geography of symplectic $4$- and $6$-manifolds, Topology Proc. 46 (2015), 87-115.

\bibitem{tralle_oprea} A. Tralle and J. Oprea, Symplectic Manifolds with no K\"{a}hler Structure, Lecture Notes in Mathematics, 1661, Springer-Verlag, Berlin, 1997.

\bibitem{tseng_yau} L. Tseng and S. Yau, Non-K\"{a}hler Calabi-Yau manifolds, String-Math 2011, 241-254, Proc. Sympos. Pure Math. 85, Amer. Math. Soc., Providence, RI, 2012.

\bibitem{voisin} C. Voisin, Hodge structures on cohomology algebras and geometry, Math. Ann. 341 (2008), no. 1, 39-69.

\bibitem{wang} B. Wang,  Torsion points on the cohomology jump loci of compact K\"{a}hler manifolds, Math. Res. Lett. 23 (2016), no. 2, 545-563.

\bibitem{G.Whitehead}G. Whitehead, Elements of homotopy theory, Graduate Texts in Mathematics 61, Springer-Verlag, 1978.

\end{thebibliography}
\end{document}